\newtheorem{theorem}{Theorem}[section]
\newtheorem{lemma}[theorem]{Lemma}
\newtheorem{proposition}[theorem]{Proposition}
\newtheorem{corollary}[theorem]{Corollary}
\theoremstyle{definition}
\newtheorem{definition}[theorem]{Definition}
\newtheorem{remark}[theorem]{Remark}
\numberwithin{equation}{section}
\thanks{The first and second authors have been supported by DGI (Spain) Project MTM2013-46961-P}
\author{P.~M.~Gadea}
\address{
Instituto de F\'\i sica Fundamental\\
CSIC\\
Serrano 113-bis, 28006-Ma\-drid, Spain}
\email{p.m.gadea@csic.es}
\author{J.~C.~Gonz{\'a}lez-D{\'a}vila}
\address{
Departamento de Matem\'aticas, Estad\'\i stica e Investigaci\'on Operativa\\
Universidad de La Laguna\\ 38200-La Laguna, Tenerife, Spain}
\email{jcgonza@ull.es}
\author{J.~A.~Oubi\~na}
\address{
Departamento de Xeometr\'{\i}a e Topolox\'\i a,
Facultade de Matem\'a\-ticas,
Universidade de Santiago de Compos\-tela,
15782-Santiago de Compostela, Spain}
\email{ja.oubina@usc.es}
\keywords{Cyclic homogeneous Riemannian manifolds, cyclic metric Lie groups, homogeneous structures, Riemannian $3$-symmetric spaces}
\subjclass{53C30, 53C35, 53C15.}
\begin{document}

\title[Cyclic homogeneous Riemannian manifolds]{Cyclic homogeneous Riemannian manifolds}

\begin{abstract}
In spin geometry, traceless cyclic homogeneous Riemannian manifolds equipped with a homogeneous spin structure can be viewed as the simplest manifolds after Riemannian symmetric spin spaces. In this paper, we give some characterizations and properties of cyclic and traceless cyclic homogeneous Riemannian manifolds and we obtain the classification of simply-connected cyclic homogeneous Riemannian manifolds of dimension less than or equal to four. We also present a wide list of examples  of non-compact irreducible Riemannian $3$-symmetric  spaces admitting cyclic metrics and give the expression of these metrics.

\end{abstract}
\maketitle

\section{Introduction}
\label{intro}

In a recent paper \cite{GadGonOub2}, the present authors have proved that a homogeneous spin Riemannian manifold has Dirac operator like that on a Riemannian symmetric spin space if and only if it is traceless cyclic. Therefore, traceless cyclic homogeneous  spin Riemannian manifolds can be considered in spin geometry as the simplest manifolds after Riemannian symmetric spin spaces. 

We recall that a homogeneous Riemannian manifold $(M,g)$ is said to be {\it cyclic\/} if there exists a quotient expression $M = G/K$
and a reductive decomposition $\mathfrak{g} = \mathfrak{k} \oplus  \mathfrak{m}$ satisfying
\begin{equation}
\label{onetwo}
\mathop{\text{\LARGE$\mathfrak S$}\vrule width 0pt depth 2pt}_{XYZ}\,\langle [X,Y]_\mathfrak{m} ,Z\rangle = 0,\qquad X,Y,Z \in \mathfrak{m},
\end{equation}
where $\langle \cdot,\cdot \rangle$ denotes the $\mathrm{Ad}(K)$-invariant inner product on $\mathfrak{m}$ induced by $g.$ If moreover $G$ is unimodular, $(M,g)$ is said to be {\it traceless cyclic}. 

In terms of homogeneous structures, the condition \eqref{onetwo} means that the homogeneous structure $S$
defined by $\mathfrak{g} = \mathfrak{k} \oplus  \mathfrak{m}$ is of type $\mathcal{S}_1 \oplus \mathcal{S}_2$
in the Tricerri and Vanhecke classification \cite{TriVan} of geometric types, which has three basic types, $\mathcal{S}_1$, $\mathcal{S}_2$, $\mathcal{S}_3$; and the condition of $(M,g)$ being traceless cyclic (resp., naturally reductive) means that $S$ is of type $\mathcal{S}_2$ (resp., $\mathcal{S}_{3}).$

The study of cyclic homogeneous Riemannian manifolds was
started by Tricerri and Vanhecke \cite{TriVan} and continued by Kowalski and
Tricerri \cite{KowTri}, Pastore and Verroca \cite{PasVer}, Bieszk \cite{Bie}, Falcitelli and Pastore \cite{FalPas}, de Leo and Marinosci \cite{LeoMar}, de Leo \cite{Leo} and the present authors in
 \cite{GadOub},
where we named them ``cotorsionless manifolds.'' On the other hand, we have studied in \cite{GadGonOub1} {\it cyclic metric} Lie groups, that is, Lie groups endowed with cyclic left-invariant metrics.

The aim of this present paper is three-fold: (i) to give some properties and  characterizations of cyclic homogeneous Riemannian manifolds; (ii)
to classify the simply-connected cyclic homogeneous Riemannian manifolds of dimension less than or equal to four; and (iii)
to give a large list of examples of cyclic metrics for the class of non-compact irreducible $3$-symmetric spaces.

The paper is organised as follows. In Section \ref{sectwo}, some preliminaries on the torsion tensor of canonical connections and on
homogeneous structures are given.

In Section \ref{secthr} we study some properties associated to the
canonical trace form $\eta^c$ (of the torsion tensor $T^c$ of a canonical
connection $\nabla^c$) on a homogeneous
manifold $G/K.$ We prove (Theorem \ref{pclosed}) that $\eta^{c}$ is closed. This partially answers a question discussed by Pastore and Verroca in \cite{PasVer}. Moreover, when $G$ is not unimodular,
the distribution $D,$ given by $\eta^{c} = 0,$ is integrable and the leaves of the corresponding
$G$-invariant foliation ${\mathcal F}_D$ are the fibres of a codimension-one homogeneous fibration.

In Section \ref{secfou} we define several types of homogeneous Riemannian manifolds: vectorial, cyclic, traceless, traceless cyclic, and totally skew-symmetric, and in Section \ref{secfiv}, we give some results related to the canonical foliation ${\mathcal F}_D.$ Among them, we prove (Theorem \ref{nt2t3}) that if $G$ is simply-connected and nonunimodular and $K$ is connected, the homogeneous Riemannian manifold $(M = G/K,g)$ is isometric
to a semidirect Riemannian product $\mathbb{R} \ltimes_{\pi}(L/K)$, where $L$ is a simply-connected unimodular Lie group and $\mathrm{tr}\, \pi_{*}(\mathrm{d}/\mathrm{d} t)\neq 0$.
This result underlines that, in the general study of homogeneous Riemannian manifolds,
the class of traceless homogeneous Riemannian manifolds plays a key role.
We further prove that the projection ${\rm j}\colon\mathbb{R}\ltimes_{\pi}(L/K)\to \mathbb{R}$ is an isoparametric function whose level hypersurfaces are the leaves of the canonical foliation.

In Section \ref{secsix}, we give explicit expressions for the curvature and Ricci tensors of cyclic homogeneous Riemannian manifolds.
From them it follows, among other results, that stric\-t\-ly negative sectional curvatures exist on a cyclic homogeneous
Riemannian manifolds with nonnull canonical trace form and that there are  not nonabelian Einstein unimodular cyclic metric Lie groups.

The classification of simply-connected traceless cyclic homogeneous Riemannian
manifolds of dimension $\leqslant 4$ was given by Kowalski and Tricerri in \cite{KowTri}.
In Section \ref{secsev} we extend this classification, adding the manifolds
corresponding to a nonunimodular Lie group $G$ (Theorems \ref{clastheothre} and \ref{clastheofour}). The first examples
which are not cyclic metric Lie groups do appear in dimension four.

In Section \ref{seceig} cyclic homogeneous Riemannian manifolds $G/K$with simple spectrum and $G$ semisimple, are studied. Finally, in Section \ref{secnin} we consider some examples of higher dimension, giving (Theorems \ref{thA3II} and \ref{thA3III}) a large list of examples of cyclic metrics on the class of noncompact irreducible $3$-symmetric spaces of type $A_3II$ or $A_3III$. Note that their canonical almost Hermitian structure is almost K\"ahler, whereas their compact duals are naturally reductive manifolds whose canonical almost Hermitian structure is nearly K\"ahler.

\section{Preliminaries}
\label{sectwo}

Let $(M,g)$ be an $n$-dimensional connected Riemannian manifold. Let $\nabla^{g}$ denote its Levi-Civita connection and let $R^{g}$ be the  Riemannian curvature tensor, given by $R^{g}_{XY} = \nabla^{g}_{[X,Y]} - [\nabla^{g}_{X},\nabla^{g}_{Y}]$, for all vector fields $X,Y$ on $M$. Any metric connection $\nabla$ on $(M,g)$ can be written as $\nabla_{X}Y = \nabla^{g}_{X}Y + S_{X}Y,$ $S$ being a tensor field of type $(1,2)$ which satisfies $S_{XYZ} = - S_{XZY}$, where $S_{XYZ} = g(S_{X}Y,Z)$. Then the torsion tensor $T$ of $\nabla$, viewed as a $(0,3)$-tensor field, is given by $T_{XYZ} = S_{XYZ} - S_{YXZ}$ and, conversely, $S$ is expressed in terms of $T$ as $2 S_{XYZ} = T_{XYZ} + T_{ZYX} + T_{ZXY}$. Hence, the $n^{2}(n-1)/2$-dimensional space of all possible metric connections
${\mathcal S} = \{S\in \otimes^{3} TM \, : \, S_{XYZ} = -S_{XZY}\}$ and the space of all possible torsion tensors
${\mathcal T} = \{T\in \otimes^{3} TM \, : \, T_{XYZ} = -T_{YXZ}\}$ are isomorphic $O(n)$-modules. 

Denote by $\eta$ the {\em trace form} of $T$, i.e., $\eta(X) = \mathrm{tr}\, T_{X}$, for any vector field $X$. Then, for $X\in T_pM$, $p\in M$, we have $\eta(X) = c_{12}(S)(X)$, where $(c_{12})_p(S)(X) = \sum_{i}S_{e_{i}e_{i}X}$, for an arbitrary orthonormal basis $\{e_{i}\}$ of $T_pM$.

For $n\geqslant 3$, ${\mathcal S}$ and ${\mathcal T}$ split into three irreducible $O(n)$-modules ${\mathcal S}_{i}$ and ${\mathcal T}_{i}$, $i = 1,2,3$, respectively (see \cite{Car} and \cite{TriVan}). A tensor field $S$ in ${\mathcal S}$ is then of type ${\mathcal S}_{1}$ if there exists a one-form $\varphi$ on $M$ such that $S_{XYZ} = g(X,Y)\varphi(Z) - g(X,Z)\varphi(Y)$. The tensor field $S$ is of type ${\mathcal S}_2$ if the cyclic sum
$\mathfrak{S}_{XYZ} S_{XYZ}$ vanishes and $c_{12}(S)(X) = 0$, for any vector field $X$; and it is of type ${\mathcal S}_3$ if $S_{XYZ} = -S_{YXZ}$.

The torsion $T$ (and the tensor field $S)$ is said to be
\begin{enumerate}
\item[$\bullet$] {\em vectorial} if there exists a one-form $\varphi$ on $M$ such that $T_{X}Y = \varphi(X)Y - \varphi(Y)X$
or, equivalently, if $S\in {\mathcal S}_{1}$. Then, $\eta = (n-1)\varphi;$ \smallskip
\item[$\bullet$] {\em cyclic} if $\mathop{\text{\large$\mathfrak S$}\vrule width 0pt depth 2pt}_{XYZ}T_{XYZ} = 0$ or, equivalently, if $S\in {\mathcal S}_1\oplus{\mathcal S}_2;$ \smallskip
\item[$\bullet$] {\em traceless} if $\eta = 0$ or, equivalently, if $S\in {\mathcal S}_2\oplus{\mathcal S}_3;$ \smallskip
\item[$\bullet$] {\em traceless cyclic} if $T$ is traceless and cyclic or, equivalently, if $S\in{\mathcal S}_{2};$ \smallskip
\item[$\bullet$] {\em totally skew-symmetric} if $T_{XYZ} = -T_{XZY}$ or, equivalently, if $S\in {\mathcal S}_{3}$.
\end{enumerate}
Note that the properties of being vectorial or traceless do not depend on the metric $g$. 

A tensor field $S\in {\mathcal S}$ satisfying moreover $\nabla R = \nabla S = 0$, where $R$ denotes the curvature tensor of $\nabla$, is said to be a {\em homogeneous structure}. Such a $\nabla$ is called an {\em Ambrose-Singer connection\/} or, simply, an AS-connection. The existence of an AS-connection implies that $(M,g)$ is locally homogeneous \cite[Theorem 1.10]{TriVan}. If in addition $(M,g)$ is complete, then it is locally isometric to a homogeneous Riemannian manifold. Any homogeneous Riemannian manifold $(M=G/K,g)$ admits an AS-connection, namely, the {\em canonical connection} defined by an adapted reductive decomposition (see \cite[Theorem 1.12]{TriVan}). Moreover, a connected, simply-connected and complete Riemannian manifold endowed with an AS-connection $\nabla$ is a homogeneous Riemannian manifold (see \cite[Theorem 1.11]{TriVan}).

\section{Canonical trace forms}
\label{secthr}

A connected homogeneous manifold $M$ can be described as a quotient manifold $G/K$, where $G$ is a
Lie group, which is supposed to be connected, acting transitively and effectively
on $M$ and $K$ is the isotropy subgroup of $G$ at
some point $o\in M$, the {\em origin of $G/K$}.  If moreover $g$ is a $G$-invariant Riemannian metric on $M = G/K$, then $(M,g)$ is said to be a {\em homogeneous Riemannian manifold}. In this case, $G$ can be considered as a closed subgroup of the full isometry group $I(M,g)$
(see \cite[Chapter 7]{Bes} for more details). This implies that $K$ and the Lie subgroup $\mathrm{Ad}(K)$ of $\mathrm{Ad}(G)$ are compact subgroups.

From now on, we suppose that $\mathrm{Ad}(K)$ is compact. Then the Lie algebra $\mathfrak{g}$ of $G$ admits an $\mathrm{Ad}(K)$-invariant (positive) inner product. Hence, $G/K$ is {\em reductive} in the sense that there is an $\mathrm{Ad}(K)$-invariant subspace $\mathfrak{m} $ of $\mathfrak{g} $ such that $\mathfrak{g}$ splits as the vector space direct sum $\mathfrak{g} = \mathfrak{k} \oplus \mathfrak{m}$,
$\mathfrak{k} $ being the Lie algebra of $K$.

Fixed a reductive decomposition $\mathfrak{g} = \mathfrak{k} \oplus \mathfrak{m}$, the corresponding {\em canonical connection} is the $G$-invariant connection $\nabla^{c}$ determined (\cite[p.\ 20]{TriVan}) by $\nabla^{c}_{X}Y^* = [X^*,Y^*]_o = -[X,Y]^*_o=-[X,Y]_{\mathfrak{m}}$, for all $X,Y\in \mathfrak{m}$, where $X^*$ is the fundamental vector field associated with $X$, that is, $X^*_{p} = (\mathrm{d}/\mathrm{d} t)_{t = 0} ((\exp\, tX)p)$, for all $p\in M$. The torsion $T^{c}$ and the curvature $R^{c}$ of $\nabla^{c}$ are given by
\begin{equation}\label{TR}
T^{c}(X,Y) = -[X,Y]_{\mathfrak{m}},\quad R^{c}(X,Y) = \mathrm{ad}_{[X,Y]_{\mathfrak{k}}}.
\end{equation}
We call $T^{c}$ and $R^{c}$ the {\em canonical torsion} and the {\em canonical curvature} defined by the reductive decomposition. Since $[\mathfrak{k},\mathfrak{m} ]\subset \mathfrak{m} $, it follows from (\ref{TR}) that the {\em canonical trace form} $\eta^{c}$, that is, the trace form of $T^{c}$, is the $G$-invariant one-form on $M$ determined by
\begin{equation}
\label{c12}
\eta^{c}(X) =  -\mathrm{tr}\,\, \mathrm{ad}_{X},\qquad X\in {\mathfrak{m}}.
\end{equation}
We can choose an $\mathrm{Ad}(K)$-invariant inner product $\langle\cdot,\cdot\rangle$ on $\mathfrak{g}$ making $\mathfrak{g} = \mathfrak{k} \oplus \mathfrak{m}$ an orthogonal decomposition. This implies that $\mathrm{tr}\, \mathrm{ad}_{W} = 0$, for all $W\in \mathfrak{k}$. Hence, we have the following result.

\begin{proposition}
\label{punimodular}
The canonical torsion on $G/K$ defined by some reductive decomposition {\rm (}and then, by anyone$)$ is traceless if and only if $G$ is unimodular. 
\end{proposition}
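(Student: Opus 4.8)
The plan is to reduce the statement to the linear identity \eqref{c12}, $\eta^{c}(X) = -\mathrm{tr}\,\mathrm{ad}_{X}$ for $X\in\mathfrak{m}$, combined with the observation recorded just above: once $\langle\cdot,\cdot\rangle$ is chosen $\mathrm{Ad}(K)$-invariant and making $\mathfrak{g} = \mathfrak{k}\oplus\mathfrak{m}$ orthogonal, every $\mathrm{ad}_{W}$ with $W\in\mathfrak{k}$ is skew-symmetric with respect to $\langle\cdot,\cdot\rangle$ (because the compact group $\mathrm{Ad}(K)$ acts by isometries of $\langle\cdot,\cdot\rangle$), hence $\mathrm{tr}\,\mathrm{ad}_{W} = 0$. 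I would also recall that a connected Lie group $G$ is unimodular precisely when $\mathrm{tr}\,\mathrm{ad}_{X} = 0$ for all $X\in\mathfrak{g}$, since the modular function $g\mapsto|\det\mathrm{Ad}(g)|$ has $X\mapsto\mathrm{tr}\,\mathrm{ad}_{X}$ as its differential at the identity.

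Granting these two facts, the proof is a short argument in each direction. If $G$ is unimodular, then $\mathrm{tr}\,\mathrm{ad}_{X} = 0$ for every $X\in\mathfrak{g}$, in particular for every $X\in\mathfrak{m}$, so $\eta^{c} = 0$ and $T^{c}$ is traceless. Conversely, if $T^{c}$ is traceless for the fixed reductive decomposition, then $\mathrm{tr}\,\mathrm{ad}_{X} = 0$ for all $X\in\mathfrak{m}$; since also $\mathrm{tr}\,\mathrm{ad}_{W} = 0$ for all $W\in\mathfrak{k}$ and the map $X\mapsto\mathrm{tr}\,\mathrm{ad}_{X}$ is linear, the splitting $\mathfrak{g} = \mathfrak{k}\oplus\mathfrak{m}$ yields $\mathrm{tr}\,\mathrm{ad}_{X} = 0$ on all of $\mathfrak{g}$, so $G$ is unimodular. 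The parenthetical remark — that tracelessness for one reductive decomposition forces it for every reductive decomposition of every presentation $M = G/K$ — then follows at once, because the equivalence just established characterises tracelessness of $T^{c}$ purely in terms of unimodularity of $G$, a condition that makes no reference to a decomposition.

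I do not anticipate a genuine obstacle: the only points needing care are the two preliminary facts above, namely that $\mathrm{ad}_{W}$ is trace-free for $W\in\mathfrak{k}$ (which rests on the standing compactness assumption on $\mathrm{Ad}(K)$ in this section) and the identification of unimodularity of the connected group $G$ with the vanishing of $\mathrm{tr}\,\mathrm{ad}$ on $\mathfrak{g}$.
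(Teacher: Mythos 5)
Your argument is correct and is essentially the paper's own: the text preceding the proposition establishes exactly your two ingredients, namely $\eta^{c}(X) = -\mathrm{tr}\,\mathrm{ad}_{X}$ on $\mathfrak{m}$ from \eqref{c12} and $\mathrm{tr}\,\mathrm{ad}_{W} = 0$ for $W\in\mathfrak{k}$ via an $\mathrm{Ad}(K)$-invariant inner product making $\mathfrak{g} = \mathfrak{k}\oplus\mathfrak{m}$ orthogonal, and the proposition is then deduced just as you do, with unimodularity of the connected group $G$ read off from the vanishing of $\mathrm{tr}\,\mathrm{ad}$ on $\mathfrak{g}$. Your additional observation that the parenthetical (independence of the reductive decomposition and of the presentation $G/K$) follows because the criterion refers only to $G$ matches the paper's intent as well.
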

\begin{remark}{\rm If $G$ is compact or semisimple, there exists an $\mathrm{Ad}(G)$-invariant scalar product on its Lie algebra. Hence, $G$ is unimodular and so the trace form on $G/K$ of any canonical torsion vanishes.}
\end{remark}

If $G$ is not unimodular, denote by $\mathfrak{l}$ the {\em unimodular kernel of\/} $\mathfrak{g}$, that is, its codimension-one ideal  $\mathfrak{l} = \{X\in \mathfrak{g} \,:\, \mathrm{tr}\, \mathrm{ad}_{X} = 0\}$. Then, the $\mathrm{Ad}(K)$-invariant subspace $D = \mathfrak{l} \cap \mathfrak{m} = \{ X \in \mathfrak{m} : \eta^{c}(X) = 0 \}$ of $\mathfrak{m}$ determines an $(n-1)$-dimensional $G$-invariant distribution on $M$ defined by $\eta^{c} = 0$.

\begin{theorem}
\label{pclosed}
The canonical trace form is a closed one-form on $M =G/K$. Moreover, when $G$ is not unimodular, the distribution $D $ is integrable and the leaves of the corresponding
$G$-invariant foliation ${\mathcal F}_D $ are the fibres of the codimension-one homogeneous fibration
\[
N = L/K \to M = G/K \stackrel{\rm j}\to G/L\colon gK\mapsto gL,
\]
where $L$ is the connected component of the identity of $\mathrm{Ker}(\mathrm{det}\, \mathrm{Ad})$.
\end{theorem}
\begin{proof} For each $X\in \mathfrak{m}$, let $X^{+}$ be the $G$-invariant vector field defined in a neighborhood of $o$ in
$M = G/K$ such that $X^{+}_{o} = X$. Then we have \cite[(7.3)]{Nom} that $[X^{+},Y^{+}]_{o} = [X,Y]_{\mathfrak{m}}$, for all $X,Y\in \mathfrak{m}$. Hence,
\begin{equation}\label{differential}
(\mathrm{d}\eta^{c})_{o}(X^{+}_{o},Y^{+}_{o}) = -\eta^{c}([X,Y]_{\mathfrak{m}}).
\end{equation}
From Proposition \ref{punimodular}, we can assume that $G$ is not unimodular. Then, since $\mathfrak{l}$ is an ideal, one gets $[\mathfrak{m} ,\mathfrak{m}]\subset \mathfrak{l}$ and so, $\eta^{c}([\mathfrak{m} ,\mathfrak{m}]_{\mathfrak{m}}) = 0$. From (\ref{differential}), this implies that $\eta^{c}$ must be closed.

Because $[D,D]_{\mathfrak{m}}\subset D$, it follows that $D$ is integrable. Since $L$ is a closed subgroup of $G$ with Lie algebra $\mathfrak{l}$ and $\mathfrak{k}$ is in $\mathfrak{l}$, the quotient $L/K$ is a homogeneous manifold. Using that $[\mathfrak{k},\mathfrak{m}]\subset \mathfrak{m}$ and, again, that $\mathfrak{l}$ is an ideal, one gets that $[\mathfrak{k},D]\subset D$. Hence, $\mathfrak{l} = \mathfrak{k}\oplus D $ is a reductive decomposition adapted to $N = L/K$. This implies that $N$ is an integral submanifold of $D$ through the origin. For another point $gK\in G/K$, we use that ${\rm j}^{-1}(aL)= a\cdot L/K$, for each $a\in G$.
\end{proof}
\begin{definition} The foliation ${\mathcal F}_D$ is called the {\it canonical foliation\/} defined by the reductive decomposition $\mathfrak{g} = \mathfrak{k} \oplus \mathfrak{m}$.
\end{definition}

In the sequel, we shall use the following result.
\begin{lemma}
\label{lm1m2}
Let $M_{1} = G_{1}/K_{1}$ and $M_{2}= G_{2}/K_{2}$ be two homogeneous Riemannian manifolds and  consider a homomorphism
$\pi \colon G_{1}\to \mathrm{Aut}(G_{2})$, satisfying $\pi(g_{1})(K_{2})\subset K_{2}$, for all $g_{1}\in G_{1}$. Then the mapping
\[
\phi \colon (G_{1}\ltimes_{\pi}G_{2})/(K_{1}\ltimes_{\pi_{K_{1}}}\!K_{2})\to M_{1}\times M_{2},
\]
given by $\phi \big((g_{1},g_{2})(K_{1}\ltimes_{\pi_{K_{1}}}\!K_{2})\big) = (g_{1}K_{1},g_{2}K_{2})$, is a diffeomorphism,
$\pi_{K_{1}}$ being the homomorphism $\pi_{K_{1}} \colon K_{1}\to \mathrm{Aut}(K_{2})$, restriction of $\pi$ to $K_{1}$.
\end{lemma}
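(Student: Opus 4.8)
The plan is to realise $\phi$ as the map induced on the quotient by the tautological submersion $\Phi\colon G_{1}\ltimes_{\pi}G_{2}\to M_{1}\times M_{2}$, $(g_{1},g_{2})\mapsto(g_{1}K_{1},g_{2}K_{2})$, and then to identify its fibres. First I would extract the two consequences of the standing hypothesis $\pi(g_{1})(K_{2})\subset K_{2}$ for all $g_{1}\in G_{1}$. Applying it to $g_{1}$ and to $g_{1}^{-1}$, and using $\pi(g_{1}^{-1})=\pi(g_{1})^{-1}$, one gets $\pi(g_{1})(K_{2})=K_{2}$, so $\pi(g_{1})$ restricts to a \emph{bijection} of $K_{2}$ onto itself; in particular, for $g_{1}\in K_{1}$ it restricts to an automorphism of $K_{2}$, so $\pi_{K_{1}}\colon K_{1}\to\mathrm{Aut}(K_{2})$ is well defined and $H:=K_{1}\ltimes_{\pi_{K_{1}}}\!K_{2}$ is a closed Lie subgroup of $G_{1}\ltimes_{\pi}G_{2}$ whose underlying set is $K_{1}\times K_{2}$.

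Next I would note that, as a map of underlying manifolds, $\Phi=p_{1}\times p_{2}$, where $p_{i}\colon G_{i}\to M_{i}$ is the canonical projection; hence $\Phi$ is a surjective submersion with $\Phi^{-1}(g_{1}K_{1},g_{2}K_{2})=g_{1}K_{1}\times g_{2}K_{2}$. The key (short) computation is to check that this fibre is exactly the left coset $(g_{1},g_{2})H$. Writing the semidirect product multiplication as $(g_{1},g_{2})(k_{1},k_{2})=(g_{1}k_{1},\,g_{2}\,\pi(g_{1})(k_{2}))$, the coset is $\{(g_{1}k_{1},\,g_{2}\,\pi(g_{1})(k_{2})):k_{1}\in K_{1},\ k_{2}\in K_{2}\}$; as $k_{1}$ ranges over $K_{1}$ the first entry ranges over $g_{1}K_{1}$, and, since $\pi(g_{1})$ is a bijection of $K_{2}$, as $k_{2}$ ranges over $K_{2}$ the second entry ranges over $g_{2}K_{2}$. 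Thus the fibres of $\Phi$ are precisely the left cosets of $H$, which is just a restatement of the fact that $\phi$ is well defined, injective and surjective with the stated formula.

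Finally I would upgrade $\phi$ to a diffeomorphism by the routine submersion argument. Let $q\colon G_{1}\ltimes_{\pi}G_{2}\to(G_{1}\ltimes_{\pi}G_{2})/H$ be the canonical projection, so $\phi\circ q=\Phi$. Since $q$ is a surjective submersion, $\phi$ is smooth; since $\mathrm{d}\Phi=\mathrm{d}\phi\circ\mathrm{d}q$ is surjective and $\mathrm{d}q$ is surjective, $\mathrm{d}\phi$ is surjective at every point; and as both manifolds have dimension $\dim M_{1}+\dim M_{2}$, $\phi$ is a local diffeomorphism, hence a global one because it is bijective.

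The one point that genuinely needs care — and the only place a careless argument would break — is the use of the hypothesis $\pi(g_{1})(K_{2})\subset K_{2}$: it must be invoked for $g_{1}^{-1}$ as well, so that $\pi(g_{1})|_{K_{2}}$ is a bijection of $K_{2}$ rather than merely a self-injection. Precisely this turns the fibre $g_{1}K_{1}\times g_{2}K_{2}$ of $\Phi$ into the coset $(g_{1},g_{2})H$; without it $\phi$ would fail to be surjective onto the second factor of $M_{1}\times M_{2}$ fibrewise. Everything else is bookkeeping with the semidirect product law.
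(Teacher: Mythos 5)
Your proof is correct, but it takes a different route from the paper's. The paper proves the lemma by exhibiting an explicit smooth action $\theta$ of $G_{1}\ltimes_{\pi}G_{2}$ on $M_{1}\times M_{2}$, namely $\theta\big((g_{1},g_{2}),(g_{1}'K_{1},g_{2}'K_{2})\big)=(g_{1}g_{1}'K_{1},\,g_{2}\pi(g_{1})(g_{2}')K_{2})$, checking it is well defined (this is where $\pi(g_{1})(K_{2})\subset K_{2}$ enters) and transitive, computing the isotropy at $(e_{1}K_{1},e_{2}K_{2})$ to be $K_{1}\ltimes_{\pi_{K_{1}}}K_{2}$, and then invoking the standard theorem that the orbit map of a transitive smooth action induces a diffeomorphism $G/G_{x}\to M$; under that identification the induced map is exactly $\phi$. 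You instead bypass the action: you use that the underlying manifold of the semidirect product is the product $G_{1}\times G_{2}$, so the tautological map $\Phi=p_{1}\times p_{2}$ is manifestly a surjective submersion, you identify its fibres $g_{1}K_{1}\times g_{2}K_{2}$ with the left cosets $(g_{1},g_{2})H$ (this is where you need $\pi(g_{1})(K_{2})=K_{2}$, correctly obtained by applying the hypothesis to $g_{1}$ and $g_{1}^{-1}$), and conclude by the elementary submersion-plus-equal-dimension argument. What your approach buys is self-containment: you do not need the homogeneous-space theorem, only the concrete product structure; what the paper's approach buys is that it simultaneously records the transitive $G_{1}\ltimes_{\pi}G_{2}$-action on $M_{1}\times M_{2}$, which is exactly what is reused later (e.g.\ in the definition of the semidirect Riemannian product $g_{\pi}$), and its injectivity step is subsumed in the general orbit-map theorem rather than checked by hand. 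One small remark on your closing comment: if $\pi(g_{1})(K_{2})$ were only a proper subset of $K_{2}$ for some $g_{1}$, the coset $(g_{1},g_{2})H$ would be a proper subset of the fibre $g_{1}K_{1}\times g_{2}K_{2}$, so the failure would show up as non-injectivity of $\phi$ (several cosets over one point), rather than as a failure of surjectivity; surjectivity of $\phi$ follows from that of $\Phi$ regardless.
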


\begin{proof} Define a mapping $\theta \colon (G_{1}\ltimes_{\pi}G_{2})\times (M_{1}\times M_{2})\to M_{1}\times M_{2}$ by
\[
\theta\big((g_{1},g_{2}),(g_{1}'K_{1},g_{2}'K_{2})\big) = (g_{1}g_{1}'K_{1},g_{2}\pi(g_{1})(g_{2}')K_{2}).
\]
Then, $\theta$ is well-defined: If $h'_{1} = g_{1}'k_{1}$ and $h'_{2} = g_{2}'k_{2}$, for $k_{1}\in K_{1}$ and $k_{2}\in K_{2}$, we have $g_{1}h_{1}'K_{1} = g_{1}g_{1}'K_{1}$ and, using that $\pi(g_{1})(K_{2})\subset K_{2}$, we obtain
\[
g_{2}\pi(g_{1})(h_{2}')K_{2} = g_{2}\pi(g_{1})(g_{2}')\pi(g_{1})(k_{2})K_{2} = g_{2}\pi(g_{1})(g_{2}')K_{2}.
\]

The mapping $\theta$ is a $G_{1}\ltimes_{\pi}G_{2}$-transitive action on $M_{1}\times M_{2}:$ By a direct checking one can see that $\theta$ defines a smooth action.
Given $(g_{1}'K_{1},g_{2}'K_{2})$ and $(h_{1}'K_{1},h_{2}'K_{2})\in M_{1}\times M_{2}$, one gets
\[
\theta((h_{1}'g_{1}'^{-1}, h_{2}'\pi(h_{1}'g_{1}'^{-1})(g_{2}'^{-1})),(g_{1}'K_{1},g_{2}'K_{2})) = (h_{1}'K_{1},h_{2}'K_{2}),
\]
so $\theta$ is transitive.

Finally, the isotropy subgroup of $G_{1}\ltimes_{\pi}G_{2}$ at $(e_{1}K_{1},e_{2}K_{2})$ with respect to $\theta$  is clearly the semidirect product $K_{1}\ltimes_{\pi_{K_{1}}}\!K_{2}$. Since $\phi\big((g_{1}, g_{2})(K_{1}\ltimes_{\pi_{K_{1}}} K_{2})\big) = \theta(g_{1}, g_{2})$ $(e_{1}\!K_{1},e_{2}K_{2})$, we conclude that $\phi$ is a diffeomorphism.
\end{proof}

\begin{theorem}
\label{tsimplyconnected}
If $G$ is simply-connected and nonunimodular and $K$ is connected, the homogeneous manifold $M = G/K$ is diffeomorphic to the product $\mathbb{R}\times L/K$, where $L$ is a simply-connected unimodular Lie group and the leaves of the canonical foliation are the level hypersurfaces of the projection ${\rm j}\colon M\cong \mathbb{R}\times L/K \to \mathbb{R}$.
\end{theorem}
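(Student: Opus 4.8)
The plan is to derive the statement from Theorem \ref{pclosed}, which already exhibits $M=G/K$ as the total space of the codimension-one homogeneous fibration ${\rm j}\colon M=G/K\to G/L$ with fibre $L/K$, the fibres being the leaves of $\mathcal{F}_D$. It then remains to (i) identify the base $G/L$ with $\mathbb{R}$, (ii) check that $L$ is simply-connected and unimodular, and (iii) trivialise the bundle over the contractible base.

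For (i) and (ii) I would use the Lie group homomorphism $F=\det\mathrm{Ad}\colon G\to\mathbb{R}_{>0}$, whose differential at $e$ is $X\mapsto\mathrm{tr}\,\mathrm{ad}_X$; hence $\ker dF_e=\mathfrak{l}$, the unimodular kernel, and $F(G)$ is a nontrivial connected Lie subgroup of $\mathbb{R}_{>0}$ (nontrivial because $G$ is nonunimodular), so $F$ is onto $\mathbb{R}_{>0}$. By Theorem \ref{pclosed}, $L$ is the identity component of $\ker F$; but the homotopy exact sequence of the fibration $\ker F\to G\xrightarrow{F}\mathbb{R}_{>0}$, together with $\pi_0(G)=\pi_1(G)=0$ and the contractibility of $\mathbb{R}_{>0}$, shows that $\ker F$ is itself connected and simply-connected. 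Thus $L=\ker F$ is a closed, normal, simply-connected subgroup of codimension one with Lie algebra $\mathfrak{l}$, and $F$ induces an isomorphism of Lie groups $G/L\cong\mathbb{R}_{>0}$; composing ${\rm j}$ with $\log$ of this isomorphism gives a submersion $M\to\mathbb{R}$ (which is just $\log\det\mathrm{Ad}$ and has differential $-\eta^{c}$, in accordance with Theorem \ref{pclosed}). Unimodularity of $L$ is a one-line check: for $X\in\mathfrak{l}$, since $\mathfrak{l}$ is an ideal $\mathrm{ad}_X$ sends $\mathfrak{g}$ into $\mathfrak{l}$ and preserves it, so relative to a splitting $\mathfrak{g}=\mathbb{R}\xi\oplus\mathfrak{l}$ with $\xi\notin\mathfrak{l}$ it is block lower triangular with vanishing upper-left block; hence $0=\mathrm{tr}\,\mathrm{ad}_X^{\mathfrak{g}}=\mathrm{tr}\,\mathrm{ad}_X^{\mathfrak{l}}$, so $\mathfrak{l}$, and therefore $L$, is unimodular. (That $K\subset L$ is already built into Theorem \ref{pclosed}; it also follows since $\mathrm{Ad}(K)$ compact forces $F(K)$ to be a compact subgroup of $\mathbb{R}_{>0}$, hence $F(K)=\{1\}$.)

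For (iii), the fibration $L/K\to M\xrightarrow{{\rm j}}G/L\cong\mathbb{R}$ is a locally trivial fibre bundle---being the bundle associated to the principal $L$-bundle $G\to G/L$ via the $L$-space $L/K$---and any fibre bundle over the contractible base $\mathbb{R}$ is trivial. Hence there is a diffeomorphism $\Phi\colon\mathbb{R}\times(L/K)\to M$ with ${\rm j}\circ\Phi=\mathrm{pr}_1$. This is the desired diffeomorphism $M\cong\mathbb{R}\times L/K$, and because the leaves of the canonical foliation $\mathcal{F}_D$ are exactly the fibres of ${\rm j}$ by Theorem \ref{pclosed}, under $\Phi$ they correspond to the slices $\{t\}\times(L/K)$, that is, to the level hypersurfaces of ${\rm j}\colon M\cong\mathbb{R}\times L/K\to\mathbb{R}$.

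The step I expect to require the most care is the homotopy-theoretic identification in (ii)---that $\ker(\det\mathrm{Ad})$ is connected and simply-connected and that $G/L\cong\mathbb{R}$---together with the remark in (iii) that the homogeneous fibration of Theorem \ref{pclosed} is genuinely locally trivial, so that triviality over a contractible base may legitimately be invoked. The remaining ingredients (the trace computation for unimodularity, and $K\subset L$) are routine.
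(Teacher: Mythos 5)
Your argument is correct, but it takes a genuinely different route from the paper. The paper does not go through the fibration of Theorem \ref{pclosed} plus bundle triviality; instead it invokes the structure theorem (cited from Naimark--Stern) that the simply-connected $G$, whose Lie algebra splits as $\mathbb{R}\xi\oplus\mathfrak{l}$ with $\mathfrak{l}$ the unimodular kernel, is isomorphic to a semidirect product $\mathbb{R}\ltimes_{\pi}L$ with $L$ simply-connected and $(\pi(t))_{*}=\mathrm{Ad}_{\exp t\xi}$ on $\mathfrak{l}$; the $\mathrm{Ad}(K)$-invariance of $\xi$ then forces $\pi(t)$ to be the identity on $K$, so Lemma \ref{lm1m2} produces an explicit equivariant diffeomorphism $(\mathbb{R}\ltimes_{\pi}L)/K\cong\mathbb{R}\times(L/K)$, and the leaf statement follows from Theorem \ref{pclosed}. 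Your proof replaces this with the homomorphism $\det\mathrm{Ad}$, the homotopy exact sequence (which in fact shows directly that $\mathrm{Ker}(\det\mathrm{Ad})$ is connected and simply-connected, something the paper gets from the cited structure theorem), a clean trace computation for the unimodularity of $\mathfrak{l}$, and triviality of the associated bundle $G\times_{L}(L/K)$ over the contractible base $G/L\cong\mathbb{R}$; all of these steps are sound, including your observation that the fibration of Theorem \ref{pclosed} is locally trivial as an associated bundle and that $F(K)=\{1\}$ because $\det\mathrm{Ad}(K)$ is a compact (connected) subgroup of $\mathbb{R}_{>0}$. What the comparison buys: your argument is more topological and self-contained, and proves exactly the stated diffeomorphism; the paper's argument yields strictly more, namely the group splitting $G\cong\mathbb{R}\ltimes_{\pi}L$ with the specific action $\pi(t)_{*}=\mathrm{Ad}_{\exp t\xi}$ fixing $K$, and it is precisely this extra structure (not just the abstract trivialization) that is used afterwards in Theorem \ref{nt2t3} to upgrade the diffeomorphism to an isometry with the semidirect Riemannian product $\mathbb{R}\ltimes_{\pi}(L/K)$ and to compute $\eta^{c}=-\big(\mathrm{tr}\,\pi_{*}(\mathrm{d}/\mathrm{d} t)\big)\mathrm{d} t$. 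So your proof fully establishes the theorem as stated, but if one wants it to feed into the later results one would still need to extract the semidirect-product description of $G$, which your route does not provide.
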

\begin{proof} Denote by $\xi$ the dual vector of $\eta^{c}$ in $\mathfrak{m}$, with respect to an $\mathrm{Ad}(K)$-invariant inner product $\langle\cdot,\cdot\rangle$ on $\mathfrak{m}$. It is orthogonal to $D$ and, since $\eta^{c}$ is $\mathrm{Ad}(K)$-invariant, $\xi$ determines a $G$-invariant vector field on $M$. Because $G$ is simply-connected, it is isomorphic to the semidirect product $\mathbb{R}\ltimes_{\pi} L$,
with $L$ also simply-connected (cf.\ \cite[p.\ 519]{NaiSte}), and where, for each $t\in\mathbb{R}$,
$\pi(t)$ denotes the (unique) automorphism of $L$ such that $(\pi(t))_{*} = \mathrm{Ad}_{\exp\, t\xi}\colon \mathfrak{l}\to \mathfrak{l}$.

On the other hand, using that $\xi$ is $\mathrm{Ad}(K)$-invariant, one gets
that $(\pi(t))_{*}$ $=\exp \mathrm{ad}_{\,t\xi}$ acts as the identity on $\mathfrak{k}$, that is,
$k\exp\, t\xi = (\exp\, t\xi)\, k$, $k\in K$,
and hence $\pi(t)$ must be the identity map on $K$. Then, from Lemma \ref{lm1m2}, $\mathbb{R}\times L/K$ is diffeomorphic to the quotient $(\mathbb{R}\ltimes_{\pi}L)/K$ under the canonical identification $(t,xK)\mapsto (t,x)K$. The result now  follows from Theorem \ref{pclosed}.
\end{proof}

\section{Classes of homogeneous Riemannian manifolds}
\label{secfou}

The Levi-Civita connection $\nabla^{g}$ of a homogeneous Riemannian manifold $(M=G/K$, $g)$,
with a fixed reductive decomposition $\mathfrak{g} = \mathfrak{k} \oplus \mathfrak{m}$, is the $G$-invariant connection determined (cf.\ \cite[7.27, 7.21]{Bes}) by
\[
2 \langle \nabla^{g}_{X}Y^*,Z\rangle  = - \langle[X,Y]_\mathfrak{m} ,
Z\rangle - \langle [Y,Z]_{\mathfrak{m} },X\rangle + \langle[Z,X]_\mathfrak{m} , Y\rangle, 
\]
for all $X,Y, Z\in \mathfrak{m} $, where $\langle \cdot,\cdot\rangle$ denotes the $\mathrm{Ad}(K)$-invariant inner product on $\mathfrak{m} $
induced by $g$. Let $\mathrm{U} \colon \mathfrak{m} \times \mathfrak{m} \to \mathfrak{m} $ be the symmetric bilinear mapping defined by
\begin{equation}
\label{U}
2\langle \mathrm{U}(X,Y),Z\rangle = \langle[Z,X]_\mathfrak{m} ,Y\rangle + \langle[Z,Y]_\mathfrak{m} ,X\rangle.
\end{equation}
Since any  $G$-invariant tensor field on $M$ is parallel with respect to the canonical connection $\nabla^{c},$ it follows that $S = \nabla^{c}-\nabla^{g}$ determines a homogeneous structure on $M$. We say that $S$ is the {\em homogeneous structure
defined by the reductive decomposition\/} $\mathfrak{g} = \mathfrak{k} \oplus \mathfrak{m}$. At the origin, it is given by
\begin{equation}\label{SS}
S_{X}Y = \dfrac{1}{2} T^{c}_{X}Y - \mathrm{U} (X,Y), \qquad X,Y \in \mathfrak{m}.
\end{equation}
Then,
\begin{equation}\label{UT}
\mathrm{U}(X,Y) = -\frac{1}{2}(S_{X}Y + S_{Y}X),\quad T^{c}_{X}Y = S_{X}Y- S_{Y}X.
\end{equation}
Note that $T^{c}$ vanishes if and only if $S$ vanishes. Then $(G/K,g)$ is locally symmetric and, in general, it is locally symmetric for any $G$-invariant metric (see \cite[Chapter  IV, Proposition 3.6]{Hel}).
\begin{definition} \rm
A homogeneous Riemannian manifold $(M,g)$ is said to be {\it vectorial, cyclic, traceless, traceless cyclic\/} or {\it totally skew-symmetric} if there exists a quotient expression $M = G/K$ such that the canonical torsion or, equivalently, the homogeneous structure $S$ defined by the reductive decomposition $\mathfrak{g} = \mathfrak{k} \oplus \mathfrak{m}$, is. Then $G/K$ (resp., $\mathfrak{g} = \mathfrak{k}\oplus\mathfrak{m} )$ is said to be an {\it adapted quotient expression} (resp., an {\it adapted reductive decomposition}) to the corresponding class.
\end{definition}
So $(M=G/K,g)$, with a fixed adapted reductive decomposition $\mathfrak{g} = \mathfrak{k}\oplus \mathfrak{m}$, is:
\begin{enumerate}
\item[$\bullet$] vectorial if $[X,Y]_{\mathfrak{m}} = \dfrac1{n-1} (X\eta^{c}(Y) - Y\eta^{c}(X));$
\item[$\bullet$] cyclic if
\begin{equation}\label{conan}
\mathop{\text{\large$\mathfrak S$}\vrule width 0pt depth 2pt}_{XYZ}\langle[X,Y]_\mathfrak{m} ,Z\rangle = 0,\quad\mbox{or equivalently}\quad\mathop{\text{\large$\mathfrak S$}\vrule width 0pt depth 2pt}_{XYZ} S_{XYZ} = 0,
\end{equation}
and traceless cyclic if, moreover, $G$ is unimodular;
\item[$\bullet$] totally skew-symmetric (or, equivalently, naturally reductive) if
\[
\langle[X,Y]_{\mathfrak{m}},Z\rangle = -\langle[X,Z]_{\mathfrak{m}},Y\rangle;
\]
\end{enumerate}
for all $X,Y,Z\in \mathfrak{m}$. According with Proposition \ref{punimodular}, $(M,g)$ is traceless if and only if there exists a unimodular transitive subgroup $G$ of $I(M,g)$.

Any cyclic left-invariant metric \cite{GadGonOub1} on a Lie group makes it a cyclic homogeneous Riemannian manifold. Actually, cyclic metric (resp., bi-invariant) Lie groups are precisely the cyclic (resp., naturally reductive) homogeneous Riemannian manifolds whose isotropy subgroup for the associated
quotient expression is trivial.

Using the {\it Nomizu construction} (see \cite{Tri} and references therein), a connected, simply-connected and complete Riemannian manifold $(M,g)$ endowed with an AS-connection $\nabla$ can be expressed, for each fixed arbitrary point $o$ in $M$, as a quotient manifold $G/K$, where $G$ is simply-connected and $K$ is connected and it admits a reductive decomposition such that its canonical torsion and curvature coincide, respectively, with the torsion and the curvature of $\nabla$ at $o$. Hence, one directly proves the following result.

\begin{proposition}
\label{prop42}
Let $(M,g)$ be a connected, simply-connected and complete Riemannian manifold endowed with an AS-connection whose torsion is vectorial, cy\-clic, traceless, traceless cyclic or totally skew-symmetric. Then $(M,g)$ is a vectorial, cyclic, traceless, traceless cyclic or naturally reductive homogeneous Riemannian manifold, respectively.
\end{proposition}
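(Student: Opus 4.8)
The plan is to feed the hypothesis straight into the Nomizu construction recalled just above the statement. Fix an arbitrary point $o\in M$; since $(M,g)$ is connected, simply-connected and complete and carries the AS-connection $\nabla$, the construction produces a presentation $M = G/K$ with $G$ simply-connected and $K$ connected, together with a reductive decomposition $\mathfrak{g} = \mathfrak{k}\oplus\mathfrak{m}$ whose canonical torsion $T^{c}$ and canonical curvature $R^{c}$ agree at $o$ with the torsion $T$ and the curvature $R$ of $\nabla$, under the natural linear isometry $\mathfrak{m}\cong(T_{o}M,g_{o})$. By the definition of the classes in Section~\ref{secfou}, it then suffices to check that this reductive decomposition is \emph{adapted}, i.e.\ that $T^{c}$ (equivalently the homogeneous structure $S = \nabla^{c}-\nabla^{g}$) is globally of the prescribed type.

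Next I would observe that each of the five conditions --- vectorial, cyclic, traceless, traceless cyclic, totally skew-symmetric --- is, at a point, a linear-algebraic condition on the torsion: membership of $T_{p}$ in one of the $O(n)$-submodules $\mathcal{T}_{1}$, $\mathcal{T}_{1}\oplus\mathcal{T}_{2}$, $\mathcal{T}_{2}\oplus\mathcal{T}_{3}$, $\mathcal{T}_{2}$, $\mathcal{T}_{3}$ of the $O(n)$-module $\mathcal{T}\subset\otimes^{3}T_{p}M$ attached to $g_{p}$ (equivalently, membership of $S_{p}$ in the corresponding $\mathcal{S}_{i}$'s). Because the Nomizu identification $\mathfrak{m}\cong(T_{o}M,g_{o})$ is an isometry carrying $T^{c}_{o}$ to $T_{o}$, the tensor $T^{c}_{o}$ lies in the same submodule. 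Now $T^{c}$ is $G$-invariant, hence parallel with respect to the canonical connection $\nabla^{c}$, which is a metric connection; $\nabla^{c}$-parallel transport is therefore an isometry and commutes with the $O(n)$-equivariant projections onto the $\mathcal{T}_{i}$, so $T^{c}$ lies in the prescribed submodule at every point --- equivalently, the type of a $\nabla^{c}$-parallel torsion tensor is constant on the connected manifold $M$. Thus $\mathfrak{g} = \mathfrak{k}\oplus\mathfrak{m}$ is an adapted reductive decomposition, which already yields the conclusion in the vectorial, cyclic and totally skew-symmetric cases (the last of these being, by definition, naturally reductive).

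It remains to handle the extra unimodularity required in the traceless and traceless cyclic cases, and here I would apply the same parallel-transport argument to the canonical trace form: since $T_{o}$ is traceless we get $\eta^{c}_{o} = \mathrm{tr}\, T^{c}_{o} = \mathrm{tr}\, T_{o} = 0$, and $\eta^{c}$ is $G$-invariant, hence $\nabla^{c}$-parallel, hence identically zero on $M$; by Proposition~\ref{punimodular} this forces $G$ to be unimodular, so $(M,g)$ is traceless, respectively traceless cyclic. The only step requiring genuine care is the transfer of the pointwise Tricerri--Vanhecke type, both across the Nomizu isometry $\mathfrak{m}\cong T_{o}M$ and along $\nabla^{c}$-parallel transport; this hinges precisely on these maps being isometries and on the naturality ($O(n)$-equivariance) of the decomposition of $\mathcal{S}$ and $\mathcal{T}$ recalled in Section~\ref{sectwo}. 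Everything else is a direct unwinding of the definitions.
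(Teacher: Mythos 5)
Your proposal is correct and follows the same route as the paper, which simply invokes the Nomizu construction described immediately before the statement and regards the transfer of the algebraic type of the torsion (together with Proposition~\ref{punimodular} for unimodularity in the traceless cases) as a direct verification. The extra parallel-transport discussion is harmless but not needed, since the class conditions of Section~\ref{secfou} are stated on $\mathfrak{m}$ at the origin and the canonical torsion is $G$-invariant.
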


\section{Canonical foliations of homogeneous Riemannian manifolds}
\label{secfiv}

Let $(M = G/K,g)$ be a homogeneous Riemannian manifold and let $\mathfrak{g} = \mathfrak{k} \oplus \mathfrak{m}$ be a reductive decomposition. Denote by $\xi$ the dual vector of the canonical trace form
$\eta^{c}$ on $\mathfrak{m}$ with respect to $\langle\cdot,\cdot\rangle$ and put $\|\xi\| =c$, $c$ constant. Notice that $c = 0$ if and only if $G$ is unimodular. In what follows, we shall suppose that $G$ is not unimodular. Because $\mathfrak{k} \subset \mathfrak{l}$, one gets
$\mathfrak{g} = \mathbb{R}\xi\oplus \mathfrak{l} = \mathbb{R}\xi \oplus (\mathfrak{k} \oplus D)$. Then $[D, \xi]_{\mathfrak{m}}\subset D$ and, from (\ref{U}), one gets that $\mathrm{U}(\xi,\xi) = 0$. Hence, $\xi$ can be written as $\xi = - \sum_{i=1}^{n-1} \mathrm{U} (e_{i},e_{i})$, where $\{e_{1},\dots, e_{n-1}\}$ is an orthonormal basis of $(D,\langle\cdot,\cdot\rangle)$. Note that, by (\ref{SS}), $S_{\xi}\xi = 0$.
\begin{proposition}\label{cfund}
The canonical foliation of $(M = G/K,g)$ is Riemannian but not totally geodesic. Moreover, its second fundamental form is determined by the {\rm (}symmetric{\rm)}
bilinear mapping $h\colon D\times D\to \mathbb{R} \xi$ given by
\begin{equation}
\label{second}
h(X,Y) = \frac{1}{c^{2}}\,\eta^{c}\big(\mathrm{U}(X,Y)\big)\xi,\qquad \mbox{for all}\quad X,Y\in D.
\end{equation}
Then $H = - (1/(n-1))\xi$ is its mean curvature vector field.
\end{proposition}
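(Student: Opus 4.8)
The plan is to compute the second fundamental form of the leaves of $\mathcal{F}_D$ directly, using that the leaf through the origin is $N = L/K$ with adapted reductive decomposition $\mathfrak{l} = \mathfrak{k}\oplus D$, as established in Theorem~\ref{pclosed}. Since everything in sight is $G$-invariant, it suffices to work at the origin $o$, where $T_oN$ is identified with $D$ and the normal line is $\mathbb{R}\xi$ (recall $\xi\perp D$ and $\|\xi\| = c\neq 0$). The second fundamental form $h$ is the normal component of $\nabla^g_X Y^*$ for $X,Y\in D$; using the formula $\nabla^g_X Y^* = \frac12 T^c_X Y - \mathrm{U}(X,Y) + (\text{tangential to }N)$ — equivalently $S_X Y = \frac12 T^c_X Y - \mathrm{U}(X,Y)$ from \eqref{SS} — and the fact that $T^c_X Y = -[X,Y]_\mathfrak{m} \in D$ (because $[D,D]_\mathfrak{m}\subset D$ by integrability of $D$), the only term that can have a nonzero $\xi$-component is $-\mathrm{U}(X,Y)$. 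Projecting orthogonally onto $\mathbb{R}\xi$ gives
\begin{equation*}
h(X,Y) = -\frac{\langle \mathrm{U}(X,Y),\xi\rangle}{\|\xi\|^2}\,\xi = -\frac{1}{c^2}\,\langle \mathrm{U}(X,Y),\xi\rangle\,\xi,
\end{equation*}
and since $\langle\mathrm{U}(X,Y),\xi\rangle = \eta^c(\mathrm{U}(X,Y))$ by definition of $\xi$ as the metric dual of $\eta^c$, this is formula \eqref{second} up to sign; I would re-examine the sign convention for $T^c$ versus $\nabla^c$ to land exactly on the stated expression.

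For the mean curvature, I would trace $h$ over an orthonormal basis $\{e_1,\dots,e_{n-1}\}$ of $(D,\langle\cdot,\cdot\rangle)$. The paper has already recorded the key identity $\xi = -\sum_{i=1}^{n-1}\mathrm{U}(e_i,e_i)$, so $\sum_i \eta^c(\mathrm{U}(e_i,e_i)) = \eta^c(-\xi) = -\langle\xi,\xi\rangle = -c^2$. Hence $H = \frac{1}{n-1}\sum_i h(e_i,e_i) = \frac{1}{(n-1)c^2}(-c^2)\xi = -\frac{1}{n-1}\xi$, as claimed. That $\mathcal{F}_D$ is \emph{not} totally geodesic is then immediate: $h\neq 0$ since $H\neq 0$ (as $\xi\neq 0$ when $G$ is nonunimodular).

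It remains to see that $\mathcal{F}_D$ is a \emph{Riemannian} foliation, i.e.\ that the orthogonal complement $\mathbb{R}\xi$ defines a bundle-like metric — concretely, that the flow of (the normalization of) $\xi$ moves leaves to leaves isometrically, or equivalently that $\mathcal{L}_\xi g$ vanishes on $D$, i.e.\ $\langle\nabla^g_X\xi,Y\rangle + \langle\nabla^g_Y\xi,X\rangle = 0$ for $X,Y\in D$. Using $\nabla^g_X\xi^* = \frac12 T^c_X\xi - \mathrm{U}(X,\xi)$ and pairing with $Y\in D$, the $T^c$-term contributes $-\frac12\langle[X,\xi]_\mathfrak{m},Y\rangle$ and, by \eqref{U}, $\langle\mathrm{U}(X,\xi),Y\rangle = \frac12(\langle[Y,X]_\mathfrak{m},\xi\rangle + \langle[Y,\xi]_\mathfrak{m},X\rangle)$. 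Symmetrizing in $X,Y$, the $[Y,X]_\mathfrak{m}$ term pairs against $\xi$ and is killed because $[D,D]_\mathfrak{m}\subset D\perp\xi$, while the remaining pieces involving $[X,\xi]_\mathfrak{m}$ and $[Y,\xi]_\mathfrak{m}$ cancel against the $T^c$-contribution; one should also use $\eta^c([X,\xi]) = 0$ (as $[\mathfrak{m},\mathfrak{m}]\subset\mathfrak{l}$) to see $[X,\xi]_\mathfrak{m}\in D$. I expect this last verification — carefully tracking the several bracket terms and their symmetrizations so that everything cancels — to be the main bookkeeping obstacle, though it is entirely mechanical; alternatively one can invoke that $\xi$ generates (via the $\mathbb{R}$-factor of $G\cong\mathbb{R}\ltimes_\pi L$ from Theorem~\ref{tsimplyconnected}) a one-parameter group of diffeomorphisms permuting the leaves $a\cdot L/K$, which by $G$-invariance of $g$ restrict to isometries between leaves, giving the Riemannian (indeed homogeneous) structure of $\mathcal{F}_D$ with no computation at all.
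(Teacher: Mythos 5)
The formula \eqref{second}, the trace computation for $H$, and the non--totally-geodesic claim in your proposal follow essentially the paper's route (the paper works with the Weingarten map via $\nabla^{g}\xi=-S\xi$; you take the normal part of $\nabla^{g}_{X}Y$, which is equivalent), and your sign puzzle has a definite resolution: since $S=\nabla^{c}-\nabla^{g}$ and $\nabla^{c}_{X}Y^{*}=T^{c}_{X}Y=-[X,Y]_{\mathfrak m}$ is tangent to the leaf, the normal component of $\nabla^{g}_{X}Y^{*}$ is that of $-S_{X}Y$, i.e.\ of $+\mathrm{U}(X,Y)$; equivalently $\nabla^{g}_{X}Y^{*}=\tfrac12 T^{c}_{X}Y+\mathrm{U}(X,Y)$, not $\tfrac12 T^{c}_{X}Y-\mathrm{U}(X,Y)$ as you wrote (you substituted $S_{X}Y$ itself for $\nabla^{g}_{X}Y$). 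With the correct sign one lands exactly on \eqref{second}, and your trace argument for $H=-\xi/(n-1)$ is then the same as the paper's, via $\xi=-\sum_{i}\mathrm{U}(e_{i},e_{i})$.

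The genuine gap is in the ``Riemannian foliation'' part. The criterion you propose to verify, $(\mathcal L_{\xi}g)|_{D\times D}=0$, is not the bundle-like condition: for a hypersurface foliation with normal field $\xi$ and $X,Y$ tangent to the leaves one has $(\mathcal L_{\xi}g)(X,Y)=\langle\nabla^{g}_{X}\xi,Y\rangle+\langle\nabla^{g}_{Y}\xi,X\rangle=-2\langle h(X,Y),\xi\rangle=-2\,\eta^{c}(\mathrm{U}(X,Y))$, so demanding it to vanish is precisely demanding the leaves to be totally geodesic, which the proposition itself denies. (Your bracket bookkeeping appears to close up only because you differentiate the fundamental field $\xi^{*}$, which differs from the invariant normal by the leaf-tangent term $T^{c}_{X}\xi$; either way it does not test the right object. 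Think of hyperbolic space foliated by horospheres, i.e.\ $G^{n}(\alpha,\dots,\alpha)$: the normal flow is a homothety, not an isometry, between leaves, yet the foliation is Riemannian.) The bundle-like condition is the vanishing of the Lie derivative of the \emph{transverse} metric along \emph{leaf-tangent} fields, which in codimension one comes down to the unit normal being a geodesic field; this is what holds here, since $\nabla^{g}_{\xi}\xi=-S_{\xi}\xi=0$ (using $\mathrm{U}(\xi,\xi)=0$), and it is exactly what the paper invokes through \cite[Lemma 4.1]{Gon1}. Your fallback argument is also insufficient as stated: the existence of isometries of $(M,g)$ permuting the leaves does not imply the leaves are locally equidistant--- a $G$-invariant foliation of a homogeneous space need not be Riemannian (foliate real hyperbolic space, in its solvable-group model, by left cosets of the $\mathbb R$-factor: left translations are isometries permuting these asymptotic geodesics, which are not equidistant). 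What does give a computation-free proof is the structure already provided by Theorem \ref{pclosed}: the leaves are the orbits of the \emph{normal} subgroup $L$, which acts by isometries, preserves every leaf and is transitive on each leaf; hence the distance from a point to a fixed nearby leaf is constant along each leaf, i.e.\ the leaves are equidistant and the foliation is Riemannian. This also avoids appealing to Theorem \ref{tsimplyconnected}, whose hypotheses ($G$ simply-connected, $K$ connected) are not assumed in the proposition.
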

\begin{proof} Using that $\mathrm{U}(\xi,\xi) = 0$, $D$ determines a Riemannian foliation (see \cite[Lemma 4.1]{Gon1}) and $\xi$ must be geodesic.
If $D$ were totally geodesic, we would equivalently have $\mathrm{U}(D,D)\subset D$, but this condition would imply that $\eta^{c}(\xi) = 0$, which is a contradiction.

Because $\xi$ is $G$-invariant, it must be $\nabla^{c}$-parallel. Hence, one gets $\nabla^{g} \xi = -S\xi$.
By using that $\eta^{c}$ is closed and (\ref{SS}), we have $h(X,Y)  =  -(1/c^{2})\langle S_{X}Y,\xi\rangle\xi = (1/c^{2})\langle \mathrm{U}(X,Y),\xi\rangle\xi$, for all $X,Y\in D$. This proves (\ref{second}) and directly implies the last part of the proposition.
\end{proof}

The notion of orthogonal semidirect product of Lie groups equipped with left-invariant metrics can be extended to
homogeneous Riemannian manifolds, as follows. Let $(M_{1}= G_{1}/K_{1},g_{1})$ and $(M_{2}=
G_{2}/K_{2},g_{2})$ be two homogeneous Riemannian manifolds with reductive decompositions $\mathfrak{g}_{i} =
\mathfrak{k}_{i} \oplus \mathfrak{m}_{i}$ and corresponding $\mathrm{Ad}(K_{i})$-invariant inner products $\langle\cdot,\cdot\rangle_{i}$
on $\mathfrak{m}_{i}$, for $i = 1,2$. Consider a homomorphism $\pi \colon G_{1}\to \mathrm{Aut}(G_{2})$ such that $\pi(g_{1})(K_{2})\subset K_{2}$, for all $g_{1}\in G_{1}$. Then, from Lemma \ref{lm1m2}, $G_{1}\ltimes_{\pi}G_{2}$ acts transitively on $M_{1}\times M_{2}$.

\begin{definition}
\rm The \emph{semidirect Riemannian product $(M = M_{1}\ltimes_{\pi}M_{2},g_{\pi})$} is the product
manifold $M_{1}\times M_{2}$ equipped with the $G_{1}\ltimes_{\pi}G_{2}$-invariant metric tensor $g_{\pi}$ such that $g_{\pi(o_{1},o_{2})}$ is the inner product $\langle\cdot,\cdot\rangle = \langle\cdot,\cdot\rangle_{1} + \langle\cdot,\cdot\rangle_{2}$, under the identification $\mathfrak{m}_{1}\oplus \mathfrak{m}_{2}\cong
T_{(o_{1},o_{2})}M$.
\end{definition}
This means that $M = M_{1}\ltimes_{\pi}M_{2}$ is a $G_{1}\ltimes_{\pi}G_{2}$-homogeneous Riemannian manifold. Note that if $g_{\pi}$ exists, it is unique.
\begin{lemma}
\label{lm1m22} 
Suppose that $M_{1}= \mathbb{R}$, $K_{2}$ is connected and $\mathfrak{k}_{2}\subset {\rm Ker}\;\pi_{*}(\mathrm{d} / \mathrm{d} t)
$. Then the inner product $\langle\cdot,\cdot\rangle$ on $\mathbb{R}\oplus \mathfrak{m}_{2}$ determines a $(\mathbb{R}\ltimes_{\pi}G_{2})$-invariant metric $g_{\pi}$ on $M =\mathbb{R}\times (G_{2}/K_{2})$ and $(M,g_{\pi})$ is the
semidirect Riemannian product $\mathbb{R}\ltimes_{\pi}(G_{2}/K_{2})$.
\end{lemma}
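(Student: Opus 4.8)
The plan is to reduce everything to Lemma~\ref{lm1m2} together with a direct inspection of the adjoint action of the isotropy group; the only real work is the passage from the infinitesimal hypothesis on $\mathfrak{k}_{2}$ to global information about $\pi$ on $K_{2}$.

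First I would show that $\pi(t)$ restricts to the identity on $K_{2}$ for every $t\in\mathbb{R}$. Write $D:=\pi_{*}(\mathrm{d}/\mathrm{d}t)\in\mathrm{Der}(\mathfrak{g}_{2})$; since $t\mapsto(\pi(t))_{*}$ is a one-parameter subgroup of $\mathrm{Aut}(\mathfrak{g}_{2})$, one has $(\pi(t))_{*}=\exp(tD)$ on $\mathfrak{g}_{2}$. The hypothesis $\mathfrak{k}_{2}\subset\mathrm{Ker}\,D$ forces $D^{n}|_{\mathfrak{k}_{2}}=0$ for all $n\geqslant 1$, hence $\exp(tD)|_{\mathfrak{k}_{2}}=\mathrm{id}$, and therefore $\pi(t)(\exp X)=\exp\big((\pi(t))_{*}X\big)=\exp X$ for every $X\in\mathfrak{k}_{2}$. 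As $K_{2}$ is connected it is generated (as a group) by $\exp\mathfrak{k}_{2}$, and $\pi(t)$ is a homomorphism, so $\pi(t)|_{K_{2}}=\mathrm{id}_{K_{2}}$; in particular $\pi(t)(K_{2})\subset K_{2}$. I expect this to be the \emph{main obstacle}, in the sense that it is where the hypotheses are genuinely used; everything afterwards is bookkeeping.

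With this in hand, Lemma~\ref{lm1m2} applies with $G_{1}=\mathbb{R}$ and $K_{1}=\{0\}$: then $\pi_{K_{1}}$ is trivial, $K_{1}\ltimes_{\pi_{K_{1}}}K_{2}=\{0\}\times K_{2}\cong K_{2}$, and the equivariant diffeomorphism $\phi$ of that lemma identifies $(\mathbb{R}\ltimes_{\pi}G_{2})/K_{2}$ with $\mathbb{R}\times(G_{2}/K_{2})$ via $(t,x)K_{2}\mapsto(t,xK_{2})$. Next I would produce the reductive data. Put $\mathfrak{g}=\mathbb{R}\oplus\mathfrak{g}_{2}$, $\mathfrak{k}=\{0\}\oplus\mathfrak{k}_{2}$ and $\mathfrak{m}=\mathbb{R}\oplus\mathfrak{m}_{2}$. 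Because $\pi(t)|_{K_{2}}=\mathrm{id}$, conjugation in $\mathbb{R}\ltimes_{\pi}G_{2}$ by an element $(0,k)$ with $k\in K_{2}$ reads $(0,k)(t,g)(0,k)^{-1}=(t,\,kg\,\pi(t)(k^{-1}))=(t,\,kgk^{-1})$; differentiating, $\mathrm{Ad}(0,k)$ fixes the $\mathbb{R}$-summand and acts on $\mathfrak{g}_{2}$ as $\mathrm{Ad}_{G_{2}}(k)$. Hence $\mathfrak{m}$ is $\mathrm{Ad}(K_{2})$-invariant, so $\mathfrak{g}=\mathfrak{k}\oplus\mathfrak{m}$ is a reductive decomposition adapted to the quotient, and the inner product $\langle\cdot,\cdot\rangle=\langle\cdot,\cdot\rangle_{1}+\langle\cdot,\cdot\rangle_{2}$ on $\mathfrak{m}$, being block diagonal with $\mathrm{Ad}(0,k)$ acting by $\mathrm{id}$ on the first block and by the $\langle\cdot,\cdot\rangle_{2}$-isometry $\mathrm{Ad}_{G_{2}}(k)$ on the second, is $\mathrm{Ad}(K_{2})$-invariant.

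Finally I would assemble the conclusion. The $\mathrm{Ad}(K_{2})$-invariant inner product $\langle\cdot,\cdot\rangle$ on $\mathfrak{m}$ determines a $(\mathbb{R}\ltimes_{\pi}G_{2})$-invariant metric on the quotient, which via $\phi$ becomes a $(\mathbb{R}\ltimes_{\pi}G_{2})$-invariant metric $g_{\pi}$ on $M=\mathbb{R}\times(G_{2}/K_{2})$ whose value at $(o_{1},o_{2})$ equals $\langle\cdot,\cdot\rangle_{1}+\langle\cdot,\cdot\rangle_{2}$ under the identification $\mathfrak{m}_{1}\oplus\mathfrak{m}_{2}\cong T_{(o_{1},o_{2})}M$. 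This is precisely the defining property of the semidirect Riemannian product $\mathbb{R}\ltimes_{\pi}(G_{2}/K_{2})$, and since such a metric is unique, $(M,g_{\pi})$ is that product, which finishes the proof.
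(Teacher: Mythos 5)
Your proof is correct and follows essentially the same route as the paper's (much terser) argument: use the connectedness of $K_{2}$ to pass from the infinitesimal hypothesis $\mathfrak{k}_{2}\subset\mathrm{Ker}\,\pi_{*}(\mathrm{d}/\mathrm{d}t)$ to $\mathrm{Ad}$-invariance of the decomposition $\mathbb{R}\oplus_{\pi_{*}}\mathfrak{g}_{2}=\mathfrak{k}_{2}\oplus(\mathbb{R}\oplus\mathfrak{m}_{2})$, invoke Lemma~\ref{lm1m2} for the identification of $(\mathbb{R}\ltimes_{\pi}G_{2})/K_{2}$ with $\mathbb{R}\times(G_{2}/K_{2})$, and note that $\langle\cdot,\cdot\rangle$ is invariant under the isotropy. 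You merely spell out the details the paper leaves implicit (in particular that $\pi(t)$ restricts to the identity on $K_{2}$, an argument the paper carries out in the proof of Theorem~\ref{tsimplyconnected}).
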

\begin{remark}{\rm Here, $\pi_{*}$ is the differential of the homomorphism $\pi \colon \mathbb{R}\to \mathrm{Aut}(\mathfrak{g}_{2})$, which is also denoted by $\pi$, given by $\pi(t)= (\pi(t))_{*}$. Then $\pi_{*}$ is a Lie algebra homomorphism $\mathbb{R}\to\mathrm{Der}(\mathfrak{g}_{2})$. Note that if $G$ is simply-connected, the Lie groups $\mathrm{Aut}(G)$ and $\mathrm{Aut}(\mathfrak{g})$ are naturally isomorphic.}
\end{remark}
\noindent {\it Proof of Lemma \ref{lm1m22}.} From the connectedness of $K_{2}$, the condition $\mathfrak{k}_{2}\subset {\rm Ker}\,\pi_{*}(\mathrm{d} / \mathrm{d} t)$ is equivalent to $\mathbb{R}\oplus_{\pi_{*}}\mathfrak{g}_{2} = \mathfrak{k}_{2}\oplus (\mathbb{R}\oplus \mathfrak{m}_{2})$ being a reductive decomposition. Then, since the inner product $\langle\cdot,\cdot\rangle$ is $\mathrm{Ad} (K_{1}\ltimes_{\pi_{K_{1}}}K_{2})$-invariant, we conclude. \hfill $\square$

\smallskip

Next, we show that any simply-connected homogeneous Riemannian manifold admitting a homogeneous structure which is not of type ${\mathcal S}_{2}\oplus {\mathcal S}_{3}$ must be a semidirect Riemannian product of the real line and a simply-connected traceless homogeneous Riemannian manifold. Therefore, in the general study of homogeneous Riemannian manifolds, the class of traceless homogeneous Riemannian manifolds plays a key role.  This was expected on account of the dimensions of the classes, but we give an explicit geometrical description.
\begin{theorem}
\label{nt2t3}
A connected, simply-connected and complete Riemannian manifold $(M,g)$ admits an AS-connection whose torsion has non-vanishing trace form or, equivalently, it admits a homogeneous structure which is not of type ${\mathcal S}_{2}\oplus {\mathcal S}_{3}$, if and only if it is isometric to a semidirect Riemannian product $\mathbb{R} \ltimes_{\pi} (L/K)$, where $L$ is a simply-connected unimodular Lie group, $K$ is a connected closed subgroup of $L$ and the derivation $\pi_{*}(\mathrm{d}/\mathrm{d} t)$ of the Lie algebra $\mathfrak{l}$ of $L$ satisfies:
\[
{\rm (1)} \;\; \mathrm{tr}\, \pi_{*}(\mathrm{d}/\mathrm{d} t)\neq 0; \quad
{\rm (2)} \; \; \mathfrak{k}\subset \mathrm{Ker}\;\pi_{*}(\mathrm{d}/\mathrm{d} t).
\] 
Then,
\begin{enumerate}
\item[{\rm (i)}] the canonical trace form on $M\cong \mathbb{R}\ltimes_{\pi}(L/K)$ is $\eta^{c} = -\big(\mathrm{tr}\; \pi_{*}(\mathrm{d}/\mathrm{d} t)\big) \mathrm{d} t;$

\item[{\rm (ii)}] the projection ${\rm j}\colon\mathbb{R}\ltimes_{\pi}(L/K)\to \mathbb{R}$ is an isoparametric function whose level hypersurfaces are the leaves of the canonical foliation.
\end{enumerate}
\end{theorem}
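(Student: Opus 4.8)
The plan is to prove the equivalence by combining the Nomizu construction with Theorem~\ref{tsimplyconnected}, reading off (i) along the way and deducing (ii) at the end; the equivalence of the two hypotheses is immediate from the identification, recalled in Section~\ref{sectwo}, of traceless tensors with those of type $\mathcal{S}_{2}\oplus\mathcal{S}_{3}$. For the "only if" direction I would take a connected, simply-connected, complete $(M,g)$ carrying an AS-connection $\nabla$ whose torsion has $\eta\ne 0$, fix $o\in M$, and use the Nomizu construction to write $M=G/K$ with $G$ simply-connected, $K$ connected, and a reductive decomposition $\mathfrak{g}=\mathfrak{k}\oplus\mathfrak{m}$ whose canonical torsion at $o$ is that of $\nabla$. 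Then $\eta^{c}\ne 0$, so $G$ is non-unimodular by Proposition~\ref{punimodular}, and Theorem~\ref{tsimplyconnected} applies and gives $G\cong\mathbb{R}\ltimes_{\pi}L$ with $L$ simply-connected unimodular, $\mathfrak{g}=\mathbb{R}\xi\oplus\mathfrak{l}$, $\mathfrak{l}=\mathfrak{k}\oplus D$, $\mathfrak{m}=\mathbb{R}\xi\oplus D$ an orthogonal decomposition, $K$ closed and connected in $L$, and $\pi_{*}(\mathrm{d}/\mathrm{d} t)=\mathrm{ad}_{\xi}|_{\mathfrak{l}}$ (the generator of the $\mathbb{R}$-factor being $\xi$).

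I would then check (1) and (2). For (2): since $\eta^{c}$ and $\langle\cdot,\cdot\rangle$ are $\mathrm{Ad}(K)$-invariant, so is the dual vector $\xi$, whence $[\xi,W]=0$ for all $W\in\mathfrak{k}$, i.e.\ $\mathfrak{k}\subset\mathrm{Ker}\,\mathrm{ad}_{\xi}|_{\mathfrak{l}}=\mathrm{Ker}\,\pi_{*}(\mathrm{d}/\mathrm{d} t)$. For (1): in the splitting $\mathfrak{g}=\mathbb{R}\xi\oplus\mathfrak{l}$ the operator $\mathrm{ad}_{\xi}$ kills $\xi$ and preserves the ideal $\mathfrak{l}$, so $\mathrm{tr}\,\pi_{*}(\mathrm{d}/\mathrm{d} t)=\mathrm{tr}\,\mathrm{ad}_{\xi}|_{\mathfrak{l}}=\mathrm{tr}_{\mathfrak{g}}\,\mathrm{ad}_{\xi}=-\eta^{c}(\xi)=-\|\xi\|^{2}\ne 0$ by \eqref{c12}; combined with $\eta^{c}|_{D}=0$ this also yields (i) in the form $\eta^{c}=\|\xi\|^{2}\,\mathrm{d} t=-\big(\mathrm{tr}\,\pi_{*}(\mathrm{d}/\mathrm{d} t)\big)\,\mathrm{d} t$. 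To turn the diffeomorphism of Theorem~\ref{tsimplyconnected} into an isometry I would observe that $\pi(t)=\exp\mathrm{ad}_{t\xi}$ restricts to the identity on the connected group $K$, so that Lemma~\ref{lm1m22} applies and identifies $\mathbb{R}\times(L/K)$ with $(\mathbb{R}\ltimes_{\pi}L)/K$ carrying the $\mathrm{Ad}(K)$-invariant inner product $\langle\cdot,\cdot\rangle$ on $\mathfrak{m}=\mathbb{R}\xi\oplus D$; since this inner product is the orthogonal sum of its restrictions to the two summands, it is exactly the semidirect Riemannian product metric, and hence $(M,g)$ is isometric to $\mathbb{R}\ltimes_{\pi}(L/K)$.

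For the "if" direction I would run the same computation in reverse: given $\mathbb{R}\ltimes_{\pi}(L/K)$ with (1) and (2), fix a reductive complement $\mathfrak{m}_{L}$ to $\mathfrak{k}$ in $\mathfrak{l}$; by (2) and Lemma~\ref{lm1m22}, $\mathfrak{k}\oplus(\mathbb{R}\oplus\mathfrak{m}_{L})$ is a reductive decomposition of the homogeneous Riemannian manifold $(\mathbb{R}\ltimes_{\pi}L)/K$, which is connected, complete and simply-connected (the last because $L/K$ is simply-connected, $L$ being simply-connected and $K$ connected), and its canonical connection is an AS-connection. Computing $\eta^{c}$ from \eqref{c12} by the same block argument — now using unimodularity of $L$ to kill $\eta^{c}$ on $\mathfrak{m}_{L}$, while $\mathrm{ad}_{\mathrm{d}/\mathrm{d} t}$ restricts to $\pi_{*}(\mathrm{d}/\mathrm{d} t)$ on $\mathfrak{l}$ — gives $\eta^{c}=-\big(\mathrm{tr}\,\pi_{*}(\mathrm{d}/\mathrm{d} t)\big)\,\mathrm{d} t$, which is (i) and is nonzero by (1), so the homogeneous structure is not of type $\mathcal{S}_{2}\oplus\mathcal{S}_{3}$. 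Finally for (ii): by Theorem~\ref{pclosed} the leaves of $\mathcal{F}_{D}$ are the fibres $\mathrm{j}^{-1}(aL)=a\cdot(L/K)$, i.e.\ the level sets $\{t_{0}\}\times(L/K)$ of $\mathrm{j}$; and by (i) $\mathrm{grad}\,\mathrm{j}$ is a nonzero constant multiple of the $G$-invariant vector field $\xi$, so $\|\mathrm{grad}\,\mathrm{j}\|^{2}$ and $\Delta\,\mathrm{j}=\mathrm{div}(\mathrm{grad}\,\mathrm{j})$ are $G$-invariant, hence constant, hence (trivially) functions of $\mathrm{j}$, and $\mathrm{j}$ is isoparametric with level hypersurfaces the leaves of the canonical foliation.

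I expect the main obstacle to be bookkeeping rather than conceptual: verifying that the reductive decomposition produced by Theorem~\ref{tsimplyconnected} genuinely meets the hypotheses of Lemma~\ref{lm1m22} with the inner products matching (so that "diffeomorphic" upgrades to "isometric to a semidirect Riemannian product"), and normalizing the coordinate $t$ on the $\mathbb{R}$-factor so that the coefficient in (i) comes out exactly as $-\,\mathrm{tr}\,\pi_{*}(\mathrm{d}/\mathrm{d} t)$.
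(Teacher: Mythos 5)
Your proposal is correct and follows essentially the same route as the paper: the Nomizu construction together with Proposition \ref{punimodular}, then Theorem \ref{tsimplyconnected} and Lemma \ref{lm1m22} to upgrade the diffeomorphism to an isometry with a semidirect Riemannian product $\mathbb{R}\ltimes_{\pi}(L/K)$, with (i) obtained from the trace formula \eqref{c12}, and the converse by the same computation (the normalization of $t$ you worry about is harmless, since the identity in (i) is invariant under rescaling the $\mathbb{R}$-factor). The only real deviation is in (ii), where you check isoparametricity directly from the $G$-invariance, hence constancy, of $\|\nabla{\rm j}\|^{2}$ and $\Delta{\rm j}$, whereas the paper deduces transnormality from $\nabla{\rm j}=\mathrm{d}/\mathrm{d} t$ and then uses Proposition \ref{cfund} and the criterion in \cite{Miy}; your variant is a slightly more self-contained argument for the same point.
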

\begin{proof}
By the Nomizu construction and using Proposition \ref{punimodular}, $(M,g)$ is a homogeneous Riemannian manifold admitting a quotient expression $G/K$, where $G$ is simply-connected and nonunimodular and $K$ is connected. Then, from Theorem \ref{tsimplyconnected}, $G/K$ is diffeomorphic to $\mathbb{R} \times
(L/K)$ and $G$ is isomorphic to the semidirect product $\mathbb{R}
\ltimes_{\pi}L$, where $L$ is the identity component of $\mathrm{Ker}(\mathrm{det}\,\mathrm{Ad})$. By choosing an auxiliary $\mathrm{Ad}(K)$-invariant inner product on $\mathfrak{k}$, we may extend $\langle\cdot,\cdot\rangle$ to an $\mathrm{Ad}(K)$-invariant inner product, also denoted by $\langle\cdot,\cdot\rangle$, on all of $\mathfrak{g}$ by setting $\langle\mathfrak{k},\mathfrak{m}\rangle =0$. Because the decomposition $\mathfrak{g} = \mathbb{R}\xi\oplus \mathfrak{l}$ is $\langle\cdot,\cdot\rangle$-orthogonal, the corresponding metric Lie group $G$ is isometric to $\mathbb{R}\ltimes_{\pi}L$, considered as an orthogonal semidirect product. Under the identification $\mathfrak{g} \cong \mathbb{R} \,\mathrm{d}/\mathrm{d} t\oplus_{\pi_{*}}\mathfrak{l}$, the unit vector $\mathrm{d}/\mathrm{d} t$ and $\xi$ must be collinear. Hence, one gets that $\mathfrak{k}\subset \mathrm{Ker}\;\pi_{*}(\mathrm{d}/\mathrm{d} t)$. So, using Lemma \ref{lm1m22}, $(M,g)$ is isometric to $\mathbb{R} \ltimes_{\pi}(L/K)$. The converse is immediate applying again Theorem \ref{tsimplyconnected}. 

As for the expression of $\eta^{c}$ in (i) we use, from (\ref{c12}), that $\eta^{c}(\mathrm{d}/\mathrm{d} t) = -\mathrm{tr}\,\, \pi_{*}(\mathrm{d} /\mathrm{d} t)$. Finally, we show (ii). Because $g_{\pi}(\nabla{\rm j},\mathrm{d}/\mathrm{d} t) = 1$, where $\nabla {\rm j}$ denotes the gradient vector field of ${\rm j}$, it follows that $\nabla{\rm j} = \mathrm{d}/\mathrm{d} t$ and then $\nabla{\rm j}$ is a $(\mathbb{R}\ltimes_{\pi}L)$-invariant unit vector field. Hence, ${\rm j}$ is a transnormal function and, using Theorem \ref{tsimplyconnected} and Proposition \ref{cfund}, it is moreover an isoparametric function (see, for example, \cite{Miy}).
\end{proof}

As a direct consequence of Theorem \ref{nt2t3}, we have the next result.
\begin{corollary}
\label{pnon-uni}
A connected, simply-connected and complete Riemannian manifold $(M,g)$ admits a homogeneous structure of type ${\mathcal S}_{1}\oplus {\mathcal S}_{2}$ which is not of type ${\mathcal S}_{2}$ if and only if it is isometric to a semidirect Riemannian product $\mathbb{R} \ltimes_{\pi} (L/K)$, where:
\begin{enumerate}
\item[{\rm (i)}] $L$ is a simply-connected unimodular Lie group; \smallskip
\item[{\rm (ii)}] $\mathop{\text{\Large$\mathfrak S$}\vrule width 0pt depth 2pt}_{XYZ}\langle[X,Y]_D ,Z\rangle = 0$, for all $X,Y,Z\in D;$\smallskip
\item[{\rm (iii)}] the derivation $\pi_{*}(\mathrm{d}/\mathrm{d} t)$ of the Lie algebra $\mathfrak{l}$ of $L$ satisfies: \smallskip
\begin{enumerate}
\item[{\rm (1)}] $\mathrm{tr}\, \pi_{*}(\mathrm{d}/\mathrm{d} t)\neq 0;$ \smallskip
\item[{\rm (2)}] $\mathfrak{k}\subset \mathrm{Ker}\;\pi_{*}(\mathrm{d}/\mathrm{d} t);$ \smallskip
\item[{\rm (3)}] $\langle \pi_{*}(\mathrm{d}/\mathrm{d} t)X_{\mid D },Y\rangle  = \langle\pi_{*}(\mathrm{d}/\mathrm{d} t)Y_{\mid D },X\rangle$, for all $X,Y\in D $.
\end{enumerate}
\end{enumerate}
\end{corollary}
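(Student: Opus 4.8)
The plan is to obtain this as a refinement of Theorem \ref{nt2t3}: the only genuinely new work is to translate the cyclicity of the homogeneous structure into the extra conditions (ii) and (iii)(3).

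For the direct implication, I would first observe that a homogeneous structure of type $\mathcal{S}_{1}\oplus\mathcal{S}_{2}$ that is not of type $\mathcal{S}_{2}$ is exactly a cyclic one with non-vanishing trace form (the trace form sees only the $\mathcal{S}_{1}$-component), so in particular it is not of type $\mathcal{S}_{2}\oplus\mathcal{S}_{3}$. Hence Theorem \ref{nt2t3} already supplies an isometry $(M,g)\cong\mathbb{R}\ltimes_{\pi}(L/K)$ with $L$ simply-connected unimodular, $K$ connected and closed, with $\pi_{*}(\mathrm{d}/\mathrm{d} t)$ satisfying conditions (iii)(1) and (iii)(2), and with $\eta^{c}=-\big(\mathrm{tr}\,\pi_{*}(\mathrm{d}/\mathrm{d} t)\big)\,\mathrm{d} t$ by part (i) of that theorem. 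What remains is to check that, for such an $M$, cyclicity of the canonical structure is equivalent to (ii) together with (iii)(3). For this I would use the identification $\mathfrak{g}\cong\mathbb{R}\,\mathrm{d}/\mathrm{d} t\oplus_{\pi_{*}}\mathfrak{l}$, with the reductive decomposition $\mathfrak{g}=\mathfrak{k}\oplus\mathfrak{m}$, $\mathfrak{m}=\mathbb{R}\,\mathrm{d}/\mathrm{d} t\oplus D$, coming from Lemma \ref{lm1m22} (recall that the dual vector $\xi$ of $\eta^{c}$ is a nonzero multiple of $\mathrm{d}/\mathrm{d} t$), and then record the $\mathfrak{m}$-parts of the brackets: since $\mathfrak{l}$ is a subalgebra and $\pi_{*}(\mathrm{d}/\mathrm{d} t)$ is a derivation of $\mathfrak{l}$, one has $[X,Y]_{\mathfrak{m}}=[X,Y]_{D}$ for $X,Y\in D$, then $[\mathrm{d}/\mathrm{d} t,X]_{\mathfrak{m}}=\pi_{*}(\mathrm{d}/\mathrm{d} t)X_{\mid D}$ for $X\in D$, and $[\mathrm{d}/\mathrm{d} t,\mathrm{d}/\mathrm{d} t]_{\mathfrak{m}}=0$.

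The cyclic sum $\mathfrak{S}_{XYZ}\langle[X,Y]_{\mathfrak{m}},Z\rangle$ is trilinear and invariant under cyclic permutations, and $\langle\mathfrak{k},\mathfrak{m}\rangle=0$ and $\langle\mathrm{d}/\mathrm{d} t,D\rangle=0$; so it is enough to evaluate it when each of $X,Y,Z$ is either $\mathrm{d}/\mathrm{d} t$ or lies in $D$. Going through the cases: three arguments in $D$ gives exactly $\mathfrak{S}_{XYZ}\langle[X,Y]_{D},Z\rangle$, that is (ii); two arguments $X,Y\in D$ and the third equal to $\mathrm{d}/\mathrm{d} t$ (moved cyclically to the last slot) gives $\langle\pi_{*}(\mathrm{d}/\mathrm{d} t)X_{\mid D},Y\rangle-\langle\pi_{*}(\mathrm{d}/\mathrm{d} t)Y_{\mid D},X\rangle$, that is (iii)(3); and with at most one argument in $D$ the sum vanishes identically, since $[\,\cdot\,,\mathrm{d}/\mathrm{d} t]_{\mathfrak{m}}\in D$ is orthogonal to $\mathrm{d}/\mathrm{d} t$. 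This settles the direct implication. For the converse I would read the same computation backwards: given $\mathbb{R}\ltimes_{\pi}(L/K)$ satisfying (i)--(iii), with $K$ connected as the semidirect Riemannian product requires, Lemma \ref{lm1m22} (using (iii)(2)) makes it a homogeneous Riemannian manifold with the reductive decomposition above, hence complete, and simply-connected because $L/K$ is so (the homotopy sequence of $K\hookrightarrow L\to L/K$ with $\pi_{1}(L)=0$ and $\pi_{0}(K)=0$) while $\mathbb{R}$ is contractible; then (ii) and (iii)(3) force $\mathfrak{S}_{XYZ}\langle[X,Y]_{\mathfrak{m}},Z\rangle=0$ for all $X,Y,Z\in\mathfrak{m}$, so the canonical structure is of type $\mathcal{S}_{1}\oplus\mathcal{S}_{2}$; and it is not traceless, since $\eta^{c}=-\big(\mathrm{tr}\,\pi_{*}(\mathrm{d}/\mathrm{d} t)\big)\,\mathrm{d} t\neq0$ by (iii)(1), hence not of type $\mathcal{S}_{2}$.

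The computations are elementary. The two points needing care are the identification of $\xi$ with a multiple of $\mathrm{d}/\mathrm{d} t$, so that $\mathbb{R}\,\mathrm{d}/\mathrm{d} t\oplus D$ is the correct orthogonal splitting of $\mathfrak{m}$, and the bookkeeping of the $\mathfrak{k}$-component of $[\mathrm{d}/\mathrm{d} t,X]$, which disappears on pairing with vectors of $D$; this is exactly why (iii)(3) involves the $D$-part of $\pi_{*}(\mathrm{d}/\mathrm{d} t)$ rather than $\pi_{*}(\mathrm{d}/\mathrm{d} t)$ itself. The main obstacle, modest as it is, I expect to be on the converse side: confirming that hypotheses (i)--(iii) are precisely what is needed for $\mathbb{R}\ltimes_{\pi}(L/K)$ to be a well-defined semidirect Riemannian product and for the resulting space to be complete and simply-connected.
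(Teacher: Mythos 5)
Your proposal is correct and follows exactly the route the paper intends: the corollary is stated there as a direct consequence of Theorem \ref{nt2t3}, and your contribution is just to fill in the (correct) computation that, on $\mathfrak{m}=\mathbb{R}\,\mathrm{d}/\mathrm{d} t\oplus D$, the cyclic condition splits into condition (ii) for arguments in $D$ and condition (iii)(3) for the mixed case, with the remaining cases vanishing automatically. The identification of "type $\mathcal{S}_{1}\oplus\mathcal{S}_{2}$ but not $\mathcal{S}_{2}$" with "cyclic with non-vanishing trace form," and the use of Lemma \ref{lm1m22} for the converse, match the paper's framework, so there is nothing to correct.
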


\section{Curvatures of cyclic homogeneous Riemannian manifolds}
\label{secsix}

Under the identification $\mathfrak{m} \cong T_{o}M$, the curvature $R^{g}$ at the origin $o$ of an arbitrary
homogeneous Riemannian manifold $(M = G/K,g)$ satisfies (see \cite{Bes})
\begin{equation}
\label{curvatura}
\begin{split}
\langle R^{g}_{XY}X,Y\rangle & = - \dfrac{3}{4} \|[X,Y]_{\mathfrak{m}}\|^{2} -  \dfrac{1}{2} \langle [X,[X,Y]]_{\mathfrak{m}},Y\rangle
- \dfrac{1}{2} \langle [Y,[Y,X]]_{\mathfrak{m}},X\rangle\\ \noalign{\smallskip}
& \quad\, +\| \mathrm{U} (X,Y)\|^{2}-\langle \mathrm{U} (X,X), \mathrm{U} (Y,Y)\rangle, \qquad X,Y\in \mathfrak{m}.
\end{split}
\end{equation}
\begin{proposition}
Let $(M = G/K,g)$ be a cyclic homogeneous Riemannian manifold with adapted homogeneous structure $S$
defined by a reductive decomposition $\mathfrak{g} = \mathfrak{k} \oplus \mathfrak{m}$. We have the following formula for the curvature,
\begin{equation}\label{RS}
\langle R^{g}_{XY}X,Y\rangle = \langle R^{c}_{XY}X,Y\rangle - \|[X,Y]_{\mathfrak{m}}\|^{2} + \langle S_{X}Y,S_{Y}X\rangle - \langle S_{X}X,S_{Y}Y\rangle, \; X,Y\in \mathfrak{m}.
\end{equation}
\end{proposition}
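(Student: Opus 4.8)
The plan is to start from the universal curvature identity \eqref{curvatura}, valid on any homogeneous Riemannian manifold, and to rewrite each of its five terms, by means of \eqref{TR}, \eqref{SS} and \eqref{UT}, so that only $\langle R^{c}_{XY}X,Y\rangle$, $\|[X,Y]_{\mathfrak{m}}\|^{2}$, $\langle S_{X}Y,S_{Y}X\rangle$ and $\langle S_{X}X,S_{Y}Y\rangle$ survive. Two of the substitutions are purely algebraic: from \eqref{UT} one has $\mathrm{U}(X,X)=-S_{X}X$, hence $\langle \mathrm{U}(X,X),\mathrm{U}(Y,Y)\rangle=\langle S_{X}X,S_{Y}Y\rangle$; and combining $\mathrm{U}(X,Y)=-\tfrac12(S_{X}Y+S_{Y}X)$ with $[X,Y]_{\mathfrak{m}}=S_{Y}X-S_{X}Y$ (read off from \eqref{TR} and \eqref{UT}) gives the identity $\|\mathrm{U}(X,Y)\|^{2}-\tfrac14\|[X,Y]_{\mathfrak{m}}\|^{2}=\langle S_{X}Y,S_{Y}X\rangle$, which I would record for use at the end.

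The core of the argument is the treatment of the two iterated-bracket terms $\langle[X,[X,Y]]_{\mathfrak{m}},Y\rangle$ and $\langle[Y,[Y,X]]_{\mathfrak{m}},X\rangle$. I would split $[X,Y]=[X,Y]_{\mathfrak{k}}+[X,Y]_{\mathfrak{m}}$ in each. For the $\mathfrak{k}$-part, antisymmetry of the bracket and $[\mathfrak{k},\mathfrak{m}]\subset\mathfrak{m}$ give $[X,[X,Y]_{\mathfrak{k}}]_{\mathfrak{m}}=-\mathrm{ad}_{[X,Y]_{\mathfrak{k}}}X$ and $[Y,[Y,X]_{\mathfrak{k}}]_{\mathfrak{m}}=\mathrm{ad}_{[X,Y]_{\mathfrak{k}}}Y$; pairing with $Y$, resp.\ $X$, and using that $\mathrm{ad}_{W}$ is skew-symmetric on $\mathfrak{m}$ for $W\in\mathfrak{k}$ (a consequence of the $\mathrm{Ad}(K)$-invariance of $\langle\cdot,\cdot\rangle$), both $\mathfrak{k}$-parts equal $-\langle\mathrm{ad}_{[X,Y]_{\mathfrak{k}}}X,Y\rangle=-\langle R^{c}_{XY}X,Y\rangle$ by \eqref{TR}, so the two factors $-\tfrac12$ in \eqref{curvatura} combine to contribute exactly $+\langle R^{c}_{XY}X,Y\rangle$. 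For the $\mathfrak{m}$-part, writing $W=[X,Y]_{\mathfrak{m}}$ and $\beta(A,B,C)=\langle[A,B]_{\mathfrak{m}},C\rangle$, the two contributions are $\beta(X,W,Y)$ and $\beta(W,Y,X)$; the cyclic hypothesis \eqref{conan} applied to the triple $(X,W,Y)$, together with $\beta(Y,X,W)=-\|W\|^{2}$, yields $\beta(X,W,Y)+\beta(W,Y,X)=\|[X,Y]_{\mathfrak{m}}\|^{2}$. This is the one place where cyclicity is genuinely used, and it is the step I expect to be the crux; everything else is tensor bookkeeping.

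Collecting, \eqref{curvatura} becomes $\langle R^{g}_{XY}X,Y\rangle=\langle R^{c}_{XY}X,Y\rangle-\tfrac54\|[X,Y]_{\mathfrak{m}}\|^{2}+\|\mathrm{U}(X,Y)\|^{2}-\langle S_{X}X,S_{Y}Y\rangle$, where the coefficient $-\tfrac54$ is the sum of the $-\tfrac34$ already present in \eqref{curvatura} and the $-\tfrac12$ coming from the $\mathfrak{m}$-part above. Feeding in the recorded identity in the form $\|\mathrm{U}(X,Y)\|^{2}=\tfrac14\|[X,Y]_{\mathfrak{m}}\|^{2}+\langle S_{X}Y,S_{Y}X\rangle$ turns $-\tfrac54\|[X,Y]_{\mathfrak{m}}\|^{2}+\|\mathrm{U}(X,Y)\|^{2}$ into $-\|[X,Y]_{\mathfrak{m}}\|^{2}+\langle S_{X}Y,S_{Y}X\rangle$, which is precisely \eqref{RS}. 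The whole computation is pointwise at the origin under the identification $\mathfrak{m}\cong T_{o}M$ and uses no completeness or topological hypothesis.
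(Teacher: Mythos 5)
Your proof is correct and takes essentially the same route as the paper: the paper also applies the cyclic condition \eqref{conan} to the second and third summands of \eqref{curvatura}, arriving at exactly your intermediate identity with the coefficient $-\tfrac54$ and the term $\langle[[X,Y]_{\mathfrak{k}},X],Y\rangle$, and then converts to \eqref{RS} via \eqref{TR} and \eqref{UT}. Your write-up merely spells out the bookkeeping that the paper leaves implicit.
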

\begin{proof} Using (\ref{conan}) in the second and third summands of (\ref{curvatura}), one gets
\[
\langle R^{g}_{XY}X,Y\rangle = - \dfrac{5}{4} \|[X,Y]_{\mathfrak{m}}\|^{2} + \big\langle[[X,Y]_{\mathfrak{k}},X],Y\big\rangle + \,\| \mathrm{U} (X,Y)\|^{2}-\langle \mathrm{U}(X,X), \mathrm{U} (Y,Y)\rangle.
\]
Hence, from (\ref{TR}) and (\ref{UT}), we obtain (\ref{RS}).
\end{proof}
\begin{corollary}
If $G$ is not unimodular then
\begin{equation}
\label{Rxi}
\langle R^{g}_{X\xi}X,\xi\rangle = -\|[X,\xi]_{\mathfrak{m}}\|^{2},\qquad X\in \mathfrak{m}.
\end{equation}
Hence, strictly negative sectional curvatures exist on cyclic homogeneous Riemannian manifolds with non-va\-nishing canonical trace form.
\end{corollary}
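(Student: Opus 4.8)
The plan is to specialize the general curvature identity \eqref{RS} to the pair $(X,\xi)$ and to check that every term on its right-hand side other than $-\|[X,\xi]_{\mathfrak{m}}\|^{2}$ vanishes. I would first record two structural facts. Since both $\eta^{c}$ and $\langle\cdot,\cdot\rangle$ are $\mathrm{Ad}(K)$-invariant, so is the dual vector $\xi$, and hence $[\mathfrak{k},\xi]=0$. Moreover, as in the proof of Theorem \ref{pclosed} one has $[\mathfrak{m},\mathfrak{m}]\subset\mathfrak{l}$, so the $\mathfrak{m}$-component of a bracket of two elements of $\mathfrak{m}$ always lies in $D=\mathfrak{l}\cap\mathfrak{m}=\{X\in\mathfrak{m}:\eta^{c}(X)=0\}$; consequently $\langle[Z,X]_{\mathfrak{m}},\xi\rangle=\eta^{c}([Z,X]_{\mathfrak{m}})=0$ for all $X,Z\in\mathfrak{m}$.

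The key step is to show that $S_{\xi}X=0$ for every $X\in\mathfrak{m}$. From \eqref{U} and the vanishing just noted, $2\langle\mathrm{U}(X,\xi),Z\rangle=\langle[Z,\xi]_{\mathfrak{m}},X\rangle$ for all $Z\in\mathfrak{m}$; and substituting $X,\xi,Z$ into the cyclic identity \eqref{conan}, using once more that $\langle[Z,X]_{\mathfrak{m}},\xi\rangle=0$, yields $\langle[Z,\xi]_{\mathfrak{m}},X\rangle=\langle[X,\xi]_{\mathfrak{m}},Z\rangle$. Hence $\mathrm{U}(X,\xi)=\frac12[X,\xi]_{\mathfrak{m}}$, and then \eqref{SS} and \eqref{TR} give $S_{\xi}X=\frac12 T^{c}_{\xi}X-\mathrm{U}(\xi,X)=\frac12[X,\xi]_{\mathfrak{m}}-\frac12[X,\xi]_{\mathfrak{m}}=0$. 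In particular $\langle S_{X}\xi,S_{\xi}X\rangle=\langle S_{X}X,S_{\xi}\xi\rangle=0$, so \eqref{RS} collapses to $\langle R^{g}_{X\xi}X,\xi\rangle=\langle R^{c}_{X\xi}X,\xi\rangle-\|[X,\xi]_{\mathfrak{m}}\|^{2}$. Finally $R^{c}_{X\xi}=\mathrm{ad}_{[X,\xi]_{\mathfrak{k}}}$ by \eqref{TR}, and since $\mathrm{ad}_{W}$ is skew-symmetric with respect to $\langle\cdot,\cdot\rangle$ for $W\in\mathfrak{k}$ while $[\mathfrak{k},\xi]=0$, we get $\langle R^{c}_{X\xi}X,\xi\rangle=-\langle X,[[X,\xi]_{\mathfrak{k}},\xi]\rangle=0$. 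This proves \eqref{Rxi}.

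For the closing assertion, pick $0\neq X\in D$; then $X$ and $\xi$ are nonzero and orthogonal, so they span a $2$-plane $\sigma$ whose sectional curvature equals $\langle R^{g}_{X\xi}X,\xi\rangle/(\|X\|^{2}\|\xi\|^{2})=-\|[X,\xi]_{\mathfrak{m}}\|^{2}/(c^{2}\|X\|^{2})\leqslant 0$, and this is strictly negative provided $[X,\xi]_{\mathfrak{m}}\neq 0$ for some such $X$. Such an $X$ must exist: by \eqref{c12}, $\mathrm{tr}\,\mathrm{ad}_{\xi}=-\eta^{c}(\xi)=-c^{2}\neq 0$; since $\mathfrak{g}=\mathbb{R}\xi\oplus\mathfrak{l}$ with both summands $\mathrm{ad}_{\xi}$-invariant and $\mathrm{ad}_{\xi}|_{\mathbb{R}\xi}=0$, and since $\mathfrak{k}\subset\ker(\mathrm{ad}_{\xi}|_{\mathfrak{l}})$ because $[\mathfrak{k},\xi]=0$, the trace of $\mathrm{ad}_{\xi}$ equals the trace of the induced endomorphism $X\mapsto[\xi,X]_{\mathfrak{m}}$ of $D\cong\mathfrak{l}/\mathfrak{k}$; were the latter zero, $\mathrm{ad}_{\xi}$ would be traceless, a contradiction.

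I expect the only non-routine point to be the identity $\mathrm{U}(X,\xi)=\frac12[X,\xi]_{\mathfrak{m}}$ (equivalently, that $S$ annihilates $\xi$ in its subscript slot), since this is exactly the place where cyclicity enters; after that, the collapse of \eqref{RS}, the vanishing of the canonical-curvature term, and the existence of a plane of strictly negative curvature are all straightforward consequences of \eqref{RS}, \eqref{c12}, and the structure of the unimodular kernel $\mathfrak{l}$ and its intersection $D$ with $\mathfrak{m}$.
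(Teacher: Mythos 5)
Your proof is correct and follows essentially the same route as the paper: both arguments hinge on showing $S_{\xi}=0$ (you via $\mathrm{U}(X,\xi)=\tfrac12[X,\xi]_{\mathfrak{m}}$, the paper directly from the cyclic identity for $S$ together with $\eta^{c}([\mathfrak{m},\mathfrak{m}]_{\mathfrak{m}})=0$), then specialize \eqref{RS} and kill the term $\langle R^{c}_{X\xi}X,\xi\rangle$ using the $\mathrm{Ad}(K)$-invariance of $\xi$, i.e.\ $[\mathfrak{k},\xi]=0$. Your final trace argument for the existence of $X$ with $[X,\xi]_{\mathfrak{m}}\neq 0$ is a reformulation of the paper's observation that otherwise $\eta^{c}\circ\mathrm{U}$, and hence $\eta^{c}$ itself, would vanish, so the difference is purely cosmetic.
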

\begin{proof} From (\ref{UT}) and (\ref{conan}), using that $\eta^{c}$ is closed, we have that $\langle S_{\xi}X,Y\rangle = \langle S_{Y}X-S_{X}Y,\xi\rangle = \eta^{c}([X,Y]_{\mathfrak{m}}) = 0$. Then, $S_{\xi} = 0$. Because $R^{c}_{XY}\xi = 0$, for all $X,Y\in \mathfrak{m}$, and since $\xi$ is $\mathrm{Ad}(K)$-invariant, the corollary follows applying (\ref{RS}).

For the last part of the statement, we use that there exists
$X \in \mathfrak{m}$ such that $[X,\xi]_\mathfrak{m} $ $\neq 0$. In fact, if $[X,\xi]_\mathfrak{m}$ were null for all $X \in \mathfrak{m}$, then
$\mathrm{U}$ would be identically zero, and so $\eta^{c}$ would vanish.
\end{proof}

From Proposition \ref{cfund}, the distribution $D $ is umbilical if and only if the second fundamental form $h$ can be expressed as
$h = -\big(\langle\cdot,\cdot\rangle_D /(n-1)\big)\xi$.
\begin{proposition}
\label{pumbilical}
The leaves of the canonical foliation of a cyclic homogeneous Riemannian manifold
are totally umbilical hypersurfaces if and only if the sectional
curvature $K(X,\xi)$, for all $X\in D $, is constant. Then, for $n = \dim M$, we have that
\begin{equation}
\label{Kxi}
K(X,\xi) = - \Big(\frac{c}{n-1}\Big)^{2}.
\end{equation}
\end{proposition}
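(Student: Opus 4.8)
The plan is to use the characterization of umbilicity of $D$ in terms of the second fundamental form $h$ computed in Proposition \ref{cfund}, namely $h(X,Y) = \frac{1}{c^{2}}\,\eta^{c}(\mathrm{U}(X,Y))\,\xi$, together with the curvature identity \eqref{Rxi}, which under the cyclic hypothesis gives $\langle R^{g}_{X\xi}X,\xi\rangle = -\|[X,\xi]_{\mathfrak{m}}\|^{2}$ for $X\in\mathfrak{m}$, and in particular for $X\in D$. First I would translate the umbilicity condition: by the remark preceding the proposition, $D$ is umbilical if and only if $h = -\big(\langle\cdot,\cdot\rangle_{D}/(n-1)\big)\xi$, equivalently $\frac{1}{c^{2}}\eta^{c}(\mathrm{U}(X,X)) = -\frac{1}{n-1}\langle X,X\rangle$ for all $X\in D$ (the symmetric bilinear form being determined by its diagonal).

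Next I would relate $\eta^{c}(\mathrm{U}(X,X))$ to the sectional curvature $K(X,\xi)$. Using \eqref{U}, $2\langle\mathrm{U}(X,X),\xi\rangle = 2\langle[\xi,X]_{\mathfrak{m}},X\rangle$, so $\langle\mathrm{U}(X,X),\xi\rangle = \langle[\xi,X]_{\mathfrak{m}},X\rangle$; on the other hand, since $\xi$ is the dual vector of $\eta^{c}$ and $D = \ker\eta^{c}$, we have $\eta^{c}(\mathrm{U}(X,X)) = \langle\mathrm{U}(X,X),\xi\rangle$. Thus $\eta^{c}(\mathrm{U}(X,X)) = \langle[\xi,X]_{\mathfrak{m}},X\rangle$. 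The point is now to extract $\|[X,\xi]_{\mathfrak{m}}\|^{2}$ from this. Since $S_{\xi} = 0$ (shown in the proof of the corollary to \eqref{RS}), \eqref{UT} gives $[\xi,X]_{\mathfrak{m}} = S_{\xi}X - S_{X}\xi = -S_{X}\xi$, hence $\langle[\xi,X]_{\mathfrak{m}},X\rangle = -\langle S_{X}\xi,X\rangle = \langle S_{X}X,\xi\rangle = -\langle\mathrm{U}(X,X),\xi\rangle$ by \eqref{UT} again, so in fact $\eta^{c}(\mathrm{U}(X,X)) = -\langle\mathrm{U}(X,X),\xi\rangle$; combined with the previous line this forces $\eta^{c}(\mathrm{U}(X,X)) = -\tfrac12\|[X,\xi]_{\mathfrak{m}}\|^{2}$ after using that $[\xi,X]_{\mathfrak{m}}=-S_X\xi$ and the skew-symmetry $\langle S_X\xi,X\rangle=-\langle S_X X,\xi\rangle$; I will need to be careful bookkeeping the signs here, since this is where a slip is most likely.

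With $\eta^{c}(\mathrm{U}(X,X)) = -\tfrac12\|[X,\xi]_{\mathfrak{m}}\|^{2}$ in hand and $K(X,\xi) = \langle R^{g}_{X\xi}X,\xi\rangle/(\|X\|^{2}\|\xi\|^{2}) = -\|[X,\xi]_{\mathfrak{m}}\|^{2}/(c^{2}\|X\|^{2})$ from \eqref{Rxi}, the umbilicity condition $\frac{1}{c^{2}}\eta^{c}(\mathrm{U}(X,X)) = -\frac{1}{n-1}\|X\|^{2}$ becomes $-\frac{1}{2c^{2}}\|[X,\xi]_{\mathfrak{m}}\|^{2} = -\frac{1}{n-1}\|X\|^{2}$, i.e. $\|[X,\xi]_{\mathfrak{m}}\|^{2}/(c^{2}\|X\|^{2})$ is the constant $\frac{2}{n-1}$ (up to the sign/factor I must pin down), which is exactly the statement that $K(X,\xi)$ is constant for all $X\in D$. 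Substituting that constant value back gives $K(X,\xi) = -\big(c/(n-1)\big)^{2}$, yielding \eqref{Kxi}. So the logical skeleton is: umbilicity $\iff$ $\eta^{c}(\mathrm{U}(X,X))$ is a fixed multiple of $\|X\|^{2}$ $\iff$ $\|[X,\xi]_{\mathfrak{m}}\|^{2}$ is a fixed multiple of $\|X\|^{2}$ $\iff$ $K(X,\xi)$ is constant, with the constant then computed by tracing: summing \eqref{Rxi} over an orthonormal basis $\{e_{i}\}$ of $D$ and comparing with $\xi = -\sum_{i}\mathrm{U}(e_{i},e_{i})$ and $\eta^{c}(\xi)=c^{2}$ identifies the constant. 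The main obstacle is purely the sign/normalization bookkeeping linking $h$, $\mathrm{U}$, $S$, and $[\,\cdot\,,\xi]_{\mathfrak{m}}$; once those are fixed the equivalence and the value \eqref{Kxi} drop out immediately.
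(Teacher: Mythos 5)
Your proposal has a genuine gap at its central step: the identity $\eta^{c}(\mathrm{U}(X,X)) = -\tfrac12\|[X,\xi]_{\mathfrak{m}}\|^{2}$ is false, and nothing in your computation actually produces it. Your first computation is correct: by \eqref{U} with $Z=\xi$, $\eta^{c}(\mathrm{U}(X,X)) = \langle \mathrm{U}(X,X),\xi\rangle = \langle[\xi,X]_{\mathfrak{m}},X\rangle$. Your second computation contains the sign slip you feared: by \eqref{TR}, $T^{c}_{\xi}X = -[\xi,X]_{\mathfrak{m}}$, so \eqref{UT} together with $S_{\xi}=0$ gives $[\xi,X]_{\mathfrak{m}} = S_{X}\xi$, not $-S_{X}\xi$; with the correct sign the chain $\langle S_{X}\xi,X\rangle = -\langle S_{X}X,\xi\rangle = \langle \mathrm{U}(X,X),\xi\rangle$ (using \eqref{SS}) merely reproduces the first computation and yields no new relation. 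Taken with your sign, the two computations would instead force $\eta^{c}(\mathrm{U}(X,X))=0$ on $D$, i.e.\ $h\equiv 0$, contradicting Proposition \ref{cfund}. In no version does the quantity $\|[X,\xi]_{\mathfrak{m}}\|^{2}$ appear: writing $A=\mathrm{ad}_{\xi}\vert_{D}$, umbilicity is the condition that $\langle AX,X\rangle$ be proportional to $\|X\|^{2}$ (equivalently, $A$ is a scalar operator, once $A$ is known to be selfadjoint), whereas constancy of $K(X,\xi)$ is, by \eqref{Rxi}, the condition that $\|AX\|^{2}=\langle A^{2}X,X\rangle$ be proportional to $\|X\|^{2}$ (i.e.\ $A^{2}$ is scalar). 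For a general selfadjoint $A$ these are different conditions, so they cannot be linked by a pointwise algebraic identity between $\langle AX,X\rangle$ and $\|AX\|^{2}$, which is what your proof needs. Your final normalization argument also does not rescue the converse: summing \eqref{Rxi} over an orthonormal basis of $D$ and using $\mathrm{tr}_{D}A=\pm c^{2}$ only gives a Cauchy--Schwarz inequality between $\sum_i\lambda_i$ and $\sum_i\lambda_i^{2}$ unless you already know all eigenvalues of $A$ coincide.

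The missing ingredient is precisely the cyclic-specific input you never invoke: by Corollary \ref{pnon-uni}\,(iii)\,(3), $A$ is selfadjoint on $D$, so \eqref{second} becomes $h(X,Y)=c^{-2}\langle [\xi,X]_{D},Y\rangle\xi$, and umbilicity is equivalent to $[\xi,X]_{\mathfrak{m}}=-(c^{2}/(n-1))X$ for all $X\in D$; then \eqref{Rxi} gives \eqref{Kxi} at once. For the converse one diagonalizes the selfadjoint operator $A$, notes from \eqref{Rxi} that constancy of $K(X,\xi)$ forces all eigenvalues to have the same value (after fixing the common value via the trace of $\mathrm{ad}_{\xi}$, i.e.\ $\eta^{c}(\xi)=c^{2}$), and concludes that $A$ is scalar, hence $D$ umbilical. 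This spectral step is where the equivalence actually lives, and it is absent from your proposal.
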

\begin{proof}
From (\ref{second}) and Corollary \ref{pnon-uni}, one has that $h(X,Y) = c^{-2}\langle [\xi,X]_{D},Y\rangle \xi$, for all $X,Y\in D $. Then $D $ is umbilical if and only if $[\xi,X]_{\mathfrak{m}} = -(c^{2}/(n-1))X$, for all $X\in D $. From (\ref{Rxi}), the sectional curvature $K(X,\xi)$ satisfies (\ref{Kxi}).

Conversely, according with Corollary \ref{pnon-uni}, the mapping $X\in D \mapsto [\xi,X]_{\mathfrak{m}}$ is a selfadjoint operator, so there exists an orthonormal basis $\{e_{i}\}$ of $D $ such that $[\xi,e_{i}]_{\mathfrak{m}}= \lambda_{i}e_{i}$, for some $(\lambda_{1},\dotsc ,\lambda_{n-1})\in \mathbb{R}^{n-1}\setminus \{0\}$. Then, using (\ref{Rxi}), we have $\lambda_{1} = \dotsb = \lambda_{n-1} = c^{2}/(n-1)$. Hence, $D $ must be umbilical.
\end{proof}

\begin{remark}{\rm Consider the $n$-dimensional Poincar\'e half-space $(H^{n}(r),g)$, where $H^{n} = \{(x_{1}$, $\dots ,x_{n})\in \mathbb{R}^{n} \;:\;x_{n}>0\}$, $r>0$ and the metric $g$, with constant curvature $-r^{-2}$, is given by $g = r^{2} x_n^{-2} \sum_{i=1}^{n}\mathrm{d} x^{2}_{i}$. From Proposition  \ref{pumbilical}, one gets the well-known result that the hyperplanes
$x_{n} = \lambda$, for $\lambda>0$, are totally umbilical (actually, totally geodesic \cite[vol.\ II, p.\ 271]{KobNom}).}
\end{remark}

Because $\mathrm{ad}_{X}(\mathfrak{k})\subset \mathfrak{m}$ for all $X\in \mathfrak{m}$, one gets $\mathrm{tr}\, \big(W \in \mathfrak{k}\mapsto (\mathrm{ad}_{X}\makebox[7pt]{\raisebox{1.5pt}{\tiny $\circ$}} \mathrm{ad}_{X}(W))_{\mid \mathfrak{k}}\big) = \sum_{i=1}^{n}\langle [X$, $[X,e_{i}]_{\mathfrak{k}}],e_{i}\rangle$, where $\{e_{i}\}$ is an orthonormal basis of $(\mathfrak{m},\langle\cdot,\cdot\rangle)$.
Hence, the Killing form $B$ of $\mathfrak{g}$ satisfies
\begin{equation}
\label{Bm}
B(X,X)  = \sum_{i=1}^{n}\big(\langle [X,[X,e_{i}]]_{\mathfrak{m}},e_{i}\rangle + \langle [X,[X,e_{i}]_{\mathfrak{k}}],e_{i}\rangle\big).
\end{equation}

\begin{proposition}
The Ricci curvature ${\rm Ric}^{g}$ of a cyclic homogeneous Riemannian ma\-nifold is given by
\begin{equation}
{\rm Ric}^{g}(X,X) = \eta^{c}\big(U(X,X)\big) - B(X,X)
- \mathrm{tr}\,\big(Z\mapsto R^{c}(X,Z)X\big).
\end{equation}
\end{proposition}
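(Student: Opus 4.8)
The plan is to evaluate $\mathrm{Ric}^{g}(X,X)$ as the metric trace, at the origin, of the sectional curvature formula \eqref{RS}, and then to reorganise the resulting sums using the cyclic identity \eqref{conan} and the Killing form identity \eqref{Bm}. Fix a reductive decomposition $\mathfrak{g}=\mathfrak{k}\oplus\mathfrak{m}$ adapted to the cyclic structure, choose $\langle\cdot,\cdot\rangle$ $\mathrm{Ad}(K)$-invariant with $\mathfrak{k}\perp\mathfrak{m}$, and let $\{e_{1},\dots,e_{n}\}$ be an orthonormal basis of $\mathfrak{m}\cong T_{o}M$. Writing $\mathrm{Ric}^{g}(X,X)=\sum_{i}\langle R^{g}_{Xe_{i}}X,e_{i}\rangle$ and substituting \eqref{RS}, one gets four sums. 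The first, $\sum_{i}\langle R^{c}_{Xe_{i}}X,e_{i}\rangle$, equals $\mathrm{tr}\,\big(Z\mapsto R^{c}(X,Z)X\big)$ directly from \eqref{TR}. For the fourth, $-\sum_{i}\langle S_{X}X,S_{e_{i}}e_{i}\rangle$, I would use \eqref{SS} and $T^{c}_{X}X=0$ to write $S_{X}X=-\mathrm{U}(X,X)$, together with $\sum_{i}\mathrm{U}(e_{i},e_{i})=-\xi$ — which follows from \eqref{U}, from $[\mathfrak{k},\mathfrak{m}]\subset\mathfrak{m}$, and from \eqref{c12} — to reduce it to $\langle\mathrm{U}(X,X),\xi\rangle=\eta^{c}\big(\mathrm{U}(X,X)\big)$. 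For the third, inserting $S_{X}e_{i}=-\tfrac12[X,e_{i}]_{\mathfrak{m}}-\mathrm{U}(X,e_{i})$ and $S_{e_{i}}X=\tfrac12[X,e_{i}]_{\mathfrak{m}}-\mathrm{U}(X,e_{i})$ makes the cross terms cancel, leaving $\sum_{i}\langle S_{X}e_{i},S_{e_{i}}X\rangle=-\tfrac14\sum_{i}\|[X,e_{i}]_{\mathfrak{m}}\|^{2}+\sum_{i}\|\mathrm{U}(X,e_{i})\|^{2}$.

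The heart of the argument — and the only place the cyclic hypothesis is genuinely used — is the computation of $\sum_{i}\|\mathrm{U}(X,e_{i})\|^{2}$. Introducing the endomorphism $L$ of $\mathfrak{m}$ defined by $LY=[X,Y]_{\mathfrak{m}}$, so that $\sum_{i}\|[X,e_{i}]_{\mathfrak{m}}\|^{2}=\mathrm{tr}(L^{*}L)$ and $\sum_{i}\langle[X,[X,e_{i}]_{\mathfrak{m}}],e_{i}\rangle=\mathrm{tr}(L^{2})$, formula \eqref{U} gives $2\langle\mathrm{U}(X,e_{i}),e_{k}\rangle=-\langle Le_{k},e_{i}\rangle+\langle[e_{k},e_{i}]_{\mathfrak{m}},X\rangle$, and \eqref{conan} applied to the triple $e_{k},e_{i},X$ rewrites the last term as $\langle Le_{i},e_{k}\rangle-\langle Le_{k},e_{i}\rangle$. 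Squaring, summing over $i$ and $k$, and expressing everything through $\mathrm{tr}(L^{*}L)$ and $\mathrm{tr}(L^{2})$ gives $\sum_{i}\|\mathrm{U}(X,e_{i})\|^{2}=\tfrac54\,\mathrm{tr}(L^{*}L)-\mathrm{tr}(L^{2})$. I expect this bookkeeping to be the main obstacle: one must keep careful track of the $\mathfrak{m}$-projections and apply \eqref{conan} only to triples for which it is available.

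Assembling the four pieces, the terms in $\mathrm{tr}(L^{*}L)$ cancel and one is left with
\[
\mathrm{Ric}^{g}(X,X)=\eta^{c}\big(\mathrm{U}(X,X)\big)+\mathrm{tr}\,\big(Z\mapsto R^{c}(X,Z)X\big)-\mathrm{tr}(L^{2}).
\]
It remains to identify $\mathrm{tr}(L^{2})$. From \eqref{Bm}, $B(X,X)=\sum_{i}\langle[X,[X,e_{i}]],e_{i}\rangle+\sum_{i}\langle[X,[X,e_{i}]_{\mathfrak{k}}],e_{i}\rangle$; since $\mathrm{ad}_{W}$ is skew-symmetric on $\mathfrak{m}$ for $W\in\mathfrak{k}$, one checks $\langle[X,[X,e_{i}]_{\mathfrak{k}}],e_{i}\rangle=-\langle R^{c}(X,e_{i})X,e_{i}\rangle$, so the second sum equals $-\mathrm{tr}\,\big(Z\mapsto R^{c}(X,Z)X\big)$; and splitting $[X,e_{i}]=[X,e_{i}]_{\mathfrak{k}}+[X,e_{i}]_{\mathfrak{m}}$ in the first sum gives $\sum_{i}\langle[X,[X,e_{i}]],e_{i}\rangle=\mathrm{tr}(L^{2})+\sum_{i}\langle[X,[X,e_{i}]_{\mathfrak{k}}],e_{i}\rangle$. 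Combining these yields $\mathrm{tr}(L^{2})=B(X,X)+2\,\mathrm{tr}\,\big(Z\mapsto R^{c}(X,Z)X\big)$, and substituting into the displayed identity produces exactly $\mathrm{Ric}^{g}(X,X)=\eta^{c}\big(\mathrm{U}(X,X)\big)-B(X,X)-\mathrm{tr}\,\big(Z\mapsto R^{c}(X,Z)X\big)$.
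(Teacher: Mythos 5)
Your argument is correct; I checked the two key identities it rests on, namely $\sum_{i}\|\mathrm{U}(X,e_{i})\|^{2}=\tfrac{5}{4}\,\mathrm{tr}(L^{*}L)-\mathrm{tr}(L^{2})$ (which indeed follows from $2\langle \mathrm{U}(X,e_{i}),e_{k}\rangle=\langle Le_{i},e_{k}\rangle-2\langle Le_{k},e_{i}\rangle$ after using the cyclic condition on the triple $e_{k},e_{i},X$) and $\mathrm{tr}(L^{2})=B(X,X)+2\,\mathrm{tr}\big(Z\mapsto R^{c}(X,Z)X\big)$, and they combine as you claim to give the stated formula. Your route is organized differently from the paper's: the paper starts from Besse's Ricci formula \eqref{RicHo}, in which the Killing-form term and the trace-form term $\langle[\xi,X]_{\mathfrak{m}},X\rangle$ are already built in, so cyclicity is applied only once, to the double sum $\sum_{i,j}\langle[e_{i},e_{j}]_{\mathfrak{m}},X\rangle^{2}$, and then \eqref{U}, \eqref{Bm} and \eqref{TR} finish the proof in a couple of lines; you instead take the trace of the curvature formula \eqref{RS} of the preceding proposition, which forces you to apply the cyclic hypothesis a second time (in the evaluation of $\sum_{i}\|\mathrm{U}(X,e_{i})\|^{2}$) and to re-derive the identity $\sum_{i}\mathrm{U}(e_{i},e_{i})=-\xi$ that the paper records in Section \ref{secfiv}. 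What your version buys is a proof that stays entirely inside results already established in the paper and makes visible exactly where the trace-form contribution $\eta^{c}(\mathrm{U}(X,X))$ comes from (the term $-\langle S_{X}X,S_{e_{i}}e_{i}\rangle$); what it costs is noticeably heavier bookkeeping. One cosmetic remark: the identity $\langle[X,[X,e_{i}]_{\mathfrak{k}}],e_{i}\rangle=-\langle R^{c}(X,e_{i})X,e_{i}\rangle$ does not need skew-symmetry of $\mathrm{ad}_{W}$ for $W\in\mathfrak{k}$; it is immediate from $R^{c}(X,e_{i})=\mathrm{ad}_{[X,e_{i}]_{\mathfrak{k}}}$ in \eqref{TR} together with the antisymmetry of the bracket.
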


\begin{proof} The Ricci curvature tensor of a homogeneous Riemannian manifold can be expressed at the
origin in terms of $B$ as (see \cite[Chapter  7]{Bes})
\begin{equation}
\label{RicHo}
{\rm Ric}^{g}(X,X) = - \dfrac{1}{2} \sum_{i}\|[X,e_{i}]_{\mathfrak{m}}\|^{2} -  \dfrac{1}{2} B(X,X)
+ \dfrac{1}{4} \sum_{i,j}\langle [e_{i},e_{j}]_{\mathfrak{m}},X\rangle^{2} +\langle [\xi,X]_{\mathfrak{m}},X\rangle.
\end{equation}
Using that the metric is cyclic, one has
\begin{align*}
 \textstyle{\sum}_{i,j}\langle [e_{i},e_{j}]_{\mathfrak{m}},X\rangle^{2} & =  \textstyle{\sum}_{i,j}\big( \langle [X,e_{i}]_{\mathfrak{m}},e_{j}\rangle
+ \langle [e_{j},X]_{\mathfrak{m}},e_{i}\rangle\big)^{2}\\ \noalign{\medskip}
 & = 2\textstyle{\sum}_{i}\big(\|[X,e_{i}]_{\mathfrak{m}}\|^{2} + \langle [[X,e_{i}]_{\mathfrak{m}},X]_{\mathfrak{m}},e_{i}\rangle\big).
\end{align*}
Substituting then this equality in (\ref{RicHo}), we have
 \[
{\rm Ric}^{g}(X,X) = -\dfrac{1}{2} B(X,X) + \dfrac{1}{2} \sum_{i=1}^{n}\big\langle [[X,e_{i}]_{\mathfrak{m}},X]_{\mathfrak{m}},e_{i}\big\rangle +
 \big\langle [\xi,X]_{\mathfrak{m}},X \big\rangle.
 \]
Applying now (\ref{U}) and (\ref{Bm}), and using (\ref{TR}), the result follows.
\end{proof}

Because $K$ is trivial for any cyclic metric Lie group and $\eta^{c}\makebox[7pt]{\raisebox{1.5pt}{\tiny $\circ$}} \mathrm{U} $ is a symmetric bilinear form, we have the following corollary.
\begin{corollary}
\label{Rictensor}
The Ricci curvature of a cyclic left-invariant metric on a Lie group is given by
$\mathrm{Ric}^{g} = \eta^{c}\makebox[7pt]{\raisebox{1.5pt}{\tiny $\circ$}} \mathrm{U} -B$.
\end{corollary}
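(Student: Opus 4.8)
The plan is to derive Corollary \ref{Rictensor} directly from the preceding Proposition by specializing to the case in which the isotropy is trivial, as the sentence preceding the statement already indicates. For a cyclic left-invariant metric on a Lie group $G$, the adapted quotient expression has $K = \{e\}$, so $\mathfrak{k} = 0$ and $\mathfrak{m} = \mathfrak{g}$. Consequently the canonical curvature $R^{c}(X,Y) = \mathrm{ad}_{[X,Y]_{\mathfrak{k}}}$ from \eqref{TR} vanishes identically, and in particular the term $\mathrm{tr}\,\big(Z\mapsto R^{c}(X,Z)X\big)$ in the Ricci formula of the Proposition is zero. What survives is
\[
{\rm Ric}^{g}(X,X) = \eta^{c}\big(\mathrm{U}(X,X)\big) - B(X,X),
\]
which is the diagonal form of the asserted identity $\mathrm{Ric}^{g} = \eta^{c}\makebox[7pt]{\raisebox{1.5pt}{\tiny $\circ$}}\mathrm{U} - B$.

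Next I would pass from the quadratic (diagonal) identity to the full bilinear one by polarization. Both $\mathrm{Ric}^{g}$ and $B$ are symmetric bilinear forms on $\mathfrak{g}$, and $\eta^{c}\makebox[7pt]{\raisebox{1.5pt}{\tiny $\circ$}}\mathrm{U}$ is symmetric as well because $\mathrm{U}\colon\mathfrak{g}\times\mathfrak{g}\to\mathfrak{g}$ is symmetric by \eqref{U} (this is precisely the observation flagged just before the statement). Hence a symmetric bilinear form is determined by its values on the diagonal, and the diagonal identity above upgrades verbatim to the stated equality of bilinear forms on all of $\mathfrak{g}$. Finally, since every tensor in sight is left-invariant, the pointwise identity at the identity element $e$ propagates to all of $G$ by left translation, giving $\mathrm{Ric}^{g} = \eta^{c}\makebox[7pt]{\raisebox{1.5pt}{\tiny $\circ$}}\mathrm{U} - B$ as tensor fields.

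There is essentially no obstacle here: the corollary is a genuine corollary, and the only points requiring a word of care are the vanishing of $R^{c}$ (immediate from $\mathfrak{k}=0$ in \eqref{TR}) and the symmetry of $\eta^{c}\makebox[7pt]{\raisebox{1.5pt}{\tiny $\circ$}}\mathrm{U}$ needed to justify polarization. One should also note, for completeness, that $B$ here denotes the Killing form of $\mathfrak{g}$ restricted to $\mathfrak{g}\cong\mathfrak{m}$, which under the identification $\mathfrak{m}=\mathfrak{g}$ reduces to the ordinary Killing form, so the expression in Corollary \ref{Rictensor} is consistent with \eqref{Bm} read with $\mathfrak{k}=0$.
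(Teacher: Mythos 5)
Your proposal is correct and follows exactly the route the paper intends: specializing the Ricci formula of the preceding proposition to trivial isotropy (so $R^{c}=0$ by \eqref{TR}) and then polarizing, which is precisely what the paper's remark ``Because $K$ is trivial for any cyclic metric Lie group and $\eta^{c}\circ \mathrm{U}$ is a symmetric bilinear form'' encapsulates. Your extra remarks on left-invariance and on $B$ under the identification $\mathfrak{m}=\mathfrak{g}$ are harmless elaborations of the same argument.
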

\begin{corollary}
There are not nonabelian unimodular Lie groups equipped with an Einstein cyclic left-invariant metric.
\end{corollary}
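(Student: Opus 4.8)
The plan is to reduce the statement to Corollary~\ref{Rictensor} and to the scalar curvature formula \eqref{RicHo}, together with the elementary fact that the Killing form of a nonzero Lie algebra is never positive definite. So let $G$ be a unimodular Lie group with a cyclic left-invariant metric $g$, regarded as the homogeneous Riemannian manifold $G/\{e\}$ with reductive decomposition $\mathfrak{k}=\{0\}$, $\mathfrak{m}=\mathfrak{g}$ and with $\langle\cdot,\cdot\rangle$ the inner product on $\mathfrak{g}$ induced by $g$.

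First I would use that, since $G$ is unimodular, Proposition~\ref{punimodular} gives $\eta^{c}=0$, so its dual vector $\xi$ vanishes and Corollary~\ref{Rictensor} yields $\mathrm{Ric}^{g}=-B$ at the origin, where $B$ is the Killing form of $\mathfrak{g}$. If moreover $g$ is Einstein, then $\mathrm{Ric}^{g}=c\,\langle\cdot,\cdot\rangle$ for some constant $c$, and hence $B=-c\,\langle\cdot,\cdot\rangle$ as bilinear forms on $\mathfrak{g}$.

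Next I would compute the scalar curvature in two ways. Specializing \eqref{RicHo} to this situation (so $[\cdot,\cdot]_{\mathfrak{m}}=[\cdot,\cdot]$ and $\xi=0$) and tracing over an orthonormal basis $\{e_{i}\}$ of $\mathfrak{g}$ gives $\mathrm{scal}=-\tfrac14\,A-\tfrac12\sum_{k}B(e_{k},e_{k})$, where $A=\sum_{i,j}\|[e_{i},e_{j}]\|^{2}\geqslant 0$. On the other hand, from $\mathrm{Ric}^{g}=c\,\langle\cdot,\cdot\rangle$ and $B=-c\,\langle\cdot,\cdot\rangle$ one has $\mathrm{scal}=cn$ and $\sum_{k}B(e_{k},e_{k})=-cn$ with $n=\dim G$. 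Comparing the two expressions leaves $cn=-\tfrac12\,A$, and therefore $c\leqslant 0$.

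Finally I would split into cases. If $c=0$, then $A=0$, forcing $[\mathfrak{g},\mathfrak{g}]=0$, so $G$ is abelian. If $c<0$, then $g$ would equal the positive multiple $(-c)^{-1}B$ of the Killing form of $\mathfrak{g}$, which is impossible because the Killing form of a nonzero Lie algebra is never positive definite (it is negative definite when $\mathfrak{g}$ is semisimple of compact type, and degenerate or indefinite otherwise), so it cannot define a Riemannian metric. In either case $G$ is abelian, contradicting the hypothesis. The only points requiring care are the correct specialization of \eqref{RicHo} (in particular the vanishing of $\xi$, which is exactly Proposition~\ref{punimodular} in the unimodular case) and the sign bookkeeping in the Einstein condition; the rest is routine.
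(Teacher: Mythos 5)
Your argument is correct, and it takes a genuinely different route from the paper's. Both proofs start from Corollary~\ref{Rictensor} to get $\mathrm{Ric}^{g}=-B$ in the unimodular case, but from there the paper argues that the Einstein condition forces $\langle\cdot,\cdot\rangle$ to be proportional to $B$, hence $G$ semisimple, splits $\mathfrak{g}$ into simple ideals each carrying a cyclic metric, invokes the classification result \cite[Theorem 4.4]{GadGonOub1} to conclude each ideal is $\mathfrak{sl}(2,\mathbb{R})$, and then contradicts the known Ricci signature $(-,-,+)$ of the cyclic metrics on $\widetilde{SL(2,\mathbb{R})}$. You instead stay entirely inside the formulas of this paper: tracing \eqref{RicHo} with trivial isotropy and $\xi=0$ gives $\mathrm{scal}=-\tfrac14\sum_{i,j}\|[e_{i},e_{j}]\|^{2}-\tfrac12\,\mathrm{tr}_{g}B$, which combined with $B=-c\,\langle\cdot,\cdot\rangle$ yields $cn=-\tfrac12\sum_{i,j}\|[e_{i},e_{j}]\|^{2}\leqslant 0$; the case $c<0$ is excluded because a Killing form is never positive definite, and $c=0$ forces all brackets to vanish. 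What each approach buys: the paper's proof is short modulo the external classification of semisimple cyclic metric Lie groups, while yours is elementary and self-contained, needing only a standard Lie-theoretic fact, and it explicitly disposes of the Ricci-flat case $c=0$, which the paper's phrase ``proportional to the Killing form, in particular $G$ is semisimple'' quietly passes over (if $c=0$ then $B=0$ and semisimplicity does not follow). Two small polish points: the parenthetical justification of the never-positive-definite fact can be replaced by the one-line argument that $\mathrm{ad}$-invariance makes each $\mathrm{ad}_{X}$ skew-adjoint with respect to $B$, whence $B(X,X)=\mathrm{tr}(\mathrm{ad}_{X}^{2})\leqslant 0$; and it is worth noting that your scalar-curvature identity uses only unimodularity, the cyclic hypothesis entering solely through Corollary~\ref{Rictensor}.
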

\begin{proof} From Corollary \ref{Rictensor}, the Ricci curvature of a cyclic left-invariant metric on a unimodular Lie
group $G$ satisfies $\mathrm{Ric} = -B$. So, if the metric is Einstein, the corresponding inner product $\langle\cdot,
\cdot\rangle$ on the Lie algebra $\mathfrak{g}$ of $G$ must be proportional to the Killing form and, in particular, $G$ is semisimple.  
Hence the decomposition of the Lie algebra $\mathfrak{g}$ of $G$ into its simple ideals $\mathfrak{g} = \mathfrak{g}_{1}\oplus \dots \oplus \mathfrak{g}_{r}$
is an orthogonal direct sum with respect to $\langle\cdot,\cdot\rangle$ and its restriction to each $\mathfrak{g}_{i}$ defines
again a cyclic left-invariant metric. From \cite[Theorem 4.4]{GadGonOub1}, each $\mathfrak{g}_{i}$, $i = 1,\dots ,r$, is isomorphic to
$\mathfrak{sl}(2,\mathbb{R})$. But this contradicts our assumption that the metric is Einstein, because the set of cyclic left-invariant
metrics on $\widetilde{SL(2,\mathbb{R})}$ form a two-parameter family with signature of the Ricci form $(-,-,+)$. So there is no
Einstein cyclic left-invariant metric on this unimodular Lie group.
\end{proof}

\section{Classification of the simply-connected cyclic homogeneous Riemannian manifolds of dimension $n\leqslant  4$}
\label{secsev}

Any two-dimensional homogeneous Riemannian manifold $(M,g)$ obviously has constant curvature and the torsion of any metric connection must be vectorial (see, for example, \cite[Theorem 3.1]{TriVan}). In the simply-connected case, $(M,g)$ admits a non-vanishing homogeneous structure or, equivalently, an AS-connection with non-va\-nishing torsion if and only if it is isometric to the hyperbolic plane,
considered as an orthogonal semidirect Lie group \cite[Theorem 4.3]{TriVan}.

In \cite{KowTri}, Kowalski and Tricerri gave the classification of simply-connected traceless cyclic homogeneous Riemannian manifolds
of dimension three and four and, in \cite{GadGonOub1}, the corresponding classification for cyclic metric Lie groups was given. Next, we complete the list of simply-connected cyclic homogeneous Riemannian manifolds. In it appears, for the corresponding dimensions, the metric solvable Lie group $G^{\,n}(\alpha_1,\dotsc,\alpha_{\,n-1})$, for $(\alpha_{1},\dotsc,\alpha_{n-1})\in \mathbb{R}^{n-1}\setminus \{(0,\dots ,0)\}$, of matrices \cite[Example 5.6]{GadGonOub1}
\[
  \left(
   \begin{array}{cccc}
\mathrm{e}^{\alpha_1 u} &  \cdots & 0 & x_1\\
\noalign{\smallskip}
\vdots & \ddots & \vdots & \vdots\\
\noalign{\smallskip}
0& \ldots & \mathrm{e}^{\alpha_{\,n-1} u} & x_{n-1} \\
\noalign{\smallskip}
0 &  \cdots & 0 & 1
\end{array}
\right)
\] 

\noindent equipped, taking the global coordinate system $(u,x_1,\dotsc,x_{n-1})$, with the left-invariant Riemannian metric $g = \mathrm{d} u^{2} + \sum_{i=1}^{n-1}\mathrm{e}^{-2\alpha_i u}\mathrm{d} x_i^2$. It is isometric to the orthogonal semidirect product $\mathbb{R}\ltimes_{\pi}\mathbb{R}^{n-1}$, where the action $\pi\colon\! \mathbb{R} \to {\rm Aut}(\mathbb{R}^{n-1})$ is given by $\pi(t) = {\rm diag}(e^{\alpha_{1}t},\dotsc ,e^{\alpha_{n-1}t})$. If $\alpha_1=\cdots=\alpha_{\,n-1}=\alpha\not=0$, the metric Lie group $G^{\,n}(\alpha_1,\dotsc$, $\alpha_{\,n-1})$
gives the solvable description of $H^{n}(r)$, where $ r = |\alpha|^{-1}$.

Sekigawa \cite{Seki} proved that any three-dimensional locally homogeneous Riemannian manifold is either locally symmetric or locally isometric to a metric Lie group. Next, we recall some basic facts about three-dimensional unimodular Lie groups. Let $G$ be a three-dimensional unimodular Lie group and let $g$ be a left-invariant metric on $G$. Then there exists an orthonormal basis $\{e_{1},e_{2},e_{3}\}$ of the Lie algebra $\mathfrak{g}$ of $G$ such that
\begin{equation}\label{bracuni}
[e_{2},e_{3}] = \lambda_{1}e_{1},\quad [e_{3},e_{1}] = \lambda_{2}e_{2},\quad [e_{1},e_{2}] = \lambda_{3}e_{3},
\end{equation}
where $\lambda_{1},\lambda_{2},\lambda_{3}$ are constants. Recalling \cite{Mil} and according to their signs, we have six kinds of Lie algebras with associated Lie groups listed in Table $1$.
\begin{table}[htb]
\label{tabunia}
\setlength\arraycolsep{4pt}
\[
\begin{array}{|l|l|}  \hline
\mbox{\rm Signs of}\;\lambda_{1},\lambda_{2},\lambda_{3}                 & \mbox{\rm Associated Lie groups}\\ \hline \hline
+,\;+,\;+         & SU(2)\;\mbox{\rm or}\;SO(3) \\ \hline
+,\;+,\;-         & SL(2,\mathbb{R})\;\mbox{\rm or}\; SO(1,2) \\ \hline
+,\;+,\;0        & E(2) \\ \hline
+,\;0,\;-        & E(1,1) \\ \hline
0,\;0,\; +        & \mbox{\rm Heisenberg group} \\ \hline
0,\;0,\;0        &   \mathbb{R}\oplus\mathbb{R}\oplus \mathbb{R} \\ \hline
\end{array}
\]
\caption{$3$-dimensional unimodular Lie groups}
\end{table}

In what follows, we assume that the signs of $\lambda_{1},\lambda_{2},\lambda_{3}$ are chosen as in Table $1$ and $\{e_{1},e_{2},e_{3}\}$ is an orthonormal basis of $\mathfrak{g}$ satisfying (\ref{bracuni}).
\begin{theorem}
\label{clastheothre} Let $(M,g)$ be a three-dimensional connected, simply-connected and complete Riemannian manifold. We have:

{\bf (A)} $(M,g)$ admits an AS-connection with non-vanishing traceless cyclic torsion $T$ $(i.e.$, $T\in {\mathcal T}_{2})$ if and only if it is isometric to one of the following unimodular metric Lie groups:

\smallskip

\indent \indent {\bf (A1)} The universal covering group $\widetilde{SL(2,\mathbb{R})}$ of $SL(2,\mathbb{R})$, equipped with a two-para\-meter family of left-invariant metrics such that $\lambda_{3} = -(\lambda_{1} + \lambda_{2})$ and $\lambda_{1}\neq \lambda_{2}$.

\smallskip

\indent \indent {\bf (A2)} The group $E(1,1)$ with the left-invariant metrics such that $\lambda_{1} = -\lambda_{3}$.

\smallskip

\indent \indent {\bf (A3)} The Lie groups $G:$ $\widetilde{SL(2,\mathbb{R})}$, $SU(2)$ and the Heisenberg
group $H_{3}$, equipped with the left-invariant metrics such that $\lambda_{i} = \lambda_{j}$ and $\lambda_{k}\neq 0$, for any permutation $(i,j,k)$ of $(1,2,3)$. Then $(SO(2)\ltimes_{\pi}G)/SO(2)$ is an adapted quotient expression, where the homomorphism $\pi \colon SO(2)\to \mathrm{Aut}(\mathfrak{g})$ is given by
\begin{equation}\label{pi}
\pi(e^{i\theta}) = 
\left(
\begin{array}{ccc}
\cos \theta & -\sin\theta & 0\\
\sin\theta & \cos\theta &  0\\
0 & 0 & 1
\end{array}
\right),  
\end{equation}
with respect to $\{e_{1},e_{2},e_{3}\}$, and ${\mathfrak s}{\mathfrak o}(2)\oplus_{\pi_{*}}\mathfrak{g} = \mathbb{R} A_{ij} \oplus \mathbb{R}\{e_{i},e_{j},e_{k}-(\lambda_{i} + (\lambda_{k}/2)A_{ij})\}$ is an adapted reductive decomposition, where $A_{ij}$ is the endomorphism of $\mathfrak{g}$ determined by
$A_{ij}e_{i} = e_{j}$, $A_{ij}e_{j} = -e_{i}$, $A_{ij}e_{k} = 0$.

\smallskip

{\bf (B)} $(M,g)$ admits an AS-connection with cyclic torsion $T$ and non-vanishing trace form $(i.e., T\in ({\mathcal T}_{1}\oplus {\mathcal T}_{2}) \setminus {\mathcal T}_{2})$ if and only if it is isometric to the metric Lie group $G^{\,3}(\alpha,\beta)$ with $\alpha + \beta \neq 0$.
\end{theorem}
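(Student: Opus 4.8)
The plan is to derive both parts from Proposition~\ref{prop42} and then run a dimension count on the isotropy group. By Proposition~\ref{prop42}, a connected, simply-connected, complete $(M,g)$ admits an AS-connection with non-vanishing torsion in $\mathcal{T}_2$ (resp.\ with cyclic torsion of non-zero trace form) if and only if it is a traceless cyclic (resp.\ cyclic, non-traceless) homogeneous Riemannian manifold carrying a non-trivial canonical torsion, so in each case the task is to classify the relevant homogeneous structures. Part (A) is essentially the Kowalski--Tricerri list \cite{KowTri} repackaged, and I would organise it as follows: fix an adapted quotient $M=G/K$ with $G$ unimodular and $\mathfrak{g}=\mathfrak{k}\oplus\mathfrak{m}$ cyclic. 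Since $G$ acts effectively, the isotropy representation $K\to O(\mathfrak{m})\cong O(3)$ is faithful, so $\dim K\in\{0,1,3\}$. If $\dim K=3$, equivariance of $[\,\cdot\,,\cdot\,]_{\mathfrak{m}}\colon\Lambda^{2}\mathfrak{m}\to\mathfrak{m}$ (with $\mathfrak{m}$ the standard $SO(3)$-module) forces the canonical torsion to be a multiple of the cross product, hence totally skew-symmetric; being also cyclic it must vanish, against the hypothesis. If $\dim K=0$, then $K$ is trivial (as $M$ and $G$ are simply-connected) and $M=G$ is a simply-connected unimodular $3$-dimensional Lie group; taking a Milnor basis (\ref{bracuni}) and observing that $(X,Y,Z)\mapsto\mathfrak{S}_{XYZ}\langle[X,Y]_{\mathfrak{m}},Z\rangle$ is skew in $(X,Y)$ and cyclic, hence totally skew and so proportional to the volume form, the cyclic condition collapses to $\lambda_{1}+\lambda_{2}+\lambda_{3}=0$; scanning Table~1 under this constraint the non-abelian possibilities are exactly $\widetilde{SL(2,\mathbb{R})}$ with $\lambda_{3}=-(\lambda_{1}+\lambda_{2})$ and $E(1,1)$ with $\lambda_{1}=-\lambda_{3}$, which give (A1) (on the stratum $\lambda_{1}\neq\lambda_{2}$, the equal-$\lambda$ metrics carrying extra symmetry and being recorded in (A3)) and (A2). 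Finally, for $\dim K=1$ one has $K\cong SO(2)$ and $G$ a $4$-dimensional unimodular Lie group; writing down the most general $\mathrm{Ad}(SO(2))$-equivariant bracket on $\mathfrak{g}=\mathfrak{so}(2)\oplus\mathfrak{m}$, imposing Jacobi, unimodularity and cyclicity, and passing to the tilted complement $\mathbb{R}\{e_{i},e_{j},e_{k}-(\lambda_{i}+(\lambda_{k}/2))A_{ij}\}$, one recovers the family (A3) with the stated $\pi$ and reductive decomposition.

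For part (B), Proposition~\ref{prop42} reduces the problem to classifying the $3$-dimensional simply-connected complete cyclic homogeneous Riemannian manifolds with non-zero canonical trace form, and Corollary~\ref{pnon-uni} (via Theorem~\ref{nt2t3}) identifies these with the semidirect Riemannian products $\mathbb{R}\ltimes_{\pi}(L/K)$ for which $L$ is simply-connected unimodular, $K$ is connected, $\dim(L/K)=2$, and $\pi_{*}(\mathrm{d}/\mathrm{d}t)$ is a derivation of $\mathfrak{l}$ with $\mathrm{tr}\,\pi_{*}(\mathrm{d}/\mathrm{d}t)\neq0$, $\mathfrak{k}\subseteq\mathrm{Ker}\,\pi_{*}(\mathrm{d}/\mathrm{d}t)$ and $\pi_{*}(\mathrm{d}/\mathrm{d}t)|_{D}$ self-adjoint; condition (ii) of Corollary~\ref{pnon-uni} is automatic here because $\dim D=2$. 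The isotropy $K$ embeds faithfully in $O(D)\cong O(2)$, so $K$ is trivial or $K\cong SO(2)$. If $K$ is trivial, $L$ is a simply-connected unimodular $2$-dimensional Lie group, hence $L=\mathbb{R}^{2}$. If $K\cong SO(2)$, then $L$ is a $3$-dimensional unimodular Lie group possessing a compact one-parameter subgroup and a derivation of non-zero trace; the semisimple algebras $\mathfrak{su}(2)$ and $\mathfrak{sl}(2,\mathbb{R})$ are excluded because all their derivations are inner, hence trace-free, and among the remaining unimodular groups of Table~1 only $E(2)$ contains a compact $SO(2)$, so $L=E(2)$ and $L/K=E(2)/SO(2)$ is the Euclidean plane. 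In either case $L/K$ is flat and $M$ is isometric to $\mathbb{R}\ltimes_{\pi}\mathbb{R}^{2}$ with $\pi_{*}(\mathrm{d}/\mathrm{d}t)$ a self-adjoint endomorphism of $\mathbb{R}^{2}$ (automatically a derivation, $\mathbb{R}^{2}$ being abelian) of non-zero trace; the spectral theorem supplies an orthonormal basis in which $\pi_{*}(\mathrm{d}/\mathrm{d}t)=\mathrm{diag}(\alpha,\beta)$ with $\alpha,\beta\in\mathbb{R}$ and $\alpha+\beta\neq0$, and then $\mathbb{R}\ltimes_{\pi}\mathbb{R}^{2}$ with the product metric is precisely $G^{3}(\alpha,\beta)$. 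Conversely, for $\alpha+\beta\neq0$ the Lie group $G^{3}(\alpha,\beta)$ is simply-connected and complete, and with the orthonormal frame $E_{0}=\partial_{u}$, $E_{i}=\mathrm{e}^{\alpha_{i}u}\partial_{x_{i}}$ one checks (see also \cite[Example~5.6]{GadGonOub1}) that $[E_{0},E_{i}]=\alpha_{i}E_{i}$ and $[E_{i},E_{j}]=0$, whence the left-invariant metric is cyclic with $\eta^{c}=-(\alpha+\beta)\,\mathrm{d}u\neq0$; its canonical connection is therefore an AS-connection with cyclic torsion of non-vanishing trace form, establishing the converse.

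The step I expect to be the main obstacle is the $\dim K=1$ case of part (A): producing the family (A3) means solving the structure equations of the $4$-dimensional group $SO(2)\ltimes_{\pi}G$, identifying the correct tilted reductive complement, and then verifying both exhaustiveness and non-redundancy of the presentations (A1)--(A3) up to isometry — precisely the delicate bookkeeping already carried out in \cite{KowTri}, to which one should ultimately refer. By contrast, part (B) is a rigidity statement that goes through cleanly once Corollary~\ref{pnon-uni} is in hand: it forces the slice $L/K$ to be a homogeneous surface, and the demand that its isometry group be unimodular while carrying a derivation of non-zero trace is restrictive enough to leave only the flat plane, after which the spectral theorem and the normalisation $\pi_{*}(\mathrm{d}/\mathrm{d}t)=\mathrm{diag}(\alpha,\beta)$ conclude. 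I would also invoke Sekigawa's theorem \cite{Seki} at the outset to locate the underlying Riemannian manifolds among metric Lie groups and locally symmetric spaces, while keeping in mind that the classification of the homogeneous structures themselves must still be done group-theoretically, as above.
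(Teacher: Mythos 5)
Your proposal is correct and follows the same skeleton as the paper: part (A) is, in both cases, ultimately delegated to Kowalski--Tricerri (your isotropy-dimension sketch $\dim K\in\{0,1,3\}$ is a reasonable outline of what that reference does, and your reading of the Milnor-basis cyclic condition $\lambda_1+\lambda_2+\lambda_3=0$ and of the tilted complement in (A3) agrees with the statement), and part (B) is reduced, exactly as in the paper, via Proposition~\ref{prop42} and Corollary~\ref{pnon-uni} to semidirect Riemannian products $\mathbb{R}\ltimes_{\pi}(L/K)$ with a two-dimensional base. Where you genuinely diverge is the step showing the base must be flat: the paper observes that the base is a two-dimensional simply-connected Riemannian symmetric space and then invokes Lemma~\ref{lsym}, a general statement that a symmetric pair admitting a derivation with non-zero trace, vanishing on $\mathfrak{k}$ and self-adjoint on $D$ must have flat sections --- a lemma deliberately stated in arbitrary dimension because it is reused in the four-dimensional classification (Theorem~\ref{clastheofour}, Case II). You instead argue ad hoc in dimension two: enumerate the possible isotropy ($K$ trivial or one-dimensional) and exclude the round sphere and hyperbolic plane because a semisimple $L$ has only inner, hence trace-free, derivations; this is the same mechanism as Lemma~\ref{lsym} specialised to $n=2$, so it buys brevity here at the cost of not being reusable later. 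One small inaccuracy in your case analysis: since Theorem~\ref{nt2t3} produces a \emph{simply-connected} unimodular $L$, the group $E(2)$ itself cannot occur; the relevant possibility is the rotation line $\mathbb{R}\subset\widetilde{E(2)}$ (so the dichotomy ``$K$ trivial or $K\cong SO(2)$'' is too rigid), but this does not affect the outcome, as that quotient is again the Euclidean plane and the resulting space is isometric to $G^{3}(\alpha,\alpha)$, consistent with the statement. Your converse check for $G^{3}(\alpha,\beta)$ matches the paper's setup.
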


\begin{proof} Since the case (A) has been proved in \cite[Theorem 2.1]{KowTri}, we shall only consider the case (B). From Corollary \ref{pnon-uni}, $(M,g)$ must be a semidirect Riemannian product $\mathbb{R}  \ltimes_{\pi}(L/K)$, where $L/K$ is a two-dimensional
simply-conn\-e\-c\-ted Riemannian symmetric space and $(L,K)$ is a symmetric pair (see \cite[Chapter IV, Proposition 3.6]{Hel}). From the next lemma, it follows that $M$ is the Lie group $G^{3}(\alpha,\beta)$, with $\alpha+\beta\neq 0$.
\end{proof}
\begin{lemma}\label{lsym} Let $(L,K)$ be a Riemannian symmetric pair, with associated involutive automorphism $\sigma$ of $L$, and let $\pi \colon \mathbb{R}\to \mathrm{Aut}(L)$ be a homomorphism such that the derivation $\pi_{*}(\mathrm{d}/\mathrm{d} t)$ of the Lie algebra $\mathfrak{l}$ of $L$ satisfies the following conditions:
\begin{enumerate}
\item[{\rm (1)}] $\mathrm{tr}\, \pi_{*}(\mathrm{d}/\mathrm{d} t)\neq 0;$ \smallskip
\item[{\rm (2)}] $\mathfrak{k}\subset \mathrm{Ker}\;\pi_{*}(\mathrm{d}/\mathrm{d} t);$ \smallskip
\item[{\rm (3)}] $\langle \pi_{*}(\mathrm{d}/\mathrm{d} t)X_{\mid D },Y\rangle  = \langle\pi_{*}(\mathrm{d}/\mathrm{d} t)Y_{\mid D },X\rangle$, for all $X,Y\in D$, where $D$ is the $(-1)$-eigen\-space of $\sigma_{*}$ and $\langle\cdot,\cdot\rangle$ is an inner product on $D$.
\end{enumerate}
Then the Riemannian symmetric space $(M = L/K,g)$, $g$ being the $L$-invariant metric determined by $\langle\cdot,\cdot\rangle$, has flat sections.
\end{lemma}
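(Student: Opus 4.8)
\emph{Plan of proof.} I would work entirely at the Lie-algebra level with the derivation $\phi:=\pi_{*}(\mathrm{d}/\mathrm{d}t)$ of $\mathfrak{l}$. Write the Cartan decomposition $\mathfrak{l}=\mathfrak{k}\oplus D$ determined by $\sigma_{*}$, so that $[\mathfrak{k},\mathfrak{k}]\subset\mathfrak{k}$, $[\mathfrak{k},D]\subset D$ and $[D,D]\subset\mathfrak{k}$, and fix an $\mathrm{Ad}(K)$-invariant inner product on $\mathfrak{l}$ extending $\langle\cdot,\cdot\rangle$ with $\mathfrak{k}\perp D$. Split $\phi|_{D}=\phi_{\mathfrak{k}}+\psi$, with $\phi_{\mathfrak{k}}(D)\subset\mathfrak{k}$ and $\psi(D)\subset D$; condition (3) is exactly that $\psi$ is self-adjoint. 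Using $\phi(\mathfrak{k})=0$ and the Leibniz rule applied to $[W,X]$ with $W\in\mathfrak{k}$, $X\in D$, comparison of the $\mathfrak{k}$- and $D$-parts of $\phi([W,X])=[W,\phi X]$ shows that $\psi$ commutes with $\mathrm{ad}_{W}|_{D}$ for every $W\in\mathfrak{k}$, i.e.\ $\psi$ is an equivariant endomorphism of the isotropy module $D$; applying the Leibniz rule to $[X,Y]\in\mathfrak{k}$ with $X,Y\in D$ and taking $\mathfrak{k}$-parts gives the relation $[\psi X,Y]+[X,\psi Y]=0$.

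Next I would bring in the de Rham decomposition of the symmetric space, written on $D$ as an orthogonal, $\mathfrak{k}$-invariant splitting $D=D_{0}\oplus D_{1}\oplus\cdots\oplus D_{r}$ in which $D_{0}$ is the flat (Euclidean) part, each $D_{i}$ ($i\geq 1$) is irreducible under $\mathfrak{k}$ with $[D_{i},D_{i}]\neq 0$, and $[D_{i},D_{j}]=0$ for $i\neq j$. Then $[D_{i},D_{i}]\subset\mathfrak{k}$ acts nontrivially on $D_{i}$ and, by the Jacobi identity, trivially on every $D_{j}$ with $j\neq i$, so the summands are pairwise non-isomorphic $\mathfrak{k}$-modules; hence the equivariant self-adjoint operator $\psi$ preserves each $D_{i}$. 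Being self-adjoint, $\psi$ has $\mathfrak{k}$-invariant eigenspaces, so on each irreducible $D_{i}$ ($i\geq 1$) it is a scalar $c_{i}$; then $[\psi X,Y]+[X,\psi Y]=2c_{i}[X,Y]=0$ together with $[D_{i},D_{i}]\neq 0$ forces $c_{i}=0$. Thus $\psi$ is supported on $D_{0}$.

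Finally, $\phi$ vanishes on $\mathfrak{k}$ and $\phi_{\mathfrak{k}}(D)\subset\mathfrak{k}\perp D$ contributes no diagonal terms, so $\mathrm{tr}\,\phi=\mathrm{tr}\,\psi=\mathrm{tr}(\psi|_{D_{0}})$; by condition (1) this is nonzero, hence $D_{0}\neq\{0\}$. Therefore $(M=L/K,g)$ carries a nontrivial flat (Euclidean) de Rham factor, i.e.\ it has flat sections. (In the two-dimensional case in which the lemma is invoked, a non-Euclidean irreducible factor would have dimension $\geq 2$, so $D=D_{0}$ and $M$ is flat, hence isometric to $\mathbb{R}^{2}$, which is what is needed to conclude $M\cong G^{3}(\alpha,\beta)$.)

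The delicate step is the middle one: verifying that $\psi$ cannot mix distinct irreducible factors and that a self-adjoint $\mathfrak{k}$-equivariant operator on a real irreducible module is scalar. This is precisely where it matters that we are inside a genuine symmetric pair, so that the de Rham summands are honest factors carrying their own isotropy and distinct factors commute; the derivation identities and the trace computation around it are routine once this is in place.
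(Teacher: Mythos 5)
Your argument is correct, but it takes a genuinely different (and heavier) route than the paper. The paper's proof is purely elementary: by (3) one diagonalizes the self-adjoint operator $\varphi X=\pi_{*}(\mathrm{d}/\mathrm{d}t)X_{\mid D}$ in an orthonormal basis $\{e_{i}\}$ of $D$ with eigenvalues $\lambda_{i}$, and the derivation property together with (2) (taking $\mathfrak{k}$-components, since $[D,D]\subset\mathfrak{k}$) gives $(\lambda_{i}+\lambda_{j})[e_{i},e_{j}]=0$; since $\sum_{i}\lambda_{i}\neq 0$ by (1), not all sums $\lambda_{i}+\lambda_{j}$ can vanish, so some pair satisfies $[e_{i_{0}},e_{j_{0}}]=0$, which is exactly a flat section. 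Your proof instead runs the same two derivation identities through the de Rham decomposition: the identity $[\psi X,Y]+[X,\psi Y]=0$, $\mathfrak{k}$-equivariance of $\psi$, and Schur force $\psi$ to vanish on every non-flat irreducible factor, so the nonzero trace must sit on the Euclidean factor $D_{0}$, yielding the stronger conclusion that $M$ has a nontrivial flat de Rham factor (which indeed implies flat sections, and is in effect what is used in Case II of the four-dimensional classification). What you pay for this is precisely the ``delicate step'' you flag: invariance of the de Rham summands under the full isotropy $\mathfrak{k}$, the vanishing of brackets between distinct summands (which needs the pair to be effective, or the argument rephrased in terms of the action on $D$), and the pairwise non-isomorphism of the summands as $\mathfrak{k}$-modules; these are standard facts for symmetric pairs but none of them is needed in the paper's four-line computation. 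Note also that your key identity, evaluated on an orthonormal eigenbasis of $\psi$, is literally the paper's relation $(\lambda_{i}+\lambda_{j})[e_{i},e_{j}]=0$, so your machinery contains the paper's argument as a special case; if you only need the lemma as stated, the eigenbasis argument is shorter and self-contained, while your version is the one to keep if you want the sharper structural statement about the Euclidean factor.
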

\begin{proof} From (3), we can consider an orthonormal basis $\{e_{1},\dots ,e_{n}\}$ of $D$, $\dim D = n$, of eigenvectors of the selfadjoint operator $\varphi$ given by $\varphi X =  \pi_{*}(\mathrm{d}/\mathrm{d} t)X_{\mid D }$, for all $X\in D$. Then, $\varphi e_{i} = \lambda_{i}e_{i}$, for some constants $\lambda_{i} \in \mathbb{R}$, $i= 1,\dots ,n$, which, using (1), satisfy $\sum_{i=1}^{n}\lambda_{i} \neq 0$. Because $\mathrm{d}/\mathrm{d} t$ acts as a derivation of $\mathfrak{l}$ via $\pi_{*}$, it follows from (2) that $(\lambda_{i} + \lambda_{j})[e_{i},e_{j}] = 0$,  $1\leqslant  i<j\leqslant  n$. This implies that there exist $i_{0},j_{0}\in \{1,\dots ,n\}$, $i_{0}\neq j_{0}$, such that $[e_{i_{0}},e_{j_{0}}] = 0$ or, equivalently, the sectional curvature of the section $\mathbb{R}\{e_{i_{0}},e_{j_{0}}\}$ vanishes.
\end{proof}
\begin{theorem}
\label{clastheofour} Let $(M,g)$ be a four-dimensional connected, simply-connected and complete Riemannian manifold. We have:

 {\bf (A)} $(M,g)$ admits an AS-connection with non-vanishing traceless cyclic torsion $T$ $($i.e., $T\in {\mathcal T}_{2})$ if and only if it is isometric to one of the following homogeneous Riemannian manifolds:

\smallskip

\indent \indent {\bf  (A1)} The metric Lie  group $G^4(\alpha,\beta,\gamma)$, such that $(\alpha,\beta,\gamma)\in\mathbb{R}^3\setminus\{(0,0,0)\}$ and $\alpha + \beta + \gamma =0$.

\smallskip

\indent \indent {\bf  (A2)} The Cartesian space $ \mathbb{R}^{4}(x,y,u,v)$ equipped with a Riemannian metric of the form
\begin{align*}
\quad g & =  (-x + \sqrt{x^{2} + y^{2} + 1})\mathrm{d} u^{2} + (x + \sqrt{x^{2} + y^{2} + 1})\mathrm{d} v^{2} -2y\,\mathrm{d} u\mathrm{d} v  \\ \noalign{\smallskip}
  & \quad\;\; + \lambda^{2}(1+x^{2} + y^{2})^{-1}((1+y^{2})\mathrm{d} x^{2} + (1+x^{2})\mathrm{d} y^{2}) - 2xy\,\mathrm{d} x\mathrm{d} y,
 \end{align*}
where $\lambda >0$ is a real parameter.

\smallskip

\indent \indent {\bf  (A3)} The Riemannian products $(M_{3},g')\times  \mathbb{R}$, where $(M_{3},g')$
is one of the spaces given in \emph{Theorem \ref{clastheothre} (A)}.

\smallskip

{\bf (B)} $(M,g)$ admits an AS-connection with cyclic torsion $T$ and non-vanishing trace form $(i.e., T\in ({\mathcal T}_{1}\oplus {\mathcal T}_{2})\setminus {\mathcal T}_{2})$ if and only if it is isometric to one of the following homogeneous Riemannian manifolds:

\smallskip

\indent \indent {\bf (B1)} The metric Lie  group $G^4(\alpha,\beta,\gamma)$, such that $(\alpha,\beta,\gamma)\in\mathbb{R}^3$ and $\alpha + \beta + \gamma \neq 0$.

\smallskip

\indent \indent {\bf (B2)} The orthogonal semidirect product $\mathbb{R}^2\ltimes_{\pi}\mathbb{R}^{2}$,
where $\pi\colon\mathbb{R}^2\rightarrow \mathrm{Aut}(\mathbb{R}^2)$ is given by $\pi(s,t) = \mathrm{diag}(\mathrm{e}^{\,\rho s+\lambda t}, \mathrm{e}^{\sigma s-\lambda t})$, $\rho+\sigma\not=0$, $\lambda>0$. This Lie group can be described as the group of matrices
\[
\left(
\begin{array}{ccc}
\mathrm{e}^{\,\rho u+\lambda v} & 0 & x \\
0 & \mathrm{e}^{\sigma u-\lambda v} & y \\
\noalign{\smallskip}
0 & 0 & 1
\end{array}
\right),  \qquad  \rho+\sigma\not=0, \quad \lambda>0,
  \]
  with the left-invariant metric $g = \mathrm{d} u^{2} + \mathrm{d} v^{2}+ \mathrm{e}^{-2(\rho u+\lambda v)}\mathrm{d} x^{2} +
\mathrm{e}^{-2(\sigma u-\lambda v)}\mathrm{d} y^2$.

\smallskip

\indent \indent {\bf (B3)} The orthogonal semidirect product $\mathbb{R}\ltimes_{\sigma}H_{3}$, where $H_3$ is the Heisenberg group equipped with any left-invariant metric and the homomorphism $\sigma\colon\mathbb{R}$ $\rightarrow \mathrm{Aut}({\mathfrak h}_{3})$ is given by $\sigma(t) = \mathrm{diag}(\mathrm{e}^{\,\alpha t}, \mathrm{e}^{\,\alpha t}, \mathrm{e}^{\,2\alpha t})$, $\alpha \in \!\mathbb{R}\!\setminus\{0\}$, with respect to the basis $\{e_{1},e_{2},e_{3}\}$ of the Lie algebra ${\mathfrak h}_{3}$ of $H_{3}$. An adapted quotient expression is the semidirect Riemannian product $\mathbb{R}\ltimes_{\tau} \big( (SO(2)\ltimes_{\pi}H_{3})/SO(2)\big),$ where $\pi\colon SO(2)\rightarrow \mathrm{Aut}({\mathfrak h}_{3})$ is given as in  \emph{(\ref{pi})}, and $\tau \colon\mathbb{R}\rightarrow
\mathrm{Aut}({\mathfrak s}{\mathfrak o}(2)\oplus {\mathfrak h}_{3})$ by
\begin{equation}\label{tau}
\tau(t) =
\left(
\begin{array}{cccc}
1 & 0 & 0 & (\lambda_{3}/2)(\mathrm{e}^{\,2\alpha t}-1)\\ \noalign{\smallskip}
0 & \mathrm{e}^{\,\alpha t} & 0 & 0\\ \noalign{\smallskip}
0 & 0 & \mathrm{e}^{\,\alpha t} & 0\\ \noalign{\smallskip}
0 & 0 & 0 & \mathrm{e}^{\,2\alpha t}
\end{array}
\right), 
\end{equation}
with respect to $\{A_{12},f_{1},f_{2},f_{3}\}$, where $f_{1} = e_{1}$, $f_{2} = e_{2}$, $f_{3} = e_{3}- (\lambda_3/2)A_{12}$. Then $\mathbb{R}\oplus_{\tau_{*}}({\mathfrak s}{\mathfrak o}(2)\oplus_{\pi_{*}}{\mathfrak h}_{3}) = \mathbb{R} A_{12} \oplus (\mathbb{R} \mathrm{d}/\mathrm{d} t \oplus \mathbb{R}\{f_{1},f_{2},f_{3}\})$ is an adapted reductive decomposition.

\smallskip

\indent \indent {\bf  (B4)} The Riemannian products $H^{2}(r_{1})\times S^{2}(r_{2})$ and $H^{2}(r_{1})\times H^{2}(r_{2})$, with $r_{1},r_{2}>0$. Adapted quotient expressions are
\[
\big(\mathbb{R}  \ltimes_{\pi}(\mathbb{R}  \times SO(3))\big)/SO(2),\quad \big(\mathbb{R}  \ltimes_{\pi}(\mathbb{R}  \times SO(1,2))\big)/SO(2),
\]
respectively, where $\pi(t) = \mathrm{diag}(\mathrm{e}^{\,\alpha t}, 1,1,1)$, $\alpha \in \mathbb{R}\setminus\{0\}$, with respect to the basis $\{\mathrm{d}/\mathrm{d} s, e_{1}$, $e_{2},e_{3}\}$ of $\mathbb{R}\oplus {\mathfrak s}{\mathfrak o}(3)$ or $\mathbb{R}\oplus {\mathfrak s}{\mathfrak o}(1,2)$. The decomposition $\mathbb{R} e_{1}\oplus (\mathbb{R} \mathrm{d}/\mathrm{d} t\oplus \mathbb{R} \mathrm{d}/\mathrm{d} s \oplus \mathbb{R}\{e_{2},e_{3}\})$ is an adapted reductive decomposition.
\end{theorem}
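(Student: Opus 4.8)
Part (A) is precisely the classification obtained by Kowalski and Tricerri \cite{KowTri}, so the work to be done concerns part (B), i.e.\ the non-unimodular situation.

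The plan for (B) is to feed the manifold into the structure theorem of Section \ref{secfiv}. If $(M,g)$ is complete, simply-connected and carries a cyclic AS-connection with $\eta\neq 0$, then Proposition \ref{prop42} makes it a cyclic homogeneous Riemannian manifold whose homogeneous structure is of type $\mathcal{S}_{1}\oplus\mathcal{S}_{2}$ but not $\mathcal{S}_{2}$, so by Corollary \ref{pnon-uni} it is isometric to $\mathbb{R}\ltimes_{\pi}(L/K)$ with $L$ simply-connected unimodular, $K$ connected, $D$ cyclic, and $\varphi:=\pi_{*}(\mathrm{d}/\mathrm{d} t)\in\mathrm{Der}(\mathfrak{l})$ satisfying $\mathrm{tr}\,\varphi\neq 0$, $\mathfrak{k}\subset\mathrm{Ker}\,\varphi$ and $\varphi|_{D}$ self-adjoint. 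Since $\dim M=4$, the base $N:=L/K$ is a $3$-dimensional simply-connected complete homogeneous Riemannian manifold; because $L$ is unimodular its canonical trace form vanishes (Proposition \ref{punimodular}), and since the $\mathrm{U}$-part of a homogeneous structure always has vanishing cyclic sum (expand \eqref{U}), the cyclic identity on $D$ says exactly that the homogeneous structure of $N$ attached to $\mathfrak{l}=\mathfrak{k}\oplus D$ is of type $\mathcal{S}_{2}$. So $N$ is traceless cyclic: either its canonical torsion is non-zero, and then $N$ is one of the spaces of Theorem \ref{clastheothre}(A), or it vanishes, and then $N$ is locally symmetric, hence (being $3$-dimensional, simply-connected and complete) isometric to $\mathbb{R}^{3}$, $S^{3}$, $H^{3}$, $S^{2}\times\mathbb{R}$ or $H^{2}\times\mathbb{R}$.

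The next and main step is to run through this finite list of possible bases $N$ and, for each, describe the derivations $\varphi$ allowed by the three conditions of Corollary \ref{pnon-uni}, reading off the resulting $M=\mathbb{R}\ltimes_{\varphi}N$. The decisive observation is that a trace can only be produced off the semisimple part of $\mathfrak{l}$: when $\mathfrak{l}$ is semisimple every derivation is inner and $\mathrm{tr}\,\varphi=0$, which discards $N=S^{3}$, $N=H^{3}$ and $N=\widetilde{SL(2,\mathbb{R})}$ of Theorem \ref{clastheothre}(A1); when $\mathfrak{l}\cong\mathbb{R}\oplus\mathfrak{s}$ with $\mathfrak{s}$ semisimple — which happens for the $(SO(2)\ltimes_{\pi}G)/SO(2)$ descriptions of Theorem \ref{clastheothre}(A3) with $G=SU(2)$ or $\widetilde{SL(2,\mathbb{R})}$, and for $S^{2}\times\mathbb{R}$, $H^{2}\times\mathbb{R}$ — the only trace comes from the scalar action on the $\mathbb{R}$-summand, and the conditions $\mathfrak{k}\subset\mathrm{Ker}\,\varphi$ and $\varphi|_{D}$ self-adjoint either force that scalar to be $0$ (eliminating the $SU(2)$ and $\widetilde{SL(2,\mathbb{R})}$ descriptions) or force $\varphi$ to act only on the flat factor, giving the Riemannian products $H^{2}(r_{1})\times S^{2}(r_{2})$ and $H^{2}(r_{1})\times H^{2}(r_{2})$ of (B4). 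The remaining bases are abelian or solvable: for $N=\mathbb{R}^{3}$ the self-adjoint derivation diagonalizes and produces $G^{4}(\alpha,\beta,\gamma)$ with $\alpha+\beta+\gamma\neq 0$, i.e.\ (B1); for $N=E(1,1)$ one computes the self-adjoint derivations of $\mathfrak{e}(1,1)$ and gets the double orthogonal semidirect product $\mathbb{R}^{2}\ltimes_{\pi}\mathbb{R}^{2}$ of (B2); and for $N=(SO(2)\ltimes_{\pi}H_{3})/SO(2)$ the admissible $\varphi$ are exactly those recorded by \eqref{tau}, giving (B3). For the converse one only needs to exhibit, for each of (B1)--(B4), the adapted reductive decomposition stated in the theorem and check directly that the cyclic identity on $D$ holds and $\mathrm{tr}\,\varphi\neq 0$, so that Corollary \ref{pnon-uni} applies; the passage from the abstract semidirect products to the coordinate formulas for $g$ in (A2) and (B2) is obtained by writing out the stated matrix groups and their left-invariant metrics.

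The hard part will be the bookkeeping needed to see that the list is simultaneously exhaustive and irredundant: one $4$-manifold can be presented through several non-unimodular transitive groups (for instance $\mathbb{R}^{3}$ embeds in both $\mathbb{R}^{3}\rtimes SO(2)$ and $\mathbb{R}^{3}\rtimes SO(3)$, so $H^{4}$ must be recognized inside the family $G^{4}(\alpha,\alpha,\alpha)$), so one must check both that enlarging $L$ never yields a manifold outside the list and that the isomorphism type of the unimodular kernel $\mathfrak{l}$, together with the eigenvalue data of $\varphi|_{D}$, keeps (B1)--(B4) mutually distinct. The case-by-case computation of $\mathrm{Der}(\mathfrak{e}(1,1))$ and of the self-adjoint derivations in the $\mathbb{R}\oplus\mathfrak{s}$ cases, and the explicit integration to coordinates, are otherwise routine linear algebra.
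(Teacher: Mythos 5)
Your plan is correct and, in its skeleton, is the same as the paper's: reduce via Proposition \ref{prop42} and Corollary \ref{pnon-uni} to a semidirect Riemannian product $\mathbb{R}\ltimes_{\pi}(L/K)$ over a three-dimensional traceless cyclic base, split according to whether the canonical torsion of $\mathfrak{l}=\mathfrak{k}\oplus D$ vanishes (symmetric pair) or not (the spaces of Theorem \ref{clastheothre}\,(A)), and then determine the admissible derivations $\pi_{*}(\mathrm{d}/\mathrm{d} t)$ case by case. You deviate in two places, both legitimately. First, the paper disposes of the trivial-isotropy situation in one stroke by quoting \cite[Theorem 6.2]{GadGonOub1}, which already says that (B1) and (B2) exhaust the four-dimensional simply-connected nonunimodular cyclic metric Lie groups, so its explicit analysis only concerns bases with nontrivial isotropy (Case I, the $(SO(2)\ltimes_{\pi}G)/SO(2)$ presentations of Theorem \ref{clastheothre}\,(A3)) and symmetric pairs (Case II); you instead re-derive (B1) and (B2) from the bases $\mathbb{R}^{3}$ and $E(1,1)$, which duplicates that earlier classification but is sound. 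Second, to eliminate $S^{3}$, $H^{3}$ and the semisimple or reductive bases you argue through derivation theory (inner derivations are traceless; for $\mathfrak{l}\cong\mathbb{R}\oplus\mathfrak{s}$ the condition $\mathfrak{k}\subset\mathrm{Ker}\,\varphi$ kills the central scalar in the (A3) presentations because there $\mathfrak{k}$ has a nonzero central component, while self-adjointness on $D$ kills the inner part in the $S^{2}\times\mathbb{R}$, $H^{2}\times\mathbb{R}$ cases), whereas the paper excludes the constant-curvature symmetric bases via Lemma \ref{lsym} (flat sections) and then runs the explicit matrix computation; your mechanism gives the same conclusions, and I checked it does work in the (A3) cases with $G=SU(2)$ or $\widetilde{SL(2,\mathbb{R})}$. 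What you label ``routine linear algebra''---the determination of the admissible derivations of $\mathfrak{so}(2)\oplus_{\pi_{*}}\mathfrak{h}_{3}$ leading to (\ref{tau}), and of $\mathbb{R}\oplus\mathfrak{so}(3)$, $\mathbb{R}\oplus\mathfrak{so}(1,2)$ leading to (B4)---is exactly the computational core the paper writes out (the matrix (\ref{matrixddt}) together with the brackets (\ref{e2e3pmcu})), so a complete write-up must still include it, as well as the exhaustiveness and irredundancy bookkeeping you rightly flag at the end.
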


\begin{proof} The case  (A) has been proved in \cite[Theorem 3.1]{KowTri}. Moreover, using \cite[Theorem 6.2]{GadGonOub1}, the Riemannian manifolds in (B1) and (B2) are all the four-dimensional
simply-conn\-e\-c\-ted nonunimodular cyclic metric Lie groups. Therefore, according to Corollary \ref{pnon-uni}, we only need to consider the following two cases:

\noindent {\sf Case I:} Semidirect Riemannian products $\mathbb{R}\ltimes_{\tau}((SO(2)\ltimes_{\pi}G)/SO(2))$, where $G$ is one of the three-dimensional unimodular Lie groups  in Theorem \ref{clastheothre} (A3) and the homomorphism $\pi$ is defined in (\ref{pi}).

Put $f_{i} = e_{i}$, $f_{j} = e_{j}$, $f_{k} = e_{k} -(\lambda_{i} + (\lambda_{k}/2))A_{ij}$. Then, ${\mathfrak s}{\mathfrak o}(2)\oplus_{\pi_{*}}\mathfrak{g} = \mathbb{R} A_{ij} \oplus \mathbb{R}\{f_{i},f_{j},f_{k}\}$ is a cyclic reductive decomposition adapted to the quotient $(SO(2)\ltimes_{\pi}G)/SO(2)$. Let $\tau$ denote any homomorphism $\tau \colon\mathbb{R}\rightarrow \mathrm{Aut}(SO(2)\ltimes_{\pi}G)$, $t\mapsto \tau(t)$, such that $\tau_{*}(\mathrm{d}/\mathrm{d} t)$ satisfies the conditions $(1)$, $(2)$ and $(3)$ in Corollary \ref{pnon-uni},(iii). Then the linear mapping $\tau_{*}(\mathrm{d}/\mathrm{d} t)\colon \mathbb{R}\to \mathrm{Der}({\mathfrak s}{\mathfrak o}(2)\oplus_{\pi_{*}}\mathfrak{g})$ can be expressed as the matrix
\begin{equation}
\label{matrixddt}
\tau_{*}\Big(\frac{\mathrm{d}}{\mathrm{d} t}\Big) = {\small \begin{pmatrix}
0 & \mu_1 & \mu_{2} & \mu_{3}\\ \noalign{\smallskip}
0 & \alpha_{1} & \beta & \gamma \\ \noalign{\smallskip}
0 & \beta & \alpha_{2} & \delta\\ \noalign{\smallskip}
0 & \gamma & \delta & \alpha_{3}
\end{pmatrix}},\;\;\;\; \alpha_{1} + \alpha_{2} + \alpha_{3} \neq 0,
\end{equation}
with respect to the basis $\{A_{ij},f_{i},f_{j},f_{k}\}$. Since $\tau_{*}(\mathrm{d} / \mathrm{d} t)$ is a derivation of ${\mathfrak s}{\mathfrak o}(2)\oplus_{\pi_{*}}\mathfrak{g}$ one gets: By using the brackets $[A_{ij},f_{i}]$ and $[A_{ij},f_{j}]$, that
$\mu_{1} = \mu_{2} = \beta = \delta = \gamma = 0,\quad \alpha_{1} = \alpha_{2}$; by using the bracket $[f_{i},f_{j}]$, that $\alpha_{3} = 2\lambda_{k}\alpha_{1}$,
$\mu_{3} = 2\lambda_{k}\alpha_{1}(\lambda_{i} + (\lambda_{k}/2));$ and by using $[f_{j},f_{k}]$, that $2\mu_{3} = \lambda_{k}\alpha_{3}$. Hence, $\lambda_i= 0$ and $\mu_{3} = \lambda_{k}^{2}\alpha_{1}$. This implies that $G$ must be the Heisenberg group equipped with any of its left-invariant metrics, determined by $\lambda_{1} = \lambda_{2} =0$ and $\lambda_{3}>0$. Moreover, $\tau$ takes the expression given in (\ref{tau}) for $\alpha=\alpha_{1}$.

Because $\tau_{*}(\mathrm{d}/\mathrm{d} t)= \mathrm{diag}(0,\alpha,\alpha,2\alpha)$, with respect to the basis $\{A_{12},e_{1},e_{2},e_{3}\}$, it follows that $\tau_{*}(\mathrm{d} / \mathrm{d} t)$ preserves ${\mathfrak h}_{3}$ and the homomorphism $\sigma\colon\mathbb{R}\rightarrow \mathrm{Aut}({\mathfrak h}_{3})$ defined in (B3) coincides with $\tau$ on the restriction to $\mathrm{Aut}({\mathfrak h}_{3})$. Hence, we have the isometric identifications of the semidirect Riemannian products 
\[
\mathbb{R}\ltimes_{\tau} \big((SO(2)\ltimes_{\pi}H_{3})/SO(2)\big) \cong \big(SO(2)\times(\mathbb{R}\ltimes_{\sigma}H_{3})/SO(2)\big) \cong \mathbb{R}\ltimes_{\sigma} H_{3}
\] 
in the case (B3).

\noindent {\sf Case II:} Semidirect Riemannian products $\mathbb{R}\ltimes_{\pi}(L/K)$, where $(L,K)$ is a symmetric pair. From Lemma \ref{lsym}, the only possible semidirect Riemannian products are: 
\begin{enumerate}
\item[{\rm (i)}] $\mathbb{R}  \ltimes_{\pi}\mathbb{R}  ^{3}$, \smallskip
\item[{\rm (ii)}] $\mathbb{R}  \ltimes_{\pi}(\mathbb{R}  \times S^{2}(r))\cong \big(\mathbb{R}  \ltimes_{\pi}(\mathbb{R}  \times SO(3))\big)/SO(2)$, \smallskip
\item[{\rm (iii)}] $\mathbb{R}  \ltimes_{\pi}(\mathbb{R}  \times H^{2}(r))\cong \big(\mathbb{R}  \ltimes_{\pi}(\mathbb{R}  \times SO(1,2))\big)/SO(2)$.
\end{enumerate}
The case  (i) corresponds with the case  (B1). In the cases (ii) and (iii), the Lie algebras $\mathfrak{g}_{+}$ and $\mathfrak{g}_{-}$ of $G_{+} = \mathbb{R}  \ltimes_{\pi}(\mathbb{R}  \times SO(3))$ and $G_{-} = \mathbb{R}  \ltimes_{\pi}(\mathbb{R}  \times SO(1,2))$, respectively, admit bases $\{u, e_{0} = \mathrm{d}/\mathrm{d} t ,f_{1} =\mathrm{d}/\mathrm{d} s,f_{2},f_{3}\}$, adapted to the reductive decomposition
$\mathfrak{g}_{\pm} = \mathfrak{so}(2)\oplus \mathfrak{m} = \mathbb{R} u  \oplus \big(\mathbb{R} e_{0} \oplus \mathbb{R} f_{1} \oplus \mathbb{R}\{f_{2},f_{3}\}\big )$, such that $\{e_{0},f_{1},f_{2},f_{3}\}$ is an orthonormal basis of $(\mathfrak{m},\langle\cdot,\cdot\rangle)$ and
\begin{equation}
\label{e2e3pmcu}
[f_{1},u] = [f_{1},f_{2}] = [f_{1},f_{3}] = 0,\quad [f_{2},f_{3}] = \pm cu, \quad [f_{3},u] = cf_{2}, \quad [u,f_{2}] = cf_{3},
\end{equation}
where $c = r^{-2}$. Moreover, using that $\pi_{*}(e_{0})$ satisfies the conditions (iii) in Corollary \ref{pnon-uni}, it can be expressed as a matrix of the form (\ref{matrixddt}) with respect to the basis $\{u,f_{1},f_{2},f_{3}\}$ of $\mathbb{R}\oplus {\mathfrak s}{\mathfrak o}(3)$ or $\mathbb{R}\oplus {\mathfrak s}{\mathfrak o}(1,2)$. Because $\pi_{*}(e_{0})$ must be a derivation, one gets by using the brackets in (\ref{e2e3pmcu}), that all the matrix entries are zero except $\alpha_{1}$. Then the corresponding homomorphism $\pi$ takes the form given in (B4), for $\alpha = \alpha_{1}$ and  $e_{1} = u$, $e_{2} = f_{2}$ and $e_{3} = f_{3}$. Hence, for $G_{+}$, and in similar way for $G_{-}$, the following isometric identifications hold:
\[
\mathbb{R}\ltimes_{\pi} \big((\mathbb{R} \times SO(3))/SO(2)\big) \cong (\mathbb{R}\ltimes_{\sigma}\mathbb{R})\times (SO(3)/SO(2)) \cong G^{2}(\alpha)\times S^{2}(r),
\]
where $\sigma$ is the homomorphism $\sigma\colon\mathbb{R}\rightarrow \mathrm{Aut}(\mathbb{R})$ given by $\sigma(t)(s) = s\mathrm{e}^{\alpha t}$. Then, putting $r_1= |\alpha|$ and $r_{2} = r$, we obtain the case (B4).
\end{proof}

\section{Cyclic homogeneous Riemannian manifolds with simple spectrum}
\label{seceig}

Let $M=G/K$ be a homogeneous manifold, with $G$ semisimple. Because the Lie algebra $\mathfrak{k}$ of $K$ is a nondegenerate subspace with respect to the Killing form $B$ of $\mathfrak{g}$---in fact, the restriction of $B$ to $\mathfrak{k}$ is negative definite---we can consider the $B$-orthogonal decomposition $\mathfrak{g} = \mathfrak{k} \oplus \mathfrak{m}$, which is clearly reductive.
\begin{definition}
{\rm $M = G/K$ is said to have} {\it simple spectrum} {\rm if $K$, acting by the adjoint map on the $B$-orthogonal complement $\mathfrak{m}$ of $\mathfrak{k}$, induces a $B$-orthogonal splitting $\mathfrak{m} = \mathfrak{m}_{1}\oplus \dots \oplus \mathfrak{m}_{r}$, where the subspaces $\mathfrak{m}_{1},\dots ,\mathfrak{m}_{r}$ are irreducible and pairwise inequivalent.
}
\end{definition}
From the irreducibility of each $\mathfrak{m}_{a}$, $a = 1,\dots ,r$, we can assume that the restriction $B_{\mathfrak{m}_{a}}$ of $B$ to $\mathfrak{m}_{a}\times\mathfrak{m}_{a}$ is negative definite if $a = 1,\dots ,\nu$ and positive definite if $a= \nu +1,\dots ,r$, for some $\nu\in \{1,\dots ,r\}$. (Note that if $G$ is compact, $\mathfrak{m}_{1},\dots,\mathfrak{m}_{r}$ are pairwise $B$-orthogonal and $\nu = r.)$ Then, any $G$-invariant metric on $M$ is determined by an $\mathrm{Ad}(K)$-invariant inner product $\langle\cdot,\cdot\rangle_{\lambda_{1},\dots,\lambda_{\nu};\lambda_{\nu + 1},\dots ,\lambda_{r}}$, where $\lambda_{1},\dots \lambda_{\nu}<0$ and $\lambda_{\nu +1},\dots ,\lambda_{r}>0$, on $\mathfrak{m}$ of the form
\begin{equation}\label{inner}
\langle\cdot,\cdot\rangle_{\lambda_{1},\dots,\lambda_{\nu};\lambda_{\nu + 1},\dots ,\lambda_{r}} = \sum_{a=1}^{r}\lambda_{a}B_{{\mathfrak{m}}_{a}}.
\end{equation}
 We denote by $g_{\lambda_{1},\dots,\lambda_{r}}$ this metric. When, moreover, $(M = G/K,g_{\lambda_{1},\dots ,\lambda_{r}})$ is cyclic with respect to $\mathfrak{g} = \mathfrak{k} \oplus \mathfrak{m}$, we say that it is a {\it cyclic homogeneous Riemannian manifold with simple spectrum}. We can choose a $B$-orthonormal basis $\{e_{i}^{a}\}$ adapted to the splitting $\mathfrak{m} = \mathfrak{m}_{1}\oplus \dots \oplus \mathfrak{m}_{r}$,  $1\leqslant i\leqslant n_{a}$, $n_{a} = \dim \mathfrak{m}_{a}$, which is $\langle\cdot,\cdot\rangle$-orthogonal. Put
 \[
[e^{a}_{i},e_{j}^{b}]_\mathfrak{m} = \sum_{1\leqslant c\leqslant r\atop 1\leqslant k\leqslant n_{c}}c_{i_{a}j_{b}}^{k_{c}}e_{k}^{c}.
\]

\begin{proposition}\label{psemisimple} A homogeneous Riemannian manifold with simple spectrum $(M = G/K,g$ $ = g_{\lambda_{1},\dots,\lambda_{r}})$ is {\rm (}traceless{\rm )} cyclic if and only if
\begin{equation}\label{ccc}
c_{i_{a}j_{b}}^{k_{c}}(\lambda_{a} + \lambda_{b} + \lambda_{c}) =  0,
\end{equation}
where $a,b,c = 1,\dots ,r$, $1\leqslant i\leqslant n_{a}$,
$1\leqslant j\leqslant n_{b}$, $1\leqslant k\leqslant n_{c}$. If $G$ is compact, $M$ admits a cyclic metric $g_{\lambda_{1},\dots ,\lambda_{r}}$ if and only if the pair $(G,K)$ is associated with some orthogonal symmetric Lie algebra or, equivalently, the canonical torsion associated to the $B$-orthogonal decomposition $\mathfrak{g} = \mathfrak{k} \oplus \mathfrak{m}$ vanishes.
\end{proposition}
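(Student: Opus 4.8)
The plan is to pass to the basis $\{e^{a}_{i}\}$ already fixed --- which is simultaneously $B$-orthonormal and $\langle\cdot,\cdot\rangle$-orthogonal --- rewrite the cyclic identity in terms of the structure constants $c_{i_{a}j_{b}}^{k_{c}}$, and then exploit the total skew-symmetry of the cubic tensor attached to the Killing form. First I would record the normalizations: setting $\epsilon_{a}=-1$ for $1\leqslant a\leqslant\nu$ and $\epsilon_{a}=+1$ for $\nu+1\leqslant a\leqslant r$ (so $\epsilon_{a}$ is the sign of $\lambda_{a}$), one has $B(e^{a}_{i},e^{b}_{j})=\epsilon_{a}\,\delta_{ab}\delta_{ij}$ and, since $\langle\cdot,\cdot\rangle=\sum_{c}\lambda_{c}B_{\mathfrak{m}_{c}}$, also $\langle e^{a}_{i},e^{b}_{j}\rangle=\epsilon_{a}\lambda_{a}\,\delta_{ab}\delta_{ij}=|\lambda_{a}|\,\delta_{ab}\delta_{ij}$. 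Introduce $\gamma_{i_{a}j_{b}k_{c}}:=B([e^{a}_{i},e^{b}_{j}],e^{c}_{k})$. Because $[\mathfrak{k},\mathfrak{m}]\subset\mathfrak{m}$ and $\mathfrak{k}$ is $B$-orthogonal to $\mathfrak{m}$, we have $\gamma_{i_{a}j_{b}k_{c}}=B([e^{a}_{i},e^{b}_{j}]_{\mathfrak{m}},e^{c}_{k})=\epsilon_{c}\,c_{i_{a}j_{b}}^{k_{c}}$; and since $B$ is $\mathrm{ad}$-invariant, $B([X,Y],Z)$ is invariant under cyclic permutations of its arguments (indeed totally skew-symmetric), so $\gamma_{j_{b}k_{c}i_{a}}=\gamma_{k_{c}i_{a}j_{b}}=\gamma_{i_{a}j_{b}k_{c}}$.

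Next I would test the cyclic identity \eqref{conan} on a triple of basis vectors, which suffices by multilinearity. From $\langle[e^{a}_{i},e^{b}_{j}]_{\mathfrak{m}},e^{c}_{k}\rangle=|\lambda_{c}|\,c_{i_{a}j_{b}}^{k_{c}}=\epsilon_{c}|\lambda_{c}|\,\gamma_{i_{a}j_{b}k_{c}}=\lambda_{c}\,\gamma_{i_{a}j_{b}k_{c}}$ and its two cyclic permutations, the sum $\langle[e^{a}_{i},e^{b}_{j}]_{\mathfrak{m}},e^{c}_{k}\rangle+\langle[e^{b}_{j},e^{c}_{k}]_{\mathfrak{m}},e^{a}_{i}\rangle+\langle[e^{c}_{k},e^{a}_{i}]_{\mathfrak{m}},e^{b}_{j}\rangle$ collapses --- thanks to the cyclic invariance of $\gamma$ --- to $(\lambda_{a}+\lambda_{b}+\lambda_{c})\,\gamma_{i_{a}j_{b}k_{c}}$. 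Hence $(M,g_{\lambda_{1},\dots,\lambda_{r}})$ is cyclic if and only if $(\lambda_{a}+\lambda_{b}+\lambda_{c})\,\gamma_{i_{a}j_{b}k_{c}}=0$ for all admissible indices, and since $\epsilon_{c}\neq0$ this is exactly \eqref{ccc}. The parenthetical ``traceless'' is legitimate because a semisimple $\mathfrak{g}=[\mathfrak{g},\mathfrak{g}]$ has every $\mathrm{ad}_{X}$ traceless, so $\eta^{c}=0$ by \eqref{c12} and here ``cyclic'' and ``traceless cyclic'' are synonymous.

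For the last statement, suppose $G$ is compact, so $B$ is negative definite; then $\nu=r$, every $\lambda_{a}<0$, and $\lambda_{a}+\lambda_{b}+\lambda_{c}<0$ for all $a,b,c$. Therefore \eqref{ccc} holds --- for one, equivalently for every, admissible choice of the $\lambda_{a}$ --- precisely when $c_{i_{a}j_{b}}^{k_{c}}=0$ for all indices, i.e.\ when $[\mathfrak{m},\mathfrak{m}]_{\mathfrak{m}}=0$, which by \eqref{TR} means exactly that $T^{c}$ vanishes. Finally, $[\mathfrak{m},\mathfrak{m}]_{\mathfrak{m}}=0$ together with $[\mathfrak{k},\mathfrak{k}]\subset\mathfrak{k}$ and $[\mathfrak{k},\mathfrak{m}]\subset\mathfrak{m}$ is equivalent to the map $\sigma$ that is $\mathrm{id}$ on $\mathfrak{k}$ and $-\mathrm{id}$ on $\mathfrak{m}$ being an involutive automorphism of $\mathfrak{g}$, i.e.\ to $(\mathfrak{g},\mathfrak{k})$ being a symmetric pair; since $\mathrm{Ad}(K)$ is compact, this is an orthogonal symmetric Lie algebra. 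Conversely, if $(G,K)$ comes from an orthogonal symmetric Lie algebra with involution $\sigma$, then the $(-1)$-eigenspace of $\sigma_{*}$ is $\mathrm{Ad}(K)$-invariant and $B$-orthogonal to $\mathfrak{k}$, hence coincides with $\mathfrak{m}$, so $[\mathfrak{m},\mathfrak{m}]\subset\mathfrak{k}$ and $T^{c}$ vanishes; this closes the circle of equivalences.

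I expect the only real obstacle to be the bookkeeping in the second paragraph --- turning the three summands of the cyclic sum into one multiple of $\gamma_{i_{a}j_{b}k_{c}}$ --- which succeeds only because two facts cooperate: the total skew-symmetry of the Killing-form cubic tensor $B([X,Y],Z)$, and the compatibility $\langle e^{a}_{i},e^{a}_{i}\rangle=\epsilon_{a}\lambda_{a}=|\lambda_{a}|$ relating the two inner products. Everything else is routine, together with the observation that $\lambda_{a}+\lambda_{b}+\lambda_{c}$ can vanish only if some $\lambda$'s are positive, which by the very construction of $\langle\cdot,\cdot\rangle$ happens only when $G$ is noncompact.
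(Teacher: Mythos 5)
Your argument is correct and follows essentially the same route as the paper: you introduce $B([e^a_i,e^b_j],e^c_k)=\varepsilon_c\,c^{k_c}_{i_a j_b}$, use the $\mathrm{ad}$-invariance (cyclic symmetry) of $B([X,Y],Z)$ together with $\langle[e^a_i,e^b_j]_{\mathfrak m},e^c_k\rangle=\lambda_c B([e^a_i,e^b_j],e^c_k)$ to reduce the cyclic condition to \eqref{ccc}, and in the compact case the negativity of all $\lambda_a$ forces the structure constants to vanish, giving $[\mathfrak m,\mathfrak m]\subset\mathfrak k$ and the involution $\sigma=\mathrm{id}_{\mathfrak k}\oplus(-\mathrm{id}_{\mathfrak m})$. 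The only (harmless) additions beyond the paper's proof are your explicit converse in the compact case and the remark that semisimplicity makes ``cyclic'' automatically ``traceless cyclic.''
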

\begin{proof} Put $\bar{c}^{k_{c}}_{i_{a}j_{b}} = B([e^{a}_{ i},e^{b}_{ j}],e^{c}_{ k})$. Then $\bar{c}^{k_{c}}_{i_{a}j_{b}} =\varepsilon_{c}c_{i_{a}j_{b}}^{k_{c}}$, where $\varepsilon_{c} = B(e^{c}_{ k},e^{c}_{ k})$, and, taking into account that $B$ is $\mathrm{Ad}(G)$-invariant, one gets $\bar{c}^{k_{c}}_{i_{a}j_{b}} = \bar{c}^{j_{b}}_{k_{c}i_{a}} = \bar{c}^{i_{a}}_{j_{b}k_{c}}$. Hence, using that
$\langle[e_{i}^{a},e_{j}^{b}]_{\mathfrak{m}},e_{k}^{c}\rangle = \lambda_{c}\bar{c}_{i_{a}j_{b}}^{k_{c}}$, it follows that $\langle\cdot,\cdot\rangle$ defined in (\ref{inner}), determines a cyclic $G$-invariant metric on $M$ if and only if $\bar{c}_{i_{a}j_{b}}^{k_{c}}(\lambda_{i_a} + \lambda_{j_b} + \lambda_{k_c}) =  0$. Then the first part of the proposition is proved. For the second one, if $G$ is compact, each $\lambda_{a}$ is negative and any constant $c_{i_{a}j_{b}}^{k_{c}}$ in (\ref{ccc}) must be zero. This means that $[\mathfrak{m},\mathfrak{m}]\subset \mathfrak{k}$. Hence, $(M,g)$ is locally symmetric and, moreover, the map $\sigma\colon \mathfrak{g}\to \mathfrak{g}$ given by $\sigma(U+X) = U -X$, for all $U\in \mathfrak{k}$ and $X\in \mathfrak{m}$, is an involutive automorphism. Because $\mathfrak{k}$ is the set of fixed points of $\sigma$, it follows that $(\mathfrak{g},\sigma)$ is an orthogonal symmetric Lie algebra and $(G,K)$ is associated with $(\mathfrak{g},\sigma)$.
\end{proof}

\begin{corollary}\label{cnocom} A simply-connected, nonsymmetric, cyclic homogeneous Riemannian manifold with simple spectrum is not compact, admits non-positive sectional curvatures and the linear isotropy representation is reducible.
\end{corollary}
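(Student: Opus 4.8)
The plan is to read off all three conclusions from Proposition~\ref{psemisimple} together with the sign pattern of the numbers $\lambda_a$, using Bonnet--Myers for the curvature part and the structure relation \eqref{ccc} for the other two. The underlying theme is that the simple-spectrum cyclicity condition \eqref{ccc} is simply incompatible with $G$ being compact as soon as $(M,g)$ fails to be symmetric.

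\emph{Non-compactness.} First I would observe that, $(M,g)$ being simply-connected and not symmetric, the $B$-orthogonal reductive decomposition has $[\mathfrak m,\mathfrak m]\not\subset\mathfrak k$, so at least one structure constant $c^{k_c}_{i_aj_b}$ is nonzero; then \eqref{ccc} forces $\lambda_a+\lambda_b+\lambda_c=0$, which cannot happen when $G$ is compact, since in that case $\nu=r$ and every $\lambda_a$ is negative. (Equivalently, one may quote verbatim the second part of Proposition~\ref{psemisimple}, to the effect that a compact simple-spectrum cyclic $G/K$ is symmetric.) Hence $G$ is non-compact semisimple; as $K$ is compact, $M=G/K$ is non-compact, for otherwise $G$, being the total space of a bundle with compact fibre $K$ over the compact base $G/K$, would be compact.

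\emph{Non-positive sectional curvatures.} Here I would not try to produce an explicit $2$-plane from the curvature formulas of Section~\ref{secsix} (the terms involving $\mathrm{U}$ make a direct computation awkward); instead I would use that a homogeneous $(M,g)$ is complete and that, if all its sectional curvatures were positive, then by homogeneity together with compactness of the Grassmannian of tangent $2$-planes the sectional curvature would be bounded below by some $k>0$, whence $\mathrm{Ric}^g\ge(n-1)k\,g$ and Bonnet--Myers would make $M$ compact, contradicting the previous step. Therefore some tangent $2$-plane has non-positive sectional curvature, which is exactly the assertion. I would also note that it cannot be upgraded to ``non-positively curved'': by Corollary~\ref{Rictensor} a cyclic left-invariant metric on $\widetilde{SL(2,\mathbb{R})}$ has $\mathrm{Ric}^g=-B$, of signature $(-,-,+)$, so positive sectional curvatures really do occur.

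\emph{Reducibility of the isotropy representation.} If $\mathfrak m$ were $\mathrm{Ad}(K)$-irreducible, the simple-spectrum splitting would have $r=1$, and \eqref{ccc} would read $3\lambda_1c^{k_1}_{i_1j_1}=0$ with $\lambda_1\neq0$, forcing $[\mathfrak m,\mathfrak m]\subset\mathfrak k$ and hence $M$ symmetric, a contradiction; so $r\ge2$ and $\mathfrak m_1$ is a proper nonzero invariant subspace. The main point demanding a little care --- and really the only subtle step --- is the equivalence used twice above, namely that for the $B$-orthogonal reductive decomposition of a semisimple $G/K$ one has $M$ symmetric if and only if $[\mathfrak m,\mathfrak m]\subset\mathfrak k$; the nontrivial direction rests on the fact that in a semisimple symmetric pair the Cartan decomposition is $B$-orthogonal, so that the $B$-orthogonal complement of $\mathfrak k$ is necessarily the $(-1)$-eigenspace of the symmetry. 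Everything else is immediate.
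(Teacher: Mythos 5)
Your proof is correct, and for the first two conclusions it takes essentially the paper's route: non-compactness is read off from Proposition~\ref{psemisimple} (with all $\lambda_a<0$ in the compact case, \eqref{ccc} kills every structure constant and forces the symmetric situation), and the existence of non-positive sectional curvatures is exactly the paper's appeal to the fact that a homogeneous Riemannian manifold of positive sectional curvature is compact, which you merely unwind explicitly via Bonnet--Myers. Where you genuinely diverge is the third conclusion: the paper obtains reducibility of the linear isotropy representation by quoting \cite[Proposition 7.46]{Bes} (a noncompact isotropy irreducible space is symmetric), whereas you argue directly that irreducibility would force $r=1$ in the simple-spectrum splitting, so \eqref{ccc} becomes $3\lambda_1 c^{k_1}_{i_1 j_1}=0$ with $\lambda_1\neq 0$, giving $[\mathfrak{m},\mathfrak{m}]\subset\mathfrak{k}$ and hence symmetry. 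Your version is more elementary and stays entirely inside the algebraic framework of Section~\ref{seceig}; the paper's citation buys brevity and a statement independent of the particular splitting. Two minor remarks on your commentary (neither affects the validity of the argument): the only implication you actually use is ``$[\mathfrak{m},\mathfrak{m}]\subset\mathfrak{k}$ implies symmetric,'' which is the easy direction (vanishing canonical torsion gives $S=0$, hence local symmetry, hence symmetry by simple-connectedness and completeness); the converse direction you single out as the subtle point is never needed, and as stated for an arbitrary presentation it can fail (a symmetric space may be written as $G/K$ with $[\mathfrak{m},\mathfrak{m}]\not\subset\mathfrak{k}$, e.g.\ a compact simple group with trivial isotropy). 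Likewise, your aside that the curvature conclusion cannot be strengthened invokes $\widetilde{SL(2,\mathbb{R})}$ with trivial isotropy, which does not have simple spectrum, so it does not actually bear on this class; being outside the proof, this is harmless.
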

\begin{proof} The result follows from Proposition \ref{psemisimple}, taking into account that a homogeneous Riemannian manifold with positive sectional curvature must be compact and that a noncompact isotropy irreducible space is symmetric \cite[Proposition 7.46]{Bes}.
\end{proof}

\section{Cyclic $3$-symmetric manifolds}
\label{secnin}

A homogeneous manifold $M = G/K$ is said to be $3$-{\em symmetric} (see \cite{Gra,WolGra} and also \cite{Gon,GonMar}) if there exists an
automorphism $\theta$ of $G$ of order $3$ such that $G_{o}^{\,\theta} \subseteq K\subseteq G^{\,\theta}$, where $G^{\,\theta} = \{x\in G\,:\, \theta(x) = x\}$ and
$G_{o}^{\,\theta}$ denotes its identity component. Then $M$ is of even dimension, say $\dim M = 2n$. Let ${\mathfrak g} = {\mathfrak k}\oplus {\mathfrak m}$ be the reductive decomposition where ${\mathfrak m} =  {\rm Ker}\;\phi$, $\phi$ being the endomorphism of $\mathfrak{g}$ given by $\phi = 1 + \theta + \theta^{2}$, and $\mathfrak{k}$ is ${\rm Im}\,\phi = \mathfrak{g}^{\theta}$. Here and in the sequel, $\theta$ and its differential on $\mathfrak{g}$ are denoted by the same letter $\theta$.

If moreover $G/K$ is equipped with a $G$-invariant metric $g$ such that the corresponding $\mathrm{Ad}(K)$-invariant inner product $\langle\cdot,\cdot\rangle$ on $\mathfrak{m}$ is $\theta$-invariant, the triple $(M = G/K,\theta,\langle\cdot,\cdot\rangle)$ is called a {\em Riemannian $3$-symmetric  space}.

The automorphism $J = (1/\sqrt{3}) (2\theta_{\mid\mathfrak{m}} + \mathrm{Id}_{\mid \mathfrak{m}})$ on $\mathfrak{m}$ is $\mathrm{Ad}(K)$-invariant and so it determines a $G$-invariant almost complex structure on $G/K$, known as the canonical almost complex structure.  From \cite[Proposition  4.1]{GonMar}, the minimal $U(n)$-connection $\nabla^{U(n)}$ of the canonical almost Hermitian structure $(J,g)$ coincides with the canonical connection $\nabla^{c}$ with respect to the reductive decomposition $\mathfrak{g} = \mathfrak{k} \oplus  \mathfrak{m}$ and the corresponding homogeneous structure $S$ is given by $S_{X}Y = -\frac{1}{2}J(\nabla_{X}J)Y$. Then, from \cite[Theorem 8.1]{TriVan}, $(J,g)$ is quasi K\"ahler, and it is nearly K\"ahler (resp.\ almost K\"ahler) if and only if $g$ is naturally reductive (resp.\ cyclic) with respect to $\mathfrak{g} = \mathfrak{k} \oplus  \mathfrak{m}$.

A {\it strict homogeneous nearly K\"ahler manifold\/} is a compact naturally reductive $3$-sym\-metric space $(M=G/K,g,J)$ equipped with its canonical complex structure $J$ (see \cite{GonMar}, \cite{Nagy}). $(M,g,J)$ is said to have {\em special algebraic torsion\/} if there exists a $\nabla^{U(n)}$-parallel orthogonal decomposition $TM = {\mathscr{V}}\oplus {\mathscr{H}}$ such that ${\mathscr{V}}$, ${\mathscr{H}}$ are stable by $J$, $S_{\mathscr{V}}{\mathscr{V}} = 0$ and $S_{\mathscr{H}}{\mathscr{H}}\subset {\mathscr{V}}$. Then ${\mathscr{V}}$ determines a $G$-invariant integrable distribution and the corresponding foliation becomes a transversally symmetric fibration. Nagy proved  in \cite{Nagy} that each fibre, endowed with the induced metric and almost complex structure, is a Hermitian symmetric space of compact type and, if $(M,g)$ is irreducible, the fibre and the base space are also irreducible (Riemannian symmetric) spaces.

Irreducible  strict nearly K\"ahler manifolds with special algebraic torsion are Riemannian $3$-symmetric spaces $(M = G/K, \theta, \langle\cdot,\cdot\rangle)$, where $\theta$ is an inner automorphism on $\mathfrak{g}$ of type $A_{3}II$ or $A_{3}III$ in the classification of inner automorphisms of order $3$ given in \cite[Tables 2 and 3]{GonMar}. The corresponding dual noncompact transversally symmetric manifolds $M^{*} = G^{*}/K$ are again $3$-symmetric spaces with inner automorphism of the same type. They are given in
Tables \ref{tabAII} and \ref{tabAIII}. Here, $G^{*}$ is not necessarily simply-connected and the action of $G^{*}$ is almost effective but not necessarily effective.

\begin{table}[htb]
\setlength\arraycolsep{4pt}
\[
\begin{array}{|l|l|l|}  \hline
G^{*}                 &    K                                            & \mbox{Conditions}\\ \hline \hline
SU(i,n-i)     & S(U(j)\times U(k)\times U(l))        & i,j,k,l\geqslant  1;\;\;j+k+l = n,\\
                      &                                                  & j+k=i;  \;\;\text{or}\;\; k+l=n-i \\ \hline
SO^*(2n)         &  U(n-1) \times SO(2)                        & n\geqslant  4 \\ \hline
SO_0(2(n-1), 2)  & U(n-1)\times SO(2))     & n\geqslant  4 \\ \hline
E_{6(-14)}      &  SO(8)\times SO(2)\times SO(2) &
  \\ \hline
\end{array}
\]
\caption{\footnotesize Noncompact irreducible $3$-symmetric spaces of type  $A_{3}II$}
\label{tabAII}
\end{table}

\begin{table}[htb]
\setlength\arraycolsep{4pt}
\[
\begin{array}{|l|l|l|}  \hline
G^{*}                 &    K                                            & \mbox{Conditions}\\ \hline \hline
SO_{o}(2i,2(n-i) +1)  & U(i)\times SO(2(n-i) + 1)                       & n>2,\;\, i>1 \\ \hline
Sp(i,n-i)             & U(i)\times Sp(n-i)                              & n\geqslant 2,\;\, i<n \\ \hline
Sp(i,n-i)             & U(n-i) \times Sp(i)                             & n\geqslant 2,\;\, i<n \\ \hline
SO_{o}(2i,2(n-i))     & U(i)\times SO(2(n-i))                           & n\geqslant 4,\;\, 2\leqslant  i \leqslant  n-2 \\ \hline
SO_{o}(2i,2(n-i))     & U(n-i)\times SO(2i))                            & n\geqslant 4,\;\, 2\leqslant  i \leqslant  n-2 \\ \hline
E_{6(2)}              & S(U(5)\times U(1))\times SU(2)                  & \\ \hline
E_{6(2)}              & SU(6) \times T^{1}                              & \\ \hline
E_{7(-5)}             & SO(12)\times SO(2)                              & \\ \hline
E_{7(-5)}             & SU(2)\times SO(10)\times SO(2)                  & \\ \hline
E_{7(7)}              & S(U(7)\times U(1))                              & \\ \hline
E_{8(-24)}            & E_{7}\times SO(2)                               & \\ \hline
E_{8(8)}              & SO(14)\times SO(2)                              & \\ \hline
F_{4(4)}              & Sp(3)\times SO(2)                               & \\ \hline
F_{4(-20)}            & SO(7)\times SO(2)                               & \\ \hline
G_{2(2)}              & U(2)                                            & \\ \hline
 \end{array}
\]
\caption{\footnotesize Noncompact irreducible $3$-symmetric spaces of type  $A_{3}III$}
\label{tabAIII}
\end{table}

Compact and noncompact irreducible $3$-symmetric spaces of type  $A_{3}II$ and $A_{3}III$ are homogeneous Riemannian manifolds with simple spectrum. If $\theta$ is of type  $A_{3}II$, then $\mathfrak{m} = \mathscr{V}_{1}\oplus \mathscr{V}_{2}\oplus
\mathscr{V}_{3}$ is a $B$-orthogonal decomposition into isotropy-irreducible $J$-stable subspaces $\mathscr{V}_{i}$, satisfying (\cite{GonMar})
\begin{equation}\label{V123}
[\mathscr{V}_{k},\mathscr{V}_{k}]\subset \mathfrak{k},\quad  k=1,2,3;\qquad [\mathscr{V}_{k},\mathscr{V}_{r}]\subset \mathscr{V}_{s},
\end{equation}
for any permutation $(k,r,s)$ of $(1,2,3)$. Hence, we get decompositions $\mathfrak{g} = \mathfrak{l}_{k}\oplus {\mathscr{H}}_{k}$, where ${\mathfrak l}_{k}= \mathfrak{k}\oplus {\mathscr{V}}_{k}$ and ${\mathscr{H}}_{k} = \mathscr{V}_{r}\oplus \mathscr{V}_{s}$ for each permutation $(k,r,s)$ of $(1,2,3)$, and noncompact dual algebras $\mathfrak{g}^{*}_{k}$ of $\mathfrak{g}$ given by $\mathfrak{g}^{*}_{k} = \mathfrak{k}\oplus \mathfrak{m}^{*}_{k}$, where $\mathfrak{m}^{*}_{k} = \mathscr{V}_{k}\oplus \mathrm{i} (\mathscr{V}_{r}\oplus \mathscr{V}_{s})$. The isotropy-irreducibility of each $\mathscr{V}_{k}$, $k=1,2,3$, implies that any $G^{*}_{k}$-invariant metric on $G^{*}_{k}/K$ is determined by an $\mathrm{Ad}(K)$-invariant inner pro\-duct on $\mathfrak{m}^{*}_{k}$ of the form
\[
\langle\cdot,\cdot\rangle_{\lambda_{k};\lambda_{r},\lambda_{s}} = \lambda_{k}B_{\mid \mathscr{V}_{k}\times \mathscr{V}_{k}} + \lambda_{r}B_{\mid \mathrm{i} \mathscr{V}_{r}\times \mathrm{i} \mathscr{V}_{r}} + \lambda_{s}B_{\mid \mathrm{i} \mathscr{V}_{s}\times \mathrm{i} \mathscr{V}_{s}},
\]
for some $\lambda_{k}<0$ and $\lambda_{r},\lambda_{s}>0$. By Proposition \ref{psemisimple} and (\ref{V123}), one gets that
$\langle\cdot,\cdot\rangle_{\lambda_{k};\lambda_{r},\lambda_{s}}$ determines a cyclic homogeneous Riemannian metric on $G^{*}_{k}/K$ if and only if $\lambda_{k} = -(\lambda_{r} + \lambda_{s})$.

\begin{theorem}
\label{thA3II}
The cyclic homogeneous Riemannian metrics on a noncompact irreducible $3$-symmetric space of  type  $A_{3}II$, $G^{*}_{k}/K$, $k = 1,2,3$,  with respect to $\mathfrak{g}^{*}_{k} = \mathfrak{k}\oplus \mathfrak{m}^{*}_{k}$, form a two-parameter family $g_{\lambda,\mu}$, $\lambda,\mu >0$, determined by the inner products $\langle\cdot,\cdot\rangle_{\lambda,\mu}$ on $\mathfrak{m}^{*}_{k}$ given by
\[
\langle\cdot,\cdot\rangle_{\lambda,\mu} = \langle\cdot,\cdot\rangle_{-(\lambda + \mu); \lambda,\mu} = -(\lambda + \mu)B_{\mid \mathscr{V}_{k}\times \mathscr{V}_{k}} + \lambda B_{\mid \mathrm{i} \mathscr{V}_{r}\times \mathrm{i} \mathscr{V}_{r}} + \mu B_{\mid \mathrm{i} \mathscr{V}_{s}\times \mathrm{i} \mathscr{V}_{s}}.
\]
The canonical almost Hermitian structure $(J,g_{\lambda,\mu})$ is non-K\"ahler almost K\"ahler.
\end{theorem}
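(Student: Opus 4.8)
The plan is to reduce the first assertion to Proposition~\ref{psemisimple} and the bracket relations~(\ref{V123}), and to obtain the last sentence from the characterisation of cyclic canonical structures recalled at the beginning of this section together with the non-integrability already present in the compact dual.

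First I would use, as recorded just before the statement, that $G^{*}_{k}/K$ has simple spectrum with $B$-orthogonal isotropy-irreducible summands $\mathscr{V}_{k}$, $\mathrm{i}\mathscr{V}_{r}$, $\mathrm{i}\mathscr{V}_{s}$ of $\mathfrak{m}^{*}_{k}$, on which $B$ is negative, positive and positive definite respectively, so that every $G^{*}_{k}$-invariant Riemannian metric is determined by an inner product $\langle\cdot,\cdot\rangle_{\lambda_{k};\lambda_{r},\lambda_{s}}$ with $\lambda_{k}<0$ and $\lambda_{r},\lambda_{s}>0$. Transporting~(\ref{V123}) to the noncompact real form $\mathfrak{g}^{*}_{k}=\mathfrak{k}\oplus\mathscr{V}_{k}\oplus\mathrm{i}\mathscr{V}_{r}\oplus\mathrm{i}\mathscr{V}_{s}$ via $[\mathrm{i}X,\mathrm{i}Y]=-[X,Y]$ and $[X,\mathrm{i}Y]=\mathrm{i}[X,Y]$ gives $[\mathscr{V}_{k},\mathscr{V}_{k}]$, $[\mathrm{i}\mathscr{V}_{r},\mathrm{i}\mathscr{V}_{r}]$, $[\mathrm{i}\mathscr{V}_{s},\mathrm{i}\mathscr{V}_{s}]\subset\mathfrak{k}$ together with $[\mathscr{V}_{k},\mathrm{i}\mathscr{V}_{r}]\subset\mathrm{i}\mathscr{V}_{s}$, $[\mathscr{V}_{k},\mathrm{i}\mathscr{V}_{s}]\subset\mathrm{i}\mathscr{V}_{r}$, $[\mathrm{i}\mathscr{V}_{r},\mathrm{i}\mathscr{V}_{s}]\subset\mathscr{V}_{k}$; so, relative to a $B$-orthonormal basis adapted to the splitting of $\mathfrak{m}^{*}_{k}$ into these three pieces, the only structure constants $c^{k_{c}}_{i_{a}j_{b}}$ that can be nonzero are those whose labels $a,b,c$ run over the three distinct pieces, and the cyclicity criterion~(\ref{ccc}) of Proposition~\ref{psemisimple} reduces, exactly as noted before the statement, to the single equation $\lambda_{k}+\lambda_{r}+\lambda_{s}=0$. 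Setting $\lambda=\lambda_{r}>0$ and $\mu=\lambda_{s}>0$, which forces $\lambda_{k}=-(\lambda+\mu)<0$ and hence makes positive-definiteness automatic, yields precisely the two-parameter family $g_{\lambda,\mu}$ with the inner product displayed in the statement.

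For the last sentence, the canonical pair $(J,g_{\lambda,\mu})$ is quasi-K\"ahler by \cite[Theorem~8.1]{TriVan}, and since $g_{\lambda,\mu}$ is cyclic with respect to $\mathfrak{g}^{*}_{k}=\mathfrak{k}\oplus\mathfrak{m}^{*}_{k}$ it is almost K\"ahler, by the equivalence recalled at the beginning of this section. To see it is not K\"ahler, suppose it were; then $\nabla^{g_{\lambda,\mu}}J=0$, so the homogeneous structure $S_{X}Y=-\frac{1}{2}J(\nabla^{g_{\lambda,\mu}}_{X}J)Y$ vanishes, hence $T^{c}=0$, i.e.\ $[\mathfrak{m}^{*}_{k},\mathfrak{m}^{*}_{k}]\subset\mathfrak{k}$; by the relations above this forces $[\mathscr{V}_{k},\mathrm{i}\mathscr{V}_{r}]=[\mathscr{V}_{k},\mathrm{i}\mathscr{V}_{s}]=[\mathrm{i}\mathscr{V}_{r},\mathrm{i}\mathscr{V}_{s}]=0$, hence (by the sign rules above) $[\mathscr{V}_{k},\mathscr{V}_{r}]=[\mathscr{V}_{k},\mathscr{V}_{s}]=[\mathscr{V}_{r},\mathscr{V}_{s}]=0$, and together with $[\mathscr{V}_{i},\mathscr{V}_{i}]\subset\mathfrak{k}$ this would give $[\mathfrak{m},\mathfrak{m}]\subset\mathfrak{k}$ on the compact dual $G/K$, contradicting that $G/K$ is a strict nearly K\"ahler (hence non-K\"ahler) $3$-symmetric space. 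The only input beyond Proposition~\ref{psemisimple} and the bracket rules is this last fact, that the compact $3$-symmetric dual is genuinely nearly K\"ahler; the sign bookkeeping in $\mathfrak{g}^{*}_{k}$ and the reduction of~(\ref{ccc}) are routine, so the actual write-up should be brief.
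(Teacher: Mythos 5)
Your proposal is correct and follows essentially the same route as the paper: the paper's (implicit) proof is exactly the reduction of the criterion of Proposition~\ref{psemisimple} via the bracket relations \eqref{V123} transported to $\mathfrak{g}^{*}_{k}$, giving $\lambda_{k}=-(\lambda_{r}+\lambda_{s})$, together with the quasi-K\"ahler/almost-K\"ahler equivalence from \cite[Theorem 8.1]{TriVan} recalled at the start of Section~\ref{secnin}. Your explicit contradiction argument for non-K\"ahlerianity (Kähler would force $T^{c}=0$, hence a symmetric compact dual, contradicting strict nearly K\"ahlerianity) just spells out what the paper leaves implicit, and it also supplies the nonvanishing of the mixed structure constants needed for the ``only if'' direction.
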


Next suppose $\theta$ of type  $A_{3}III$. Then $\mathfrak{m} = \mathscr{V}\oplus \mathscr{H}$ is an orthogonal decomposition into isotropy-irreducible $J$-stable subspaces, satisfying (\cite{GonMar})
\begin{equation}\label{VH}
[\mathscr{H}, \mathscr{H}]_\mathfrak{m} \subset \mathscr{V},\quad [\mathscr{V},\mathscr{V}] \subset \mathfrak{k},\quad [\mathscr{V},\mathscr{H}]\subset \mathscr{H}.
\end{equation}
Any $G^{*}$-invariant metric on $G^{*}/K$ is determined by an $\mathrm{Ad}(K)$-invariant inner pro\-duct on $\mathfrak{m}^{*} = \mathscr{V}\oplus\mathrm{i} \mathscr{H}$ of the form $\langle\cdot,\cdot\rangle_{\lambda;\mu} = -\lambda B_{\mid \mathscr{V}\times \mathscr{V}} + \mu B_{\mid \mathrm{i} \mathscr{H}\times \mathrm{i} \mathscr{H}}$, for some $\lambda,\mu >0$. From Proposition \ref{psemisimple} and (\ref{VH}), $\langle\cdot,\cdot\rangle_{\lambda;\mu}$ determines a cyclic homogeneous Riemannian metric on $G^{*}/K$ if and only if $\lambda =2\mu$. So we have the next result.
\begin{theorem}
\label{thA3III}
There exists a  cyclic homogeneous Riemannian metric $g$, unique up to homotheties, on a noncompact irreducible Riemannian $3$-symmetric space of  type  $A_{3}III$,
 $G^{*}/K$,  with respect to $\mathfrak{g}^{*} = \mathfrak{k}\oplus \mathfrak{m}^{*}$. It is determined by the inner product $\langle\cdot,\cdot\rangle$ on $\mathfrak{m}^{*}$ given by
\[
\langle\cdot,\cdot\rangle = \langle\cdot,\cdot\rangle_{-2;1}= -2B_{\mid \mathscr{V}\times \mathscr{V}} + B_{\mid \mathrm{i} \mathscr{H}\times \mathrm{i} \mathscr{H}}.
\]
The canonical almost Hermitian structure $(J,g)$ is non-K\"ahler almost K\"ahler.
\end{theorem}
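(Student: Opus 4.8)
The statement is in substance a repackaging of the computation sketched just above it, so the plan is to assemble three ingredients. First I would pin down the candidate metrics. Since $G^{*}/K$ has simple spectrum and, in the $A_{3}III$ case, $\mathfrak{m}^{*}=\mathscr{V}\oplus\mathrm{i}\mathscr{H}$ is exactly the splitting of $\mathfrak{m}^{*}$ into its two irreducible, pairwise inequivalent $\mathrm{Ad}(K)$-submodules, the argument behind \eqref{inner} shows that every $G^{*}$-invariant metric is $\langle\cdot,\cdot\rangle_{\lambda;\mu}=-\lambda B_{\mid\mathscr{V}\times\mathscr{V}}+\mu B_{\mid\mathrm{i}\mathscr{H}\times\mathrm{i}\mathscr{H}}$, with $\lambda,\mu>0$ forced because after passing to the noncompact dual $B$ is negative definite on the ``compact part'' $\mathscr{V}$ and positive definite on $\mathrm{i}\mathscr{H}$; in the notation of Section \ref{seceig} the simple-spectrum parameters are thus $\lambda_{\mathscr{V}}=-\lambda<0$ and $\lambda_{\mathrm{i}\mathscr{H}}=\mu>0$.

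Next I would apply Proposition \ref{psemisimple}. By \eqref{VH}, among the brackets on $\mathfrak{m}^{*}$ only $[\mathrm{i}\mathscr{H},\mathrm{i}\mathscr{H}]\subset\mathscr{V}$ and $[\mathscr{V},\mathrm{i}\mathscr{H}]\subset\mathrm{i}\mathscr{H}$ can have nonzero $\mathfrak{m}^{*}$-component (the bracket $[\mathscr{V},\mathscr{V}]\subset\mathfrak{k}$ contributes nothing to $\mathfrak{m}^{*}$), and for each of the corresponding index triples one has $\lambda_{a}+\lambda_{b}+\lambda_{c}=2\mu-\lambda$; hence the cyclicity condition \eqref{ccc} collapses to $(2\mu-\lambda)\,c^{k_{c}}_{i_{a}j_{b}}=0$ on those structure constants. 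Not all of these vanish, for otherwise $[\mathfrak{m}^{*},\mathfrak{m}^{*}]_{\mathfrak{m}^{*}}=0$ and $G^{*}/K$ would be a Riemannian symmetric space, contradicting the fact that a $3$-symmetric space of type $A_{3}III$ is not symmetric (its compact model being a strict, i.e.\ non-K\"ahler, nearly K\"ahler manifold). Therefore $\lambda=2\mu$ is both necessary and sufficient, the cyclic metric equals $\langle\cdot,\cdot\rangle_{-2\mu;\mu}$, and it is unique up to the rescaling $\mu\mapsto t\mu$, i.e.\ up to homothety; normalizing $\mu=1$ gives $\langle\cdot,\cdot\rangle=\langle\cdot,\cdot\rangle_{-2;1}$. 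The one point requiring care here is the sign of $B$ on the two summands of $\mathfrak{m}^{*}$ after dualization, so that the equation reads $2\mu-\lambda=0$ rather than $2\mu+\lambda=0$.

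Finally, for the almost Hermitian claim I would invoke the equivalence recalled just before the theorem: since $\nabla^{U(n)}=\nabla^{c}$ for the reductive decomposition $\mathfrak{g}^{*}=\mathfrak{k}\oplus\mathfrak{m}^{*}$ by \cite[Proposition 4.1]{GonMar} and $S_{X}Y=-\tfrac12 J(\nabla_{X}J)Y$, \cite[Theorem 8.1]{TriVan} gives that the (automatically quasi-K\"ahler) structure $(J,g)$ is almost K\"ahler precisely because $g$ is cyclic with respect to $\mathfrak{g}^{*}=\mathfrak{k}\oplus\mathfrak{m}^{*}$; and it fails to be K\"ahler, since K\"ahlerness would force $S=0$, hence $T^{c}=0$, hence $[\mathfrak{m}^{*},\mathfrak{m}^{*}]_{\mathfrak{m}^{*}}=0$, which we have just excluded. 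I do not anticipate a genuine obstacle: beyond the sign bookkeeping above, the proof only uses the non-symmetry of $A_{3}III$ $3$-symmetric spaces (which keeps the relevant structure constants from all vanishing) together with direct substitution into Proposition \ref{psemisimple} and the cited structure results from \cite{GonMar} and \cite{TriVan}.
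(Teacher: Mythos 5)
Your proposal is correct and follows essentially the same route as the paper, which derives the theorem from the simple-spectrum form of the invariant metrics, Proposition \ref{psemisimple} together with \eqref{VH} (yielding $\lambda=2\mu$), and the almost-K\"ahler characterization via $\nabla^{U(n)}=\nabla^{c}$ and \cite[Theorem 8.1]{TriVan}. Your explicit remarks on the sign bookkeeping, on the non-vanishing of the relevant structure constants (needed for the ``only if'' direction), and on non-K\"ahlerness via $S=0\Rightarrow T^{c}=0$ merely spell out what the paper leaves implicit.
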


\end{document}